\documentclass{amsart}
\usepackage[T1]{fontenc}
\usepackage{mathrsfs}
\usepackage{amsfonts}
\usepackage{amssymb, amsthm, amsmath}
\usepackage{amsxtra}
\usepackage{dsfont}
\usepackage[colorlinks,citecolor=blue,urlcolor=blue]{hyperref}

\numberwithin{equation}{section}
\usepackage[english,polish]{babel}

\newcommand{\pl}[1]{\foreignlanguage{polish}{#1}}

\newtheorem{theorem}{Theorem}

\newtheorem{proposition}{Proposition}[section]

\newtheorem{lemma}{Lemma}

\newtheorem*{loglemma}{Bourgain's lemma}

\newcounter{thm}

\newtheorem{main_theorem}[thm]{Theorem}

\newcommand{\RR}{\mathbb{R}}
\newcommand{\ZZ}{\mathbb{Z}}
\newcommand{\TT}{\mathbb{T}}
\newcommand{\CC}{\mathbb{C}}
\newcommand{\NN}{\mathbb{N}}
\newcommand{\QQ}{\mathbb{Q}}

\newcommand{\calP}{\mathcal{P}}
\newcommand{\calQ}{\mathcal{Q}}
\newcommand{\calF}{\mathcal{F}}

\newcommand{\calI}{\mathcal{I}}
\newcommand{\calW}{\mathcal{W}}

\newcommand{\var}[1]{V_r{#1}}

\newcommand{\bvar}[1]{\mathcal{V}_r{#1}}

\newcommand{\seq}[2]{{#1}: {#2}}

\newcommand{\ind}[1]{{\mathds{1}_{{#1}}}}

\newcommand{\lcm}{\operatorname{lcm}}
\newcommand{\tr}{\operatorname{tr}}

\renewcommand{\atop}[2]{\substack{{#1}\\{#2}}}
\newcommand{\norm}[1]{{\left\lvert #1 \right\rvert}}
\newcommand{\sprod}[2] {{#1 \cdot #2}}

\newcommand{\abs}[1]{{\lvert {#1} \rvert}}
\newcommand{\sabs}[1]{{\left\lvert {#1} \right\rvert}}
\newcommand{\vnorm}[1]{{\left\lVert {#1} \right\rVert}}

\newcommand{\8}{\infty}

\newcommand{\vrho}{\varrho}

\setlength{\textwidth}{16 cm}
\setlength{\evensidemargin}{0cm}
\setlength{\oddsidemargin}{0cm}
\setlength{\textheight}{21 cm}

\title[Discrete maximal functions in higher dimensions]
{Discrete maximal functions in higher dimensions\\
 and applications to ergodic theory}

\author{Mariusz Mirek}
\address{Mariusz Mirek \\
	Universit\"{a}t Bonn \\
	Mathematical Institute\\
	Endenicher Allee 60\\
	D--53115 Bonn \\
	Germany \&
	Instytut Matematyczny\\
	Uniwersytet \pl{Wroc{\lll}awski}\\
	Pl. Grun\-waldzki 2/4\\
	50-384 \pl{Wroc{\lll}aw}\\
	Poland}
 \email{mirek@math.uni-bonn.de}

\author{Bartosz Trojan}
\address{
	Bartosz Trojan\\
	Instytut Matematyczny\\
	Uniwersytet \pl{Wroc{\lll}awski}\\
	Pl. Grun\-waldzki 2/4\\
	50-384 \pl{Wroc{\lll}aw}\\
	Poland}
\email{trojan@math.uni.wroc.pl}

\thanks{The authors were partially supported by NCN grant DEC--2012/05/D/ST1/00053}

\begin{document}
\selectlanguage{english}

\begin{abstract}
	We establish a higher dimensional counterpart of Bourgain's pointwise ergodic theorem
	along an arbitrary integer-valued polynomial mapping. We achieve this by proving
	variational estimates $V_r$ on $L^p$ spaces for all $1<p<\infty$ and $r>\max\{p, p/(p-1)\}$.
	Moreover, we obtain the estimates which are uniform in the coefficients of a polynomial mapping
	of fixed degree.
\end{abstract}

\maketitle

\section{Introduction}
In the mid 1980's Bourgain extended Birkhoff's pointwise ergodic theorem, proving  that for any
dynamical system $(X, \mathcal{B}, \mu, T)$ on a $\sigma$-finite measure space $X$ with an
invertible measure preserving transformation $T$ the averages along the squares
$$
A_N f(x)=N^{-1}\sum_{n=1}^Nf\big(T^{n^2}x\big)
$$
converge  $\mu$-almost everywhere on $X$ for all $f\in L^p(X, \mu)$ with $p>1$,
(see \cite{bou1, bou2}). Not long afterwards in \cite{bou}, the squares were replaced
by an arbitrary integer-valued polynomial. The restriction to the range $p>1$ in Bourgain's theorem
turned out to be essential. Recently, Buczolich and Mauldin \cite{BM} have shown that the pointwise
convergence of $A_Nf$ fails on $L^1(X,\mu)$ (see also \cite{LaV}).

In this article we are concerned with $L^p(X, \mu)$ estimates for discrete higher dimensional
analogues of the averaging operator and applications of such estimates to pointwise ergodic
theorems.

Let $(X, \mathcal{B}, \mu)$ be a $\sigma$-finite measure space with a family of invertible,
commuting and measure preserving transformations $T_1, T_2,\ldots,T_{d_0}$ for some $d_0\in\NN$.
Let $\calP=\big(\calP_1, \ldots, \calP_{d_0}\big): \ZZ^k \rightarrow \ZZ^{d_0}$ denote a polynomial
mapping such that each $\calP_j$ is an integer-valued polynomial on $\ZZ^k$ with $\calP_j(0) = 0$.
Define the averages
\begin{align}
	\label{eq:2}
	A_N^{\calP} f(x)
	=N^{-k} \sum_{n\in\NN_N^k}
	f\big(T_1^{\calP_1(n)}T_2^{\calP_2(n)}\cdot\ldots\cdot T_{d_0}^{\calP_{d_{0}}(n)}x\big)
\end{align}
where $\NN^k_N = \{1, 2, \ldots, N\}^k$. The results of this paper establish the following.
\begin{main_theorem}
	\label{thm:3}
	Assume that $p \in (1, \infty)$. For every $f \in L^p(X, \mu)$ there exists
	$f^*\in L^p(X, \mu)$ such that
	\begin{align*}
		\lim_{N\to\infty}A_{N}^{\calP}f(x)=f^*(x)
	\end{align*}
	$\mu$-almost everywhere on $X$.
\end{main_theorem}
Classical proofs of pointwise convergence require $L^p(X, \mu)$ bounds for maximal function,
reducing the problem to proving pointwise convergence for a dense class of $L^p(X, \mu)$ functions.
However, establishing pointwise convergence on a dense class may be a difficult problem.
For instance Bourgain's celebrated averaging operator along the squares is such an example.
One of the possibilities, introduced by Bourgain in \cite{bou}, for overcoming this problem
is to  control the $r$-variational seminorm $V_r$ of a sequence of measurable functions
$\big(\seq{f_j}{j \in \NN}\big)$ defined by
$$
\var{\big(\seq{f_j(x)}{j \in \NN}\big)}
=\sup_{\atop{k_0 < k_1 < \ldots < k_J}{k_j \in \NN}}
\bigg(\sum_{j = 1}^J \lvert f_{k_j}(x) - f_{k_{j-1}}(x) \rvert^r \bigg)^{1/r}.
$$
Indeed, if $\var{\big(\seq{f_j(x)}{j \in \NN}\big)} < \infty$ for some finite $r\ge1$
then the sequence $\big(\seq{f_j(x)}{j \in \NN}\big)$ converges. Theorem \ref{thm:3}, in particular, will follow from more general result. Namely.
 \begin{main_theorem}
	\label{thm:3'}
	Let $p \in (1, \8)$ and $r > \max\{p, p/(p-1)\}$. Then there is a constant $C_{p, r}>0$
	such that for every $f \in L^p(X, \mu)$
	\begin{align}
		\label{eq:2''}
		\big\lVert
		\var{\big(\seq{A_N^\calP f}{N \in \NN}\big)}
		\big\rVert_{L^p}
		\le C_{p, r}
		\vnorm{f}_{L^p}.
	\end{align}
	Moreover, the constant $C_{p, r}$ is independent of the coefficients of the polynomial mapping
	$\calP$.
\end{main_theorem}
In view of Calder\'{o}n's transference principle, one can reduce our problem and work on
$\ZZ^{d_0}$ rather than on an abstract measure space $X$. In this setting we  consider
the averages
\begin{align}
	\label{eq:1}
	M_N^\calP f(x) = N^{-k} \sum_{y \in \NN_N^k} f\big(x - \calP(y)\big)
\end{align}
for any finitely supported function $f: \ZZ^{d_0} \rightarrow \CC$. We will be mainly
interested in $\ell^p$ bounds for $r$-variations of the averages $M_N^\calP$. In this setup
Theorem \ref{thm:3'} can be reformulated in the following way.
\begin{main_theorem}
	\label{thm:2}
	Let $p \in (1, \8)$ and $r > \max\{p, p/(p-1)\}$. There is a constant $C_{p, r}>0$
	such that for every $f \in \ell^p\big(\ZZ^{d_0}\big)$
	\begin{align}
		\label{eq:2'}
		\big\lVert
		\var{\big(\seq{M_N^\calP f}{N \in \NN}\big)}
		\big\rVert_{\ell^p}
		\le C_{p, r}
		\vnorm{f}_{\ell^p}.
	\end{align}
	Moreover, the constant $C_{p, r}$ is independent of the coefficients of the polynomial mapping
	$\calP$.
\end{main_theorem}
Theorem \ref{thm:2} is the main result of this article and generalizes recent one dimensional
variational estimates of Krause \cite{K}. However, its proof will strongly explore maximal theorem
for $M_N^\calP$. Namely, Theorem \ref{thm:1} which is the higher dimensional counterpart of
Bourgain's theorem \cite{bou}.
\begin{main_theorem}
	\label{thm:1}
	For each $p \in (1, \8]$ there is a constant $C_p > 0$ such that for every
	$f \in \ell^p\big(\ZZ^{d_0}\big)$
\begin{align}\label{eq:1'}
  \big\lVert
	\sup_{N \in \NN}
	\big\lvert M_N^\calP f \big\rvert
	\big\rVert_{\ell^p}
	\leq
	C_p \vnorm{f}_{\ell^p}.
\end{align}
	Moreover, the constant $C_p$ is independent of the coefficients of the polynomial mapping
	$\calP$.
\end{main_theorem}
Bourgain's papers \cite{bou1, bou2, bou} initiated extensive study both in pointwise ergodic
theory along various arithmetic subsets of the integers
(see e.g. \cite{BKQW, kev, IMSW, K, M2, M1, wrl}) and investigations of discrete analogues of
classical operators with arithmetic features
(see e.g. \cite{IMSW, I,  IMW, IW, MSW, MSW1, mt2, O, P, P2,  P1, SW3, SW0, SW1, SW2}).
Variational inequalities in harmonic analysis and ergodic theory have been the subject of many
recent articles, see especially \cite{JKRW, JSW, K, OSTTW, zk} and the references given there
(see also \cite{NOT, Ob}).

We were motivated to study pointwise convergence of the averaging operators defined in \eqref{eq:2}
by recent results of Ionescu, Magyar, Stein and Wainger \cite{IMSW}. They
considered pointwise convergence of some noncommutative variants of averaging operators along
the polynomials of degree at most $2$. The desire to better understand the restriction
imposed on the degree of polynomials in \cite{IMSW}, has led to, in particular,
Theorem \ref{thm:2} and Theorem \ref{thm:1}  from this paper. Furthermore, the recent paper of
Krause \cite{K} inspired us to study variational estimates in higher dimensions
--- see Theorem \ref{thm:2} --- which in turn provide an approach to pointwise convergence
different to the argument from \cite{IMSW}. Specifically, in this paper we relax the restriction on
the degree of polynomials and we obtain all the results (maximal and variational estimates and
pointwise convergence) for polynomials of arbitrary degree at the expense of the loss of the
noncommutative setup which was the subject of \cite{IMSW}.

The purpose of this article, compared with the prior works, is threefold. Firstly, as we said
before, we shall relax the restriction for the degree of the polynomials from \cite{IMSW}.
Secondly, we provide variational estimates and thirdly, we will establish bounds in the
inequalities \eqref{eq:2'} and \eqref{eq:1'} which are uniform in the coefficients of underlying
polynomial mapping. The last statement finds applications in the discrete multi-parameter theories
of maximal functions and singular integral operators.

The inequality from \eqref{eq:1'} turned out to be decisive in one parameter theory, for instance  in the ongoing project concerning $\ell^p$ estimates for the maximal function corresponding with truncations of Radon transform from \cite{IW}. Namely,  we have recently established, for some  $p>1$, the following inequality
\begin{align}\label{bounds2}
  \big\|\sup_{N\in\NN}\big|T_N^{\mathcal{P}}f(x)\big|\big\|_{\ell^p}\le C_p\|f\|_{\ell^p}
 \end{align}
 where $T_N^{\mathcal{P}}f$ is a truncated Radon transform along the polynomial mapping $\mathcal{P}$, i.e.
$$
T_N^{\mathcal{P}}f(x)=\sum_{y\in\ZZ_N^k\setminus\{0\}}f(x-\mathcal{P}(y))K(y),
$$
where $K$ is a Calder\'{o}n-Zygmund kernel on $\RR^k$ and $\ZZ^k_N=\{-N, \ldots, -1, 0, 1, \ldots, N\}^k$.
In fact, in the proof of inequality \eqref{bounds2} we had to replace the supremum over the set of integers $\NN$ with the supremum over   the set of dyadic numbers $\{2^n: n\in\NN\cup\{0\}\}$. Since the operators $T_N^{\mathcal{P}}$ are not positive we had to be more careful, but for $N\in[2^n, 2^{n+1})$ we have the pointwise estimate
$$
\big|T_N^{\mathcal{P}}f(x)\big|\le C \big(\big|T_{2^n}^{\mathcal{P}}f(x)\big|+M_N^{\mathcal{P}}|f|(x)\big)
$$
for some $C>0$.

The proof of Theorem \ref{thm:1} will be based on an idea of Ionescu and Wainger from
\cite{IW} where they established $\ell^p$ bounds for the discrete Radon transform by partitioning
the operator into two parts, the first part controllable in $\ell^p$ and the second part
controllable in $\ell^2$. More precisely, for every $\epsilon \in (0, 1]$ and $\lambda>0$ we are
going to find an operator $A^{\lambda, \epsilon}_N$ such that
\begin{align*}
  \big\lVert
	\sup_{N \in \NN}
	\big\lvert M_N^\calP f - A_N^{\lambda, \epsilon} f \big\rvert
	\big\rVert_{\ell^2}
	\leq
	D_\epsilon \lambda^{-1} \vnorm{f}_{\ell^2}
\end{align*}
and for each $p \in (1, \infty)$
\begin{align*}
  \big\lVert
	\sup_{N \in \NN}
	\big\lvert A_N^{\lambda, \epsilon} f \big\rvert
	\big\rVert_{\ell^p}
	\leq
	C_\epsilon \lambda^{\epsilon} \vnorm{f}_{\ell^p}.
\end{align*}
Then with the aid of these two estimates one can use restricted
interpolation techniques as in \cite{IW} and conclude that \eqref{eq:1'} holds. The same idea was
also explored in \cite{IMSW}. Here we are going to make use of this argument and provide a
different approach to the estimates in $\ell^2$ and $\ell^p$ as compared both to Bourgain's paper
\cite{bou} and Ionescu, Magyar, Stein and Wainger's paper \cite{IMSW}. Since the $r$-variational
seminorm controls the supremum norm for any $r\ge1$ we only need $r$-variational estimates on
$\ell^2$. The $\ell^2$ theory for averaging operators along polynomials in \cite{bou} was built,
to a large extent, on the circle method of Hardy and Littlewood and on the ``logarithmic'' lemma
due to Bourgain (see \cite{bou}, see also \cite{lac1}).
\begin{loglemma}
	Assume that $\lambda_1<\ldots<\lambda_K \in \RR$ and for $j\in\NN$ define the neighbourhoods
	$$
	\mathcal{R}_j=\{\xi\in\RR: \min_{1\le k\le K}|\xi-\lambda_k|\le 2^{-j}\}.
	$$
	Then there exists a  constant $C>0$ such that
	\begin{align*}
		\Big\|\sup_{j\in\NN}
		\Big|\int_{\mathcal{R}_j}\hat{f}(\xi)e^{2\pi i \xi x}d\xi\Big|\Big\|_{L^2(dx)}
		\le C(\log K)^3\|f\|_{L^2},
	\end{align*}
	for every $f \in L^2(\RR)$.
\end{loglemma}
Although Bourgain's lemma is interesting in its own right, and is a powerful tool in discrete
problems, it has also found  wide application in problems susceptible to time-frequency analysis (see e.g.
\cite{DLTT, Lac, Thi}). Recently, Nazarov, Oberlin and Thiele \cite{NOT} introduced a
multi-frequency Calder\'{o}n-Zygmund decomposition and extended Bourgain's estimates
to provide $L^p$ bounds and variational estimates (see also \cite{Ob}). Some refinement of the
results from \cite{NOT} established by Krause \cite{K2} turned out  to be
an invaluable tool in variational estimates for Bourgain's averages along polynomials in \cite{K}. In this
paper we propose another approach which avoids the use of Bourgain's lemma
or its counterparts.

Our approach to Theorem \ref{thm:1} and Theorem \ref{thm:2} proceeds in several stages. We begin
with a particular lifting of the operator \eqref{eq:1}. This procedure will permit us to replace
any polynomial mapping $\calP$ by a new polynomial mapping (the canonical polynomial mapping:
see Lemma \ref{lem:1} in Section \ref{sec2}) which has all coefficients equal to $1$. This will
result in the uniform estimates and will reduce the study to the canonical polynomial mapping.
In Section \ref{sec3} we construct suitable approximating multipliers: see the definitions of
\eqref{eq:22}, \eqref{eq:71} and \eqref{eq:72}, and prove  strong $\ell^2$ bounds on their
$r$-variations. These multipliers will be useful in proving Theorem \ref{thm:1} and
Theorem \ref{thm:2} for $p=2$ in Section \ref{sec4} and Section \ref{sec5}, respectively.
The proofs of these $\ell^2$ bounds, on the one hand, will be covered by the multi-dimensional
variant of the circle method of Hardy and Littlewood.

On the other hand, we will make use of some elementary inequalities for $r$-variations from  Section \ref{sec2}, which have not been used in this context. This is the novelty of the paper
and allows us to study the approximating multipliers by a direct analysis which avoids using
results like Bourgain's lemma.

In Section \ref{sec3} we also provide the $\ell^p$ theory,
$p>1$, necessary to obtain Theorem \ref{thm:1}. The strategy of the proof of $\ell^p$ bounds will
be very simple. We shall compare the discrete norm $\| \cdot\|_{\ell^p}$ of our approximating
multipliers with  the continuous norm $\| \cdot\|_{L^p}$ of certain multipliers which are a priori
bounded on $L^p$. But this will only give good bounds when $N$ is restricted to the large cubes
depending on $\lambda$ as in the Ionescu--Wainger partition. The small cubes will be covered by
a restricted $\ell^p$ bound with logarithmic loss for the operator \eqref{eq:1}: see
Theorem \ref{th:5}. This idea was pioneered by Bourgain in \cite{bou} to prove the full range of
$\ell^p$ estimates. Here we will explore this idea, giving a slightly simpler proof of this fact.
All these results will allow us to decompose the operator $M_N^{\calP}$ into two parts
$A_{N, \varepsilon}^{\lambda}$ and $M_N^{\calP}-A_{N, \varepsilon}^{\lambda}$ as was described
above and will establish Theorem \ref{thm:1}: see Section \ref{sec4}. Finally, having proved
Theorem \ref{thm:1} for all $1<p\le \infty$ and Theorem \ref{thm:2} for $p=2$ and $2<r<\infty$ we
shall employ the interpolation argument from Krause's paper \cite{K} and  conclude that
Theorem \ref{thm:2} holds for all $1<p<\infty$ and $r>\max\{p, p/(p-1)\}$.

\subsection{Notation}
Throughout the whole article, unless otherwise stated, we will write $A \lesssim B$
($A \gtrsim B$) if there is an absolute constant $C>0$ such that $A\le CB$ ($A\ge CB$).
Moreover, $C > 0$ will stand for a large positive constant whose value may vary from occurrence to
occurrence. If $A \lesssim B$ and $A\gtrsim B$ hold simultaneously then we will write
$A \simeq B$. Lastly, we will write $A \lesssim_{\delta} B$ ($A \gtrsim_{\delta} B$) to
indicate that the constant $C>0$ depends on some $\delta > 0$. Let $\NN_0 = \NN \cup \{0\}$.
For a vector $x \in \RR^d$ we set $\norm{x} = \max\{\abs{x_j} : 1 \leq j \leq d\}$ and
$\mathcal{D}=\{2^n: n\in\NN_0\}$ will denote the set of dyadic numbers.

\section{Preliminaries}\label{sec2}

\subsection{Variational norm}
Let $1 \leq r < \infty$. For each sequence $\big(\seq{a_j}{j \in A}\big)$ where $A\subseteq\ZZ$
we define $r$-variational seminorm by
$$
\var{\big(\seq{a_j}{j \in A}\big)} = \sup_{\atop{k_0 < k_1 < \ldots < k_J}{k_j \in A}}
	\bigg(\sum_{j = 1}^J \abs{a_{k_j} - a_{k_{j-1}}}^r \bigg)^{1/r}.
$$
The function $r \mapsto \var{\big(\seq{a_j}{j \in A}\big)}$ is non-increasing and satisfies
\begin{equation}
	\label{eq:25}
	\sup_{j \in A} \abs{a_j} \leq
	\var{\big(\seq{a_j}{j \in A}\big)} + \abs{a_{j_0}}
\end{equation}
where $j_0$ is an arbitrary element of $A$. Let
$$
\bvar{\big(\seq{a_j}{j \in A}\big)}=\sup_{j\in A}|a_j|+\var{\big(\seq{a_j}{j \in A}\big)}.
$$
For any subset $B \subseteq A$ we have
$$
\var{\big(\seq{a_j}{j \in B}\big)} \leq \var{\big(\seq{a_j}{j \in A}\big)}.
$$
Moreover, if $-\infty\le u < w < v \leq \infty$ then
\begin{equation}
	\label{eq:19}
	\var{\big(\seq{a_j}{u < j < v}\big)}
	\leq
	2 \sup_{u < j < v} \abs{a_j}
	+ \var{\big(\seq{a_j}{u < j < w}\big)}
	+ \var{\big(\seq{a_j}{w < j < v}\big)}.
\end{equation}
For $r \geq 2$ we also have
\begin{equation}
	\label{eq:18}
	\var{\big(\seq{a_j}{j \in A}\big)}
	\leq 2 \Big(\sum_{j \in A} \abs{a_j}^2\Big)^{1/2}.
\end{equation}
We will need the following simple observation.
\begin{lemma}
	\label{lem:6}
	If $r \geq 2$ then for any sequence $\big(\seq{a_j}{0 \leq j \leq 2^s}\big)$
	\begin{equation}
		\label{eq:62}
		\var{\big(\seq{a_j}{0 \leq j \leq 2^s}\big)}
		\leq
		\sqrt{2}
		\sum_{i = 0}^s
		\bigg(
		\sum_{j = 0}^{2^{s-i}-1}
		\sabs{a_{(j+1)2^i} - a_{j 2^i}}^2
		\bigg)^{1/2}.
	\end{equation}
\end{lemma}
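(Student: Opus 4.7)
Since the $\ell^r$ norm is dominated by the $\ell^2$ norm on counting measures when $r \geq 2$, we have $\var{(\seq{a_j}{0 \leq j \leq 2^s})} \leq \vartwo{(\seq{a_j}{0 \leq j \leq 2^s})}$. Hence it suffices to establish \eqref{eq:62} in the case $r = 2$, and the general case follows at once.

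To bound $V_2$, I would use the standard dyadic decomposition of subintervals. For any integers $0 \leq u < v \leq 2^s$ the interval $[u, v]$ admits a partition
\begin{equation*}
	[u, v] = \bigcup_{i = 0}^{s} \bigcup_{I \in \mathcal{I}^i(u, v)} I
\end{equation*}
into dyadic intervals $I = [j 2^i, (j + 1) 2^i]$ with pairwise disjoint interiors, satisfying $\abs{\mathcal{I}^i(u, v)} \leq 2$ at every scale $2^i$. (This is proved by the greedy algorithm: peel off the largest dyadic interval whose left endpoint is $u$ and which fits inside $[u, v]$, then iterate; the resulting intervals grow up to some maximal scale and then shrink, yielding at most one interval of each scale on each side.)

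Now fix any partition $0 \leq k_0 < k_1 < \ldots < k_J \leq 2^s$ and, for every $\ell$ and $i$, set $\mathcal{I}_\ell^i = \mathcal{I}^i(k_{\ell - 1}, k_\ell)$ and $c_I = a_{(j+1)2^i} - a_{j 2^i}$ whenever $I = [j 2^i, (j+1)2^i]$. Since the intervals $[k_{\ell-1}, k_\ell]$ have disjoint interiors, each dyadic interval $I$ of scale $2^i$ occurs in $\mathcal{I}_\ell^i$ for at most one value of $\ell$. Writing
\begin{equation*}
	a_{k_\ell} - a_{k_{\ell - 1}} = \sum_{i = 0}^{s} \sum_{I \in \mathcal{I}_\ell^i} c_I
\end{equation*}
and applying Minkowski's inequality in $\ell^2$ to the outer sum over $i$,
\begin{equation*}
	\bigg( \sum_{\ell = 1}^J \abs{a_{k_\ell} - a_{k_{\ell - 1}}}^2 \bigg)^{1/2}
	\leq \sum_{i = 0}^{s} \bigg( \sum_{\ell = 1}^J \Big\lvert \sum_{I \in \mathcal{I}_\ell^i} c_I \Big\rvert^2 \bigg)^{1/2}.
\end{equation*}
For each $i$, Cauchy--Schwarz combined with $\abs{\mathcal{I}_\ell^i} \leq 2$ gives $\big\lvert \sum_{I \in \mathcal{I}_\ell^i} c_I \big\rvert^2 \leq 2 \sum_{I \in \mathcal{I}_\ell^i} \abs{c_I}^2$, and summing over $\ell$ while using that each $I$ appears at most once yields the bound $2 \sum_{j = 0}^{2^{s-i} - 1} \sabs{a_{(j+1) 2^i} - a_{j 2^i}}^2$. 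Taking square roots, summing over $i$, and then the supremum over all partitions gives \eqref{eq:62} for $r = 2$.

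The only non-routine ingredient is the existence of the dyadic decomposition with the ``at most two intervals per scale'' property together with the disjointness observation; everything else is a short application of Minkowski and Cauchy--Schwarz. This is precisely what allows us to pay the constant $\sqrt{2}$ rather than something worse.
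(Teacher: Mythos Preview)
Your proof is correct and follows essentially the same route as the paper: both arguments reduce to $r=2$, decompose each interval $[k_{\ell-1},k_\ell)$ into dyadic pieces with at most two per scale (the paper proves this decomposition in more detail, but your greedy description is accurate), then apply Minkowski over scales and the cardinality bound $\lvert\mathcal{I}_\ell^i\rvert\le 2$ to extract the $\sqrt{2}$. The only cosmetic differences are your use of closed intervals and your explicit statement of the $r\ge 2$ reduction up front.
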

\begin{proof}
	Let us observe that any interval $[m, n)$ for $m, n \in \NN$ such that $0 \leq m < n \leq 2^s$,
	is a finite disjoint union of dyadic subintervals, i.e. intervals belonging to some $\calI_i$ for
	$0 \leq i \leq s$, where
	$$
	\calI_i = \big\{[j2^i, (j+1)2^i): 0 \leq j \leq 2^{s-i}-1\}
	$$
	and such that each length appears at most twice. For the proof, we set $m_0 = m$. Having
	chosen $m_p$ we select $m_{p+1}$ in such a way that $[m_p, m_{p+1})$ is the longest dyadic
	interval starting at $m_p$ and contained inside $[m_p, n)$. If the lengths of the selected
	dyadic intervals increase then we are done. Otherwise, there is $p$ such that
	$m_{p+1} - m_p \geq m_{p+2} - m_{p+1}$. We show that this implies
	$m_{p+2} - m_{p+1} > m_{p+3} - m_{p+2}$. Suppose for a contradiction that,
	$m_{p+2} - m_{p+1} \leq m_{p+3} - m_{p+2}$. Then
	$$
	[m_{p+1}, 2 m_{p+2} - m_{p+1}) \subseteq [m_{p+1}, m_{p+3}).
	$$
	Therefore, it is enough to show that $2(m_{p+2} - m_{p+1})$ divides $m_{p+1}$. It is clear
	in case $m_{p+1} - m_p > m_{p+2} - m_{p+1}$. If $m_{p+1} - m_p = m_{p+2} - m_{p+1}$ then,
	by maximality of $[m_p, m_{p+1})$, $2(m_{p+2} - m_{p+1})$ cannot divide $m_p$, thus divides
	$m_{p+1}$.
	
	Next, let $k_0 < k_1 < \ldots < k_J \leq 2^s$ be any increasing sequence. For
	each $j \in \{0, \ldots, J-1\}$ we may write
	$$
	[k_j, k_{j+1}) = \bigcup_{p=0}^{P_j} [u_p^j, u_{p+1}^j)
	$$
	for some $P_j\ge1$ where each interval $[u_p^j, u^j_{p+1})$ is dyadic. Then
	$$
	\lvert
	a_{k_{j+1}} - a_{k_j}
	\rvert
	\leq
	\sum_{p = 0}^{P_j}
	\big \lvert
	a_{u_{p+1}^j} - a_{u_p^j}
	\big \rvert
	=
	\sum_{i = 0}^s
	\sum_{p:\: [u_p^j, u_{p+1}^j) \in \calI_i}
	\big\lvert
	a_{u_{p+1}^j} - a_{u_p^j}
	\big\rvert.
	$$
	Hence, by Minkowski's inequality
	\begin{multline*}
		\Big(
		\sum_{j = 0}^{J-1}
		\sabs{a_{k_{j+1}} - a_{k_j}}^2
		\Big)^{1/2}
		\leq
		\Big(
		\sum_{j = 0}^{J-1}
		\Big(
		\sum_{i = 0}^s
		\sum_{p:\: [u_p^j, u_{p+1}^j) \in \calI_i}
		\big\lvert a_{u^j_p} - a_{u^j_{p+1}} \big\rvert
		\Big)^2
		\Big)^{1/2}\\
		\leq
		\sum_{i = 0}^s
		\Big(
		\sum_{j = 0}^{J-1}
		\Big(
		\sum_{p:\: [u_p^j, u_{p+1}^j) \in \calI_i}
		\big\lvert a_{u^j_p} - a_{u^j_{p+1}} \big\rvert
		\Big)^2
		\Big)^{1/2}.
	\end{multline*}
	Since for a given $i \in \{0, 1, \ldots, 2^s\}$ and $j \in \{0, 1, \ldots, J-1\}$
	the inner sums contain at most two elements we obtain
	$$
	\Big(
	\sum_{j = 0}^{J-1}
	\sabs{a_{k_{j+1}} - a_{k_j}}^2
	\Big)^{1/2}
	\le
	\sqrt{2}
	\sum_{i = 0}^s
	\Big(
	\sum_{j = 0}^{J-1}
	\sum_{p:\: [u_p^j, u_{p+1}^j) \in \calI_i}
	\big\lvert a_{u^j_p} - a_{u^j_{p+1}} \big\rvert^2
	\Big)^{1/2}
	$$
	what is bounded by the right-hand side of \eqref{eq:62}.
\end{proof}
A \emph{long variation seminorm} $V_r^L$ of a sequence $\big(\seq{a_j}{j \in A}\big)$, is given by
\begin{align*}
	V_r^L\big(a_j: j\in A\big)=V_{r}\big(a_j: j\in A\cap\mathcal{D}\big).
\end{align*}
A \emph{short variation seminorm} $V^S_r$ is given by
\begin{align*}
	V^S_r\big(a_j: j\in A\big)
	=
	\bigg(\sum_{n \ge 0} V_r\big(a_j: j \in A_n\big)^r\bigg)^{1/r}
\end{align*}
where $A_n = A \cap [2^n, 2^{n+1})$. Then
\begin{align}
	\label{eq:32}
	V_r\big(a_j: j\in \NN\big)
	\lesssim
	V_r^L\big(a_j: j\in \NN\big)
	+V_r^S\big(a_j: j\in \NN\big).
\end{align}
The next lemma will be used in the estimates for short variations. It illustrates the ideas
which have been explored several times (see \cite{JKRW}, or recently \cite{K}).
\begin{lemma}
	\label{lem:5}
	Let $u, v \in \NN$, $u < v$. For any integer $h \in \{1,\ldots, v-u\}$ there is a strictly
	increasing sequence of integers $\big(\seq{m_j}{0 \leq j \leq h}\big)$ with $m_0 = u$ and
	$m_h = v$ such that for every $r\ge2$
	$$
	V_r\big(a_j: u \leq j \leq v\big)
	\lesssim
	\bigg(\sum_{j=0}^h|a_{m_j}|^2\bigg)^{1/2}
	+\big((v-u)/h\big)^{1/2}\bigg(\sum_{j=u}^{v-1}|a_{j+1}-a_j|^2\bigg)^{1/2}.
	$$
\end{lemma}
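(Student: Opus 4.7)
The plan is to prove the lemma in two stages: first establish an abstract decomposition of the $r$-variation when the index interval is split into $h$ sub-blocks, then choose the splitting points $m_j$ almost equi-spaced so that each block has length $\lesssim (v-u)/h$ and estimate each piece separately. Concretely, I take $m_j = u + \lfloor j(v-u)/h \rfloor$ so that $m_0 = u$, $m_h = v$, the sequence is strictly increasing (since $h\le v-u$), and $L_j:=m_{j+1}-m_j \le \lceil (v-u)/h\rceil \lesssim (v-u)/h$.

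The main step is the decomposition inequality
\begin{equation*}
V_r\bigl(a_k: u\le k\le v\bigr)
\lesssim
V_r\bigl(a_{m_j}: 0\le j\le h\bigr)
+\Bigl(\sum_{j=0}^{h-1} V_r\bigl(a_k: m_j\le k\le m_{j+1}\bigr)^r\Bigr)^{1/r},
\end{equation*}
valid for any $r\ge 1$. For an arbitrary increasing sequence $k_0<k_1<\ldots<k_J$ in $[u,v]$, I classify each consecutive pair $(k_i,k_{i+1})$ as \emph{internal} (both endpoints in a common block $[m_j,m_{j+1}]$) or \emph{crossing}. The internal pairs contribute at most $\sum_j V_r(\text{block }j)^r$. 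For a crossing pair with $k_i\in[m_j,m_{j+1})$ and $k_{i+1}\in[m_{j'},m_{j'+1}]$, I use the telescoping
$a_{k_{i+1}}-a_{k_i}=(a_{k_{i+1}}-a_{m_{j'}})+(a_{m_{j'}}-a_{m_{j+1}})+(a_{m_{j+1}}-a_{k_i})$
together with $|x+y+z|^r\le 3^{r-1}(|x|^r+|y|^r+|z|^r)$. Because for each block $j$ at most one crossing pair starts in block $j$ and at most one ends in block $j$, the sum of $|a_{m_{j+1}}-a_{k_i}|^r$ and $|a_{k_{i+1}}-a_{m_{j'}}|^r$ boundary contributions is controlled by $\sum_j V_r(\text{block }j)^r$; the middle terms $|a_{m_{j'}}-a_{m_{j+1}}|^r$ correspond to an increasing selection in $(a_{m_j})_{0\le j\le h}$ and are controlled by $V_r(a_{m_j}: 0\le j\le h)^r$.

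Once the decomposition is established, the two pieces are estimated separately, using $r\ge 2$. For the sampled term I invoke \eqref{eq:18}:
\begin{equation*}
V_r(a_{m_j}: 0\le j\le h)\le V_2(a_{m_j}: 0\le j\le h)\le 2\Bigl(\sum_{j=0}^h |a_{m_j}|^2\Bigr)^{1/2}.
\end{equation*}
For each block I dominate $V_r$ by the $1$-variation and apply Cauchy--Schwarz:
\begin{equation*}
V_r(a_k: m_j\le k\le m_{j+1})\le \sum_{k=m_j}^{m_{j+1}-1}|a_{k+1}-a_k|
\le L_j^{1/2}\Bigl(\sum_{k=m_j}^{m_{j+1}-1}|a_{k+1}-a_k|^2\Bigr)^{1/2}.
\end{equation*}
Raising to the $r$-th power and using $L_j\lesssim (v-u)/h$, the superadditivity $\sum_j x_j^{r/2}\le (\sum_j x_j)^{r/2}$ (which holds for $r\ge 2$ and $x_j\ge 0$) yields
\begin{equation*}
\Bigl(\sum_j V_r(\text{block }j)^r\Bigr)^{1/r}\lesssim \bigl((v-u)/h\bigr)^{1/2}\Bigl(\sum_{k=u}^{v-1}|a_{k+1}-a_k|^2\Bigr)^{1/2},
\end{equation*}
and combining this with the bound on the sampled part gives the statement.

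The step I expect to take most care is the crossing-pair analysis in the decomposition inequality; once the combinatorial bookkeeping is handled (each block absorbs at most two boundary crossings, and the middle differences along the $m_j$'s themselves form an increasing selection), the rest is a clean interpolation between the trivial $V_2$ bound from \eqref{eq:18} and Cauchy--Schwarz on the $1$-variation, with the restriction $r\ge 2$ entering precisely through the use of \eqref{eq:18} and through the subadditivity of $x\mapsto x^{r/2}$.
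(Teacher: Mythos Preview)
Your proof is correct and follows essentially the same route as the paper: choose almost equi-spaced break points $m_j$, split the variation into a sampled part and block parts, bound the sampled part via \eqref{eq:18}, and bound each block by the $1$-variation followed by Cauchy--Schwarz. The only difference is that the paper first observes that it suffices to treat $r=2$ (since $V_r\le V_2$ for $r\ge2$), which makes the final summation over blocks a direct equality $\sum_j x_j=\sum_j x_j$ rather than requiring your superadditivity step $\sum_j x_j^{r/2}\le(\sum_j x_j)^{r/2}$; your crossing-pair bookkeeping for the decomposition inequality is exactly what the paper leaves implicit in its first displayed estimate.
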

\begin{proof}
	It is enough to consider $r = 2$. Fix $h \in \{1,\ldots, v-u\}$ and choose a sequence
	$\big(\seq{m_j}{1 \leq j \leq h}\big)$ such that
	$m_0 = u$, $m_h = v$ and $|m_{j+1}-m_j| \simeq (v-u)/h$. Then
	\begin{multline*}
		V_2\big(a_j: u \leq j \leq v\big)
		\lesssim \Big(\sum_{j=0}^h|a_{m_j}|^2\Big)^{1/2}
		+\Big(\sum_{j=0}^{h-1}V_2\big(a_k: m_j \leq k \le m_{j+1}\big)^2\Big)^{1/2}\\
		\lesssim
		 \Big(\sum_{j=0}^h|a_{m_j}|^2\Big)^{1/2}
		+\Big(\sum_{j=0}^{h-1}\Big(\sum_{k=m_j}^{m_{j+1}-1}|a_{k+1}-a_k|\Big)^2\Big)^{1/2}.
	\end{multline*}
	By the Cauchy--Schwarz inequality the last sum can be dominated by
	\begin{multline*}
		\Big(\sum_{j=0}^h|a_{m_j}|^2\Big)^{1/2}
		+\Big(\sum_{j=0}^{h-1}(m_{j+1}-m_j)\sum_{k=m_j}^{m_{j+1}-1}|a_{k+1}-a_k|^2\Big)^{1/2}\\
		\lesssim
		\Big(\sum_{j=0}^h|a_{m_j}|^2\Big)^{1/2}
		+\big((v-u)/h\big)^{1/2}\Big(\sum_{j=u}^{v-1}|a_{j+1}-a_j|^2\Big)^{1/2}
\end{multline*}
and this completes the proof of the lemma.
\end{proof}
We observe that, if $(f_j: j\in\NN)$ is a sequence of functions in $\ell^2$ and $v-u\ge2$ then
\begin{align}
	\label{eq:39}
	\big\|V_r\big(f_j: j\in [u, v]\big)\big\|_{\ell^2}
	\lesssim
	\max\big\{A,
	(v-u)^{1/2} A^{1/2} B^{1/2}\big\}
\end{align}
where
$$
A =	\max_{u \leq j \leq v}\|f_j\|_{\ell^2},
\quad \text{and} \quad
B = \max_{u \leq j < v}\|f_{j+1}-f_j\|_{\ell^2}.
$$
Indeed, let
$$
h=\big\lceil (v-u) B / (4A)\big\rceil.
$$
Then $h \in [1, v- u]$. If $h\ge2$ we may estimate
$$
\big\|V_r\big(f_j: u \leq j \leq v\big)\big\|_{\ell^2}
\lesssim
A h^{1/2} + (v-u) B /h^{1/2}
\lesssim
(v-u)^{1/2} A^{1/2} B^{1/2}
$$
where the last inequality follows from Lemma \ref{lem:5}. If $h=1$ then
$B \lesssim (v - u)^{-1} A$ and hence
$$
\big\|V_r\big(f_j: u \leq j \leq v\big)\big\|_{\ell^2}
\lesssim
A.
$$

\subsection{Lifting lemma}
Let $\calP=(\calP_1,\ldots, \calP_{d_0}): \ZZ^k \rightarrow \ZZ^{d_0}$ be a mapping whose
components $\calP_j$ are integer valued polynomials on $\ZZ^k$ such that $\calP_j(0) = 0$. We set
$$
N_0 = \max\{ \deg \calP_j : 1 \leq j \leq d_0\}.
$$
It is convenient to work with the set
$$
\Gamma =
\big\{
	\gamma \in \ZZ^k \setminus\{0\} : 0 \leq \gamma_j \leq N_0
	\text{ for each } j = 1, \ldots, k
\big\}
$$
with the lexicographic order. Then each $\calP_j$ can be expressed as
$$
\calP_j(x) = \sum_{\gamma \in \Gamma} c_j^\gamma x^\gamma
$$
for some $c_j^\gamma \in \ZZ$. Let us denote by $d$ the cardinality of the set $\Gamma$.
We identify $\RR^d$ with the space of all vectors whose coordinates are labelled by multi-indices
$\gamma \in \Gamma$. Let $A$ be a diagonal $d \times d$ matrix such that
$$
(A v)_\gamma = \abs{\gamma} v_\gamma.
$$
For $t > 0$ we set
$$
t^{A}=\exp(A\log t)
$$
i.e. $t^A x=(t^{|\gamma|}x_{\gamma}: \gamma\in \Gamma)$. Next, we introduce the \emph{canonical}
polynomial mapping
$$
\calQ = \big(\seq{Q_\gamma}{\gamma \in \Gamma}\big) : \ZZ^k \rightarrow \ZZ^d
$$
where $\calQ_\gamma(x) = x^\gamma$ and $x^\gamma=x_1^{\gamma_1}\cdot\ldots\cdot x_k^{\gamma_k}$.
The coefficients $\big(\seq{c_j^\gamma}{\gamma \in \Gamma, j \in \{1, \ldots, d_0\}}\big)$ define
a linear transformation $L: \RR^d \rightarrow \RR^{d_0}$ such that $L\calQ = \calP$. Indeed, it is
enough to set
$$
(L v)_j = \sum_{\gamma \in \Gamma} c_j^\gamma v_\gamma
$$
for each $j \in \{1, \ldots, d_0\}$ and $v \in \RR^d$. The next lemma, inspired by the continuous analogue (see \cite{deL} or \cite[p. 515]{bigs})
reduces proofs of Theorem \ref{thm:1} and Theorem \ref{thm:2} to the canonical polynomial mapping.
\begin{lemma}
	\label{lem:1}
	Suppose that for some $p \in (1, \infty)$ and $r > 2$
	$$
	\big\lVert
	\bvar{\big(\seq{M_N^\calQ f}{N \in \NN}\big)}
	\big\rVert_{\ell^p(\ZZ^d)}
	\leq C_{p, r}
	\vnorm{f}_{\ell^p(\ZZ^d)}.
	$$
	Then
	\begin{equation}
		\label{eq:6}
		\big\lVert
		\bvar{\big(\seq{M_N^\calP f}{N \in \NN}\big)}
		\big\rVert_{\ell^p(\ZZ^{d_0})}
		\leq
		C_{p, r}
		\vnorm{f}_{\ell^p(\ZZ^{d_0})}.
	\end{equation}
\end{lemma}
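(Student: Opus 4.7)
The plan is to transfer the hypothesis on $\ZZ^d$ to $\ZZ^{d_0}$ by exploiting the factorisation $\calP = L\calQ$ together with a Calder\'on-type truncation. For each $x \in \ZZ^{d_0}$ introduce the auxiliary function $F_x : \ZZ^d \to \CC$ given by $F_x(z) = f(x + Lz)$. A direct substitution in the definition \eqref{eq:1} yields the pointwise intertwining identity
$$
M_N^{\calQ} F_x(z_0) = M_N^{\calP} f(x + L z_0),
\qquad x \in \ZZ^{d_0},\ z_0 \in \ZZ^d,\ N \in \NN,
$$
which converts averages along $\calQ$ on $\ZZ^d$ into averages along $\calP$ on $\ZZ^{d_0}$.

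Since $F_x$ need not lie in $\ell^p(\ZZ^d)$, I would localise. Fix $K \in \NN$ and pick $R_K$ such that $\norm{\calQ(y)} \leq R_K$ for every $y \in \NN_K^k$. For $M > R_K$, set $\Omega_M = \{z \in \ZZ^d : \norm{z} \leq M\}$, $\Omega_M' = \{z \in \ZZ^d : \norm{z} \leq M - R_K\}$, and $F_x^M = F_x \cdot \ind{\Omega_M}$. Whenever $z_0 \in \Omega_M'$ and $N \leq K$, the shifts $z_0 - \calQ(y)$ all lie in $\Omega_M$, so the identity above remains valid with $F_x^M$ in place of $F_x$. Since $F_x^M$ has finite support, the hypothesis of the lemma applied to $F_x^M$ yields
$$
\sum_{z_0 \in \Omega_M'} \bvar{\big(\seq{M_N^{\calP} f(x + L z_0)}{N \leq K}\big)}^p
\leq C_{p,r}^p \vnorm{F_x^M}_{\ell^p(\ZZ^d)}^p.
$$

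Now sum over $x \in \ZZ^{d_0}$. By Fubini and translation invariance on $\ZZ^{d_0}$, the left-hand side becomes $|\Omega_M'| \cdot \vnorm{\bvar{(\seq{M_N^{\calP} f}{N \leq K})}}_{\ell^p(\ZZ^{d_0})}^p$, while the right-hand side becomes $C_{p,r}^p |\Omega_M| \vnorm{f}_{\ell^p(\ZZ^{d_0})}^p$ (because $\sum_x |f(x + Lz)|^p = \vnorm{f}_{\ell^p(\ZZ^{d_0})}^p$ for every $z \in \ZZ^d$). Dividing by $|\Omega_M'|$ and letting $M \to \infty$ with $K$ fixed, the ratio $(M/(M - R_K))^d$ tends to $1$; then letting $K \to \infty$ by monotone convergence in the range of the variation establishes \eqref{eq:6}.

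The whole argument is essentially bookkeeping once the intertwining identity is observed. The only delicate point is the choice of truncation: the inner cube $\Omega_M'$ must be shrunk by only the fixed amount $R_K$ determined by the current window of $N$'s, so that the volume ratio $|\Omega_M|/|\Omega_M'|$ tends to $1$ as $M \to \infty$ and no loss is incurred in the constant. Beyond this, no property of $L$ (injectivity, rank, etc.) is needed.
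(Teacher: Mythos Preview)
Your proof is correct and follows essentially the same approach as the paper's: both introduce the auxiliary function $F_x(z)=f(x+Lz)$, use the identity $M_N^{\calQ}F_x(z_0)=M_N^{\calP}f(x+Lz_0)$, truncate in $z$, apply the hypothesis, sum over $x$ and $z_0$, and pass to the limit via a volume ratio tending to $1$ followed by monotone convergence in the variation range. The only differences are notational (your $K,M,R_K,\Omega_M,\Omega_M'$ correspond to the paper's $\Lambda,R,\Lambda^{kN_0}$ and the cubes of side $2R+1$ and $2(R+\Lambda^{kN_0})+1$).
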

\begin{proof}
	Let $R > 0$ and $\Lambda > 0$ be fixed. Let $f \in \ell^p\big(\ZZ^{d_0}\big)$. In the proof we
	let $x \in \ZZ^{d_0}$, $y \in \ZZ^k$ and $u \in \ZZ^d$. For any $x \in \ZZ^{d_0}$ we define a
	function $F_x$ on $\ZZ^d$ by
	$$
	F_x(z) =
	\begin{cases}
		f(x + L(z)) & \text{ if } \norm{z} \leq R + \Lambda^{k N_0},\\
		0 & \text{ otherwise.}
	\end{cases}
	$$
	If $\norm{y} \leq N$ and $\norm{u} \leq R$ then $\norm{u - \calQ(y)} \leq R + \Lambda^{kN_0}$.
	Therefore for each $x \in \ZZ^{d_0}$
	$$
	M_N^\calP f(x+L u)
	=
	\frac{1}{N^k} \sum_{y \in \NN_N^k} f\big(x + L\big(u - \calQ(y)\big)\big)
	=
	M_N^\calQ F_x(u).
	$$
	Hence,
	\begin{multline*}
		\big\lVert
		\bvar{\big(\seq{M^\calP_N f}{N \in [1, \Lambda]}\big)}
		\big\rVert_{\ell^p(\ZZ^{d_0})}^p
		=
		\frac{1}{(2R+1)^d}
		\sum_{x \in \ZZ^{d_0}}
		\sum_{\norm{u} \leq R}
		\Big(
		\bvar{\big(\seq{M^\calP_N f(x+Lu) }{N \in [1, \Lambda]}\big)}
		\Big)^p\\
		=
		\frac{1}{(2R+1)^d}
		\sum_{x \in \ZZ^{d_0}}
		\sum_{\norm{u} \leq R}
		\Big(
		\bvar{\big(\seq{M^\calQ_N F_x(u) }{N \in [1, \Lambda]}\big)}
		\Big)^p
		\leq
		C_{p, r}^p
		\frac{1}{R^d}
		\sum_{x \in \ZZ^{d_0}}
		\sum_{u \in \ZZ^d}
		\lvert F_x(u) \rvert^p
	\end{multline*}
	where in the last inequality we have used
	$$
	\big\lVert
	\bvar{\big(\seq{M_N^\calQ g}{N \in [1, \Lambda]}\big)}
	\big\rVert_{\ell^p(\ZZ^d)}
	\leq
	C_{p, r}
	\vnorm{g}_{\ell^p(\ZZ^d)}
	$$
	for any $g \in \ell^p\big(\ZZ^d\big)$. Since
	$$
	\sum_{x \in \ZZ^{d_0}} \sum_{u \in \ZZ^d} \abs{F_x(u)}^p
	=
	\sum_{x \in \ZZ^{d_0}} \sum_{\norm{u} \leq R + \Lambda^{kN_0}}
	\abs{f(x + L u)}^p \leq \big(R + \Lambda^{kN_0}\big)^d \vnorm{f}_{\ell^p(\ZZ^{d_0})}^p
	$$
	we get
	$$
	\big\lVert
	\bvar{\big(\seq{M^\calP_N f}{N \in [1, \Lambda]}\big)}
	\big\rVert_{\ell^p(\ZZ^{d_0})}^p
	\leq
	C_{p, r}^p
	\bigg(1 + \frac{\Lambda^{kN_0}}{R}\bigg)^d
	\vnorm{f}_{\ell^p(\ZZ^{d_0})}^p.
	$$
	Taking $R$ approaching infinity we conclude
	$$
	\big\lVert
	\bvar{\big(\seq{M^\calP_N f}{N \in [1, \Lambda]}\big)}
	\big\rVert_{\ell^p(\ZZ^{d_0})}^p
	\leq
	C_{p, r}^p
	\vnorm{f}_{\ell^p(\ZZ^{d_0})}^p
	$$
	which by monotone convergence theorem implies \eqref{eq:6}.
\end{proof}
In the rest of the article by $M_N$ we denote the average for canonical polynomial mapping $\calQ$,
i.e. $M_N = M_N^\calQ$.

\subsection{Gaussian sums}
Given $q \in \NN$ we set $\NN_q = \{a \in \NN : 1 \leq a \leq q\}$. Let $A_q$ be the subset of
$a \in \NN_q^d$ such that
$$
\gcd\big(q, \gcd(a_\gamma : \gamma \in \Gamma)\big) = 1.
$$
For any $q \in \NN$ and $a \in \ZZ^d$ we define
$$
G(a/q) = q^{-k} \sum_{y \in \NN_q^k} e^{2\pi i \sprod{(a/q)}{\calQ(y)}}.
$$
Then there is $\delta > 0$ such that for any $a \in A_q$ (see \cite{SW0, IW})
\begin{equation}
	\label{eq:8}
	\abs{G(a/q)}
	\lesssim
	q^{-\delta}.
\end{equation}

\subsection{Fourier multipliers}
For a function $f \in L^1\big(\RR^d\big)$ let
$$
\calF{f}(\xi) = \int_{\RR^d} e^{2\pi i \sprod{\xi}{x}} f(x) dx
$$
be the Fourier transform of $f$. If $f \in \ell^1\big(\ZZ^d\big)$ let
$$
\hat{f}(\xi) = \sum_{x \in \ZZ^d} e^{2\pi i \sprod{\xi}{x}} f(x)
$$
be the discrete Fourier transform of $f$.
For any function $f: \ZZ^d \rightarrow \CC$ with a finite support we have
$$
M_N f(x) = K_N * f(x)
$$
where $K_N$ is a kernel defined by
\begin{align}
	\label{eq:34}
	K_N(x) = N^{-k} \sum_{y \in \NN_N^k} \delta_{\calQ(y)}
\end{align}
and $\delta_y$ denotes Dirac's delta at $y \in \ZZ^k$. Let $m_N$ denote the discrete  Fourier transform of $K_N$, i.e.
$$
m_N(\xi) = N^{-k} \sum_{y \in \NN_N^k} e^{2\pi i \sprod{\xi}{\calQ(y)}}.
$$
Finally, we define
$$
\Phi_N(\xi) = \int_{[0, 1]^k} e^{2\pi i \sprod{\xi}{\calQ(N y)}} dy.
$$
Using a multi-dimensional version of van der Corput lemma (see \cite{bigs, ccw}) we may
estimate
\begin{equation}
	\label{eq:9}
	\abs{\Phi_N(\xi)}
	\lesssim
	\min\big\{1, \norm{N^A \xi}^{-1/d} \big\}.
\end{equation}
Additionally, we have
\begin{equation}
	\label{eq:10}
	\abs{\Phi_N(\xi) - 1}
	\lesssim
	\min\big\{1, \norm{N^A \xi}\big\}.
\end{equation}

\section{Approximating multipliers}
\label{sec3}
The purpose of this section is to introduce  multipliers \eqref{eq:22}, \eqref{eq:71} and
\eqref{eq:72}. In the first two subsections we collect some $\ell^2(\ZZ^d)$ and $\ell^p(\ZZ^d)$
estimates. Then we apply these results to obtain unrestricted and restricted type inequalities
for our multipliers. The last two subsections provide bounds necessary to establish
Theorem \ref{thm:1} and Theorem \ref{thm:2}. Throughout the rest of the article the maximal
functions  will be initially defined for any nonnegative finitely supported function $f$
and unless otherwise stated $f$ is always such a function.

\subsection{$\ell^2$-theory}
We begin with some basic approximations of the multiplier $m_N$ forced by some multi-dimensional
variant of the circle method of Hardy and Littlewood.

We fix $N \geq 1$. For any $\alpha, \beta > 0$ we define a family of \emph{major arcs} by
$$
\mathfrak{M}_N
= \bigcup_{1 \leq q \leq N^\alpha} \bigcup_{a \in A_q} \mathfrak{M}_N(a/q)
$$
where
$$
\mathfrak{M}_N(a/q)
=
\big\{\xi \in \TT^d : \abs{\xi_\gamma - a_\gamma /q} \leq N^{- \abs{\gamma} + \beta}
\text{ for all } \gamma \in \Gamma
\big\}.
$$
The set $\mathfrak{m}_N=\TT^d\setminus \mathfrak{M}_N$ will be called \emph{minor arc}. We treat
the interval $[0, 1]^d$ as $d$-dimensional torus $\TT^d=\RR^d/\ZZ^d$.
\begin{proposition}
	\label{prop:1}
	For any $\kappa > 0$ there exists $C > 0$ such that if for some $1 \leq q \leq N^\alpha$
	and $a \in A_q$
	$$
	\Big\lvert
	\xi_\gamma - \frac{a_\gamma}{q}
	\Big\rvert
	\leq
	\kappa
	\cdot
	N^{-\abs{\gamma} + \beta}
	$$
	for all $\gamma \in \Gamma$ then
	$$
	\big\lvert
	m_N(\xi) - G(a/q) \Phi_N(\xi - a/q)
	\big\rvert
	\le C
	N^{-1/4},
	$$
	provided that $4(\alpha+\beta)<1$.
\end{proposition}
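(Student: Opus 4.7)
The plan is to run the standard Hardy--Littlewood circle-method approximation: split $\NN_N^k$ into residue classes mod $q$, peel off the Gauss sum $G(a/q)$ from the arithmetic part, and recognize what is left as a Riemann sum converging to $\Phi_N(\xi - a/q)$. Concretely, write $\xi = a/q + \theta$ with $|\theta_\gamma| \leq \kappa N^{-|\gamma|+\beta}$, and every $y \in \NN_N^k$ uniquely as $y = qm + r$ with $r \in \NN_q^k$ and $m$ in a set $M(r) \subseteq \ZZ^k$ of cardinality $(N/q)^k + O((N/q)^{k-1})$. The key arithmetic identity is $\calQ_\gamma(qm+r) \equiv r^\gamma \pmod q$ (since $\calQ_\gamma(x) = x^\gamma$), which gives
$$
e^{2\pi i \sprod{a/q}{\calQ(qm+r)}} = e^{2\pi i \sprod{a/q}{\calQ(r)}},
$$
so the arithmetic phase depends only on $r$.

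The analytic phase $e^{2\pi i \sprod{\theta}{\calQ(qm+r)}}$ needs to be replaced by $e^{2\pi i \sprod{\theta}{\calQ(qm)}}$. For this I would expand
$$
\calQ_\gamma(qm+r) - \calQ_\gamma(qm) = \prod_j(qm_j+r_j)^{\gamma_j} - \prod_j(qm_j)^{\gamma_j},
$$
which is a polynomial in $(qm,r)$ whose monomials each carry at least one factor of $r$. Using $|qm_j| \leq N$ and $|r_j| \leq q$ this yields $|\calQ_\gamma(qm+r) - \calQ_\gamma(qm)| \lesssim q N^{|\gamma|-1}$, hence
$$
\big|\sprod{\theta}{\calQ(qm+r) - \calQ(qm)}\big| \lesssim \sum_{\gamma \in \Gamma} \kappa N^{-|\gamma|+\beta} \cdot q N^{|\gamma|-1} \lesssim N^{\alpha + \beta - 1},
$$
so replacing the exponential produces an error of size $N^{\alpha+\beta-1}$ per summand.

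After these two reductions the average factorizes as
$$
m_N(\xi) = \frac{1}{N^k} \Big(\sum_{r \in \NN_q^k} e^{2\pi i \sprod{a/q}{\calQ(r)}}\Big) \sum_{m \in M} e^{2\pi i \sprod{\theta}{\calQ(qm)}} + O(N^{\alpha+\beta-1}),
$$
where $M$ is any fixed cube of side $\lfloor N/q \rfloor$; the boundary mismatch between $M$ and the actual ranges $M(r)$ is of cardinality $O(q \cdot (N/q)^{k-1}) \cdot q^k / N^k = O(q/N) = O(N^{\alpha-1})$ after normalization. The first factor equals $q^k G(a/q)$ by definition. For the second, rescale $z = qm/N \in [0,1]^k$ and use $\calQ_\gamma(qm) = N^{|\gamma|}(qm/N)^\gamma$ to recognize the sum as a Riemann sum for $\Phi_N(\theta) = \int_{[0,1]^k} e^{2\pi i \sprod{N^A \theta}{\calQ(z)}} dz$ with mesh $q/N$. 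Since the gradient of $z \mapsto e^{2\pi i \sprod{N^A\theta}{\calQ(z)}}$ on $[0,1]^k$ is $O(|N^A \theta|) = O(N^\beta)$, the Riemann sum error is $O((q/N) \cdot N^\beta) = O(N^{\alpha+\beta-1})$.

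Combining all three error contributions gives $|m_N(\xi) - G(a/q)\Phi_N(\xi - a/q)| \lesssim N^{\alpha+\beta-1}$, which is $\leq N^{-1/4}$ whenever $\alpha + \beta < 3/4$; the hypothesis $4(\alpha+\beta) < 1$ gives much more than enough room. The only real bookkeeping obstacle is tracking the boundary-of-$M$ contribution cleanly, which is why I would fix an $r$-independent cube $M$ from the start and absorb the mismatch into the overall $N^{\alpha-1}$ error rather than adjusting $M$ with $r$.
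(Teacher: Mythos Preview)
Your proof is correct and follows essentially the same approach as the paper's: split $\NN_N^k$ into residue classes modulo $q$, use $y^\gamma \equiv r^\gamma \pmod q$ to peel off the Gauss sum, and then compare the remaining sum to the integral $\Phi_N(\theta)$ via a Riemann-sum / mean-value argument. The paper compresses all of your steps 3--5 into the single phrase ``by the mean value theorem, since $1 \leq q \leq N^\alpha$ and $|\theta_\gamma| \leq N^{-|\gamma|+\beta}$,'' so your write-up is in fact a faithful unpacking of the paper's argument (and your error $O(N^{\alpha+\beta-1})$ is sharper than the stated $N^{-1/4}$, as you note).
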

\begin{proof}
	Let $\theta = \xi - a/q$. If $y,r \in \NN^k$ are such that $y \equiv r \pmod q$
	then for each $\gamma \in \Gamma$
	$$
	\xi_\gamma y^{\gamma} \equiv \theta_\gamma y^\gamma + (a_\gamma/q) r^\gamma \pmod 1.
	$$
	Hence,
	$$
	N^{-k}\sum_{y \in \NN_N^k} e^{2 \pi i \sprod{\xi}{\calQ(y)}}
	=
	N^{-k}\sum_{r \in \NN_q^k} e^{2\pi i \sprod{(a/q)}{\calQ(r)}}
	\sum_{\atop{y \in \NN_N^k}{q \mid (y - r)}}
	e^{2\pi i \sprod{\theta}{\calQ(y)}}= G(a/q) \Phi_N(\xi - a/q)+O\big(N^{-1/4}\big).
	$$
	The last equality has been achieved by the mean value theorem, since $1 \leq q \leq N^\alpha$
	and $\abs{\theta_\gamma} \leq N^{- \abs{\gamma} + \beta}$ for every $\gamma\in \Gamma$.
\end{proof}

For any $s\in\NN$ we set
$$
\mathscr{R}_s = \big\{ a/q \in \QQ^d: 2^s \leq q < 2^{s+1} \mbox{ and } a \in A_q\big\}
$$
and $\mathscr{R}_0=\{0\}$. Let $\nu_N = \sum_{s \geq 0} \nu_N^s$ with a sequence of
multipliers $(\seq{\nu_N^s}{s \geq 0})$ given by
\begin{equation}
	\label{eq:22}
	\nu_N^s(\xi) = \sum_{a/q \in \mathscr{R}_s} G(a/q) \Phi_N(\xi - a/q) \eta_s(\xi - a/q)
\end{equation}
where $\eta_s(\xi) = \eta\big(10^{(s+1)A} \xi\big)$ and $\eta: \RR^d \rightarrow \RR$ is a smooth
function such that $0 \leq \eta(x) \leq 1$ and
\begin{align}\label{eq:56}
  \eta(x) =
\begin{cases}
	1 & \text{ for } \norm{x} \leq 1/4, \\
	0 & \text{ for } \norm{x} \geq 1/2.
\end{cases}
\end{align}
We may assume that $\eta$ is a convolution of two smooth nonnegative functions with supports
contained in $[-1/2, 1/2]^d$.
\begin{lemma}
	\label{lem:4}
	If $4(\alpha + \beta) < 1$ then there are $C > 0$ and $\delta_1 > 0$ such that for all
	$N \in \NN$
	$$
	\sup_{\xi \in \TT^d}
	\big\lvert
	m_N(\xi) - \nu_N(\xi)
	\big\rvert
	\leq C
	N^{-\delta_1}.
	$$
\end{lemma}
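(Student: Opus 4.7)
The plan is to exploit two features of $\nu_N$ --- disjointness of the bumps within each $\mathscr{R}_s$ and decay of the Gauss sums in $q$ --- and to combine Proposition \ref{prop:1} on the major arcs with a Weyl-type estimate on the minor arcs. First I would verify that within each $\mathscr{R}_s$ the supports $\operatorname{supp}\eta_s(\cdot-a/q)$ are pairwise disjoint: two distinct rationals with denominators in $[2^s,2^{s+1})$ differ by at least $1/(qq')\ge 4^{-(s+1)}$ in some coordinate $\gamma$, while the bump has radius $\tfrac12 10^{-(s+1)|\gamma|}<\tfrac12 4^{-(s+1)}$ there, so at most one term in each $\nu_N^s$ is nonzero at a given $\xi$. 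I would then pick a threshold $s_0$ with $2^{s_0+1}\simeq N^{\alpha}$ and decompose $\nu_N=\nu_N^{\mathrm{low}}+\nu_N^{\mathrm{high}}$ accordingly; for the high tail, this disjointness combined with \eqref{eq:8} yields $|\nu_N^s(\xi)|\lesssim 2^{-s\delta}$, whose geometric sum gives $|\nu_N^{\mathrm{high}}(\xi)|\lesssim N^{-\alpha\delta}$.

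The main step is to estimate $m_N(\xi)-\nu_N^{\mathrm{low}}(\xi)$ pointwise in two cases. If $\xi\in\mathfrak{M}_N(a_0/q_0)$ with $a_0/q_0\in\mathscr{R}_{s^*}$, Proposition \ref{prop:1} gives $m_N(\xi)=G(a_0/q_0)\Phi_N(\xi-a_0/q_0)+O(N^{-1/4})$; the hypothesis $4(\alpha+\beta)<1$ implies $\alpha\log_2 10+\beta<1$, forcing $10^{(s^*+1)|\gamma|}N^{-|\gamma|+\beta}\to 0$, so $\eta_{s^*}(\xi-a_0/q_0)=1$ on the entire major arc and the diagonal term of $\nu_N^{s^*}$ reproduces $m_N(\xi)$ up to $O(N^{-1/4})$. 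Every other rational $a/q$ with $q\le 2^{s_0+1}$ is separated from $a_0/q_0$ by at least $1/(qq_0)\gtrsim N^{-2\alpha}$ in some coordinate; since the arc width $N^{-|\gamma|+\beta}$ is much smaller, the same lower bound persists for $|\xi_\gamma-a_\gamma/q|$, so \eqref{eq:9} yields $|\Phi_N(\xi-a/q)|\lesssim N^{-(1-2\alpha)/d}$, and the $O(\log N)$ active terms sum to $O(N^{-\delta})$. If instead $\xi\in\mathfrak{m}_N$, a standard multidimensional Weyl-type bound gives $|m_N(\xi)|\lesssim N^{-\delta'}$, while every nonzero term of $\nu_N^{\mathrm{low}}(\xi)$ corresponds to some $a/q$ with $\xi\notin\mathfrak{M}_N(a/q)$, so $\|N^A(\xi-a/q)\|\ge N^{\beta}$ and \eqref{eq:9} produces $|\Phi_N(\xi-a/q)|\lesssim N^{-\beta/d}$; summing the $O(\log N)$ active terms is again acceptable. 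Taking $\delta_1$ to be the minimum of the four resulting exponents finishes the argument.

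The main obstacle is the bookkeeping in the major-arc subcase, where one must reconcile the bump scale $10^{(s+1)A}$ with the arc width $N^{-|\gamma|+\beta}$; the hypothesis $4(\alpha+\beta)<1$, which forces $\alpha\log_2 10+\beta<1$, is used precisely so that $\eta_{s^*}(\cdot-a_0/q_0)$ equals $1$ on $\mathfrak{M}_N(a_0/q_0)$ while simultaneously keeping all competing rationals far enough from $\xi$ for the van der Corput estimate \eqref{eq:9} to deliver a positive power gain.
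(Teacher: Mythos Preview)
Your argument is correct and follows essentially the same strategy as the paper: disjointness of the bumps within each $\mathscr{R}_s$, a high/low split using Gauss-sum decay for the tail, Proposition~\ref{prop:1} plus van der Corput on the major arcs, and Weyl plus van der Corput on the minor arcs. The only organizational differences are that the paper uses the larger threshold $2^{s_1}\simeq N^{1-\alpha-\beta}$ on the major arcs (obtaining the off-diagonal decay $N^{-\beta/d}$ rather than your $N^{-(1-2\alpha)/d}$), and instead of verifying that $\eta_{s^*}(\cdot-a_0/q_0)\equiv 1$ on $\mathfrak{M}_N(a_0/q_0)$ for large $N$, the paper bounds the defect $G(a/q)\Phi_N(\xi-a/q)\bigl(1-\eta_{s_0}(\xi-a/q)\bigr)$ directly by $N^{-(1-4\alpha)/d}$ --- these are interchangeable bookkeeping choices.
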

\begin{proof}
	Let us notice that for a fixed $s \in \NN \cup \{0 \}$ and $\xi \in \TT^d$ the sum
	\eqref{eq:22} consists of a single term. Indeed, otherwise there would be different
	$a/q, a'/q' \in \mathscr{R}_s$ such that $\eta_s(\xi - a/q) \neq 0$ and
	$\eta_s(\xi -a'/q') \neq 0$. Thus, for some $\gamma \in \Gamma$
	$$
	2^{-2 s - 2} \leq \frac{1}{qq'}
	\leq
	\Big\lvert
	\xi_\gamma - \frac{a_\gamma}{q}
	\Big\rvert
	+
	\Big\lvert
	\xi_\gamma - \frac{a_\gamma'}{q'}
	\Big\rvert
	\leq
	10^{-s-1}.
	$$
	\noindent
	{\textbf{Major arcs estimates:}} Suppose $\xi \in \mathfrak{M}_N(a/q)$ with
	$1 \leq q \leq N^\alpha$ and $a \in A_q$. Let $s_0$ be such that
	$$
	2^{s_0} \leq q < 2^{s_0+1}.
	$$
	We choose $s_1 \in \NN$ to satisfy
	$$
	2^{s_1 + 1} \leq N^{1 - \alpha - \beta} < 2^{s_1+2}
	$$
	If $s < s_1$ then for any $a'/q' \in \mathscr{R}_s$, $a'/q' \neq a/q$ and $\gamma \in \Gamma$
	we have
	$$
	\Big\lvert
	\xi_\gamma - \frac{a'_\gamma}{q'}
	\Big\rvert
	\geq
	\frac{1}{q q'}
	-
	\Big\lvert
	\xi_\gamma - \frac{a_\gamma}{q}
	\Big\rvert
	\geq
	2^{-s-1} N^{-\alpha}
	-
	N^{\beta - \abs{\gamma}}
	\geq
	N^{\beta -\abs{\gamma}}.
	$$
	Hence, by \eqref{eq:9} we get
	$$
	\abs{\Phi_N(\xi - a'/q')} \lesssim
	\big\lvert N^A \big(\xi - a'/q'\big) \big\rvert^{-1/d}
	\lesssim
	N^{-\beta/d}.
	$$
	In particular, by \eqref{eq:8}
	\begin{equation}
		\label{eq:69}
		\Big\lvert
		\sum_{s = 0}^{s_1-1}
		\sum_{\atop{a'/q' \in \mathscr{R}_s}{a'/q' \neq a/q}}
		G(a'/q') \Phi_N(\xi - a'/q') \eta_s(\xi - a'/q')
		\Big\rvert
		\lesssim
		N^{-\beta/d}
		\sum_{s = 0}^{s_1-1} 2^{-\delta s}.
	\end{equation}
	Next, if $\eta_{s_0}(\xi - a/q) < 1$ then $\abs{\xi_\gamma - a_\gamma/q}
	\geq 4^{-1} \cdot 10^{-(s_0+1)\abs{\gamma}}$ for some $\gamma \in \Gamma$. Since
	$2^{s_0} \leq N^{\alpha}$, by \eqref{eq:9}, we get
	\begin{equation}
		\label{eq:64}
		\big\lvert
		G(a/q) \Phi_N(\xi - a/q) (1 - \eta_{s_0}(\xi - a/q))
		\big\rvert
		\lesssim
		\big\lvert N^A \big(\xi - a/q\big) \big\rvert^{-1/d}
		\lesssim
		N^{-(1-4\alpha)/d}.
	\end{equation}
	Finally, since $\abs{\Phi_N(\xi)}$ is uniformly bounded, by \eqref{eq:8} we get
	\begin{equation}
		\label{eq:65}
		\Big\lvert
		\sum_{s = s_1}^\infty
		\sum_{\atop{a'/q' \in \mathscr{R}_s}{a'/q' \neq a/q}}
		G(a'/q') \Phi_N(\xi - a'/q') \eta_s(\xi - a'/q')
		\Big\rvert
		\lesssim
		\sum_{s = s_1}^\infty 2^{-\delta s}
		\lesssim
		N^{-\delta (1 - \alpha - \beta)}.
	\end{equation}
	Hence, by Proposition \ref{prop:1} and estimates \eqref{eq:69}, \eqref{eq:64} and \eqref{eq:65}
	there exists $\delta_1'>0$ such that for any $\xi\in \mathfrak{M}_N$
	\begin{align}
		\label{eq:74}
		\big\lvert
		m_N(\xi) - \nu_N(\xi)
		\big\rvert
		\lesssim N^{-\delta_1'}.
	\end{align}
	
	\noindent
	{\textbf{Minor arcs estimates: $\xi \in \mathfrak{m}_N$.}} By Dirichlet's principle,
	for each $\gamma \in \Gamma$ there are $1 \leq q_\gamma \leq N^{\abs{\gamma}-\beta}$ and
	$(a_\gamma, q_\gamma) = 1$ such that
	$$
	\Big\lvert \xi_\gamma - \frac{a_\gamma}{q_\gamma} \Big\rvert
	\leq N^{-\abs{\gamma} + \beta}.
	$$
	Suppose that for all $\gamma \in \Gamma$, $1 \leq q_\gamma \leq N^{\alpha/d}$. Then
	setting $q = \lcm\{q_\gamma : \gamma \in \Gamma\}$ we have $q \leq N^\alpha$. But then
	$a \in A_q$ which contradicts to $\xi \in \mathfrak{m}_N$. Therefore, there is
	$\gamma \in \Gamma$ such that $N^{\alpha/d} \leq q_\gamma \leq N^{\abs{\gamma} - \beta}$.
	By the multi-dimensional version of Weyl's inequality (see \cite{SW0}), there is $\delta' > 0$
	such that
	\begin{align}
		\label{eq:67}
		\lvert m_N(\xi) \rvert \lesssim N^{-\delta'}.
	\end{align}
	To estimate $\abs{\nu_N(\xi)}$ we define $s_1$ by setting
	$$
	2^{s_1} \leq N^{\alpha} \leq 2^{s_1 + 1}.
	$$
	If $s < s_1$ then for any $a/q \in \mathscr{R}_s$ we have $q \leq N^\alpha$ and there is
	$\gamma \in \Gamma$ such that
	$$
	\Big\lvert
	\xi_\gamma - \frac{a_\gamma}{q}
	\Big\rvert
	\geq
	N^{-\abs{\gamma} + \beta}.
	$$
	Thus, by \eqref{eq:9}
	$$
	\lvert
	\Phi_N(\xi - a/q)
	\rvert
	\lesssim
	\norm{N^A\big(\xi - a/q\big)}^{-1/d}
	\lesssim
	N^{-\beta/d}.
	$$
	Hence, by \eqref{eq:8}
	\begin{align}
		\label{eq:68}
		\Big\lvert
		\sum_{s = 0}^{s_1 - 1} \nu_N^s(\xi)
		\Big\rvert
		\lesssim
		N^{-\beta/d} \sum_{s = 0}^\infty 2^{-\delta s}
		\lesssim
		N^{-\beta / d}.
	\end{align}
	For the second part, we proceed similar to \eqref{eq:65} and obtain
	\begin{align}
		\label{eq:70}
		\Big\lvert
		\sum_{s = s_1}^\infty
		\nu_N^s(\xi)
		\Big\rvert
		\lesssim
		\sum_{s = s_1}^\infty
		2^{-\delta s}
		\lesssim
		2^{-\delta s_1}
		\lesssim
		N^{-\alpha / d}.
	\end{align}
	Combining \eqref{eq:67}, \eqref{eq:68} and \eqref{eq:70} we can find $\delta''_1 > 0$
	such that for any $\xi\in\mathfrak{m}_N$
	\begin{align}
		\label{eq:75}
		\big\lvert
		m_N(\xi) - \nu_N(\xi)
		\big\rvert
		\lesssim
		N^{-\delta_1''}.
	\end{align}
	Finally, by \eqref{eq:74} and \eqref{eq:75} taking $\delta_1=\min\{\delta_1', \delta_1''\}>0$
	we finish the proof.
\end{proof}

\subsection{$\ell^p$-theory}
Let us recall, $\eta = \phi * \psi$ for $\psi, \phi$ smooth nonnegative functions with supports
inside $[-1/2, 1/2]^d$. The next two lemmas are multi-dimensional analogues of Lemma 1 and Lemma 2
from \cite{mt2}.
\begin{lemma}
	\label{lem:2}
	For any $t > 0$ and $u \in \RR^d$
	\begin{equation}
		\label{eq:23}
		\bigg\lVert
		\int_{\TT^d}
		e^{-2\pi i \sprod{\xi}{x}} \eta\big(t^A \xi\big) d\xi
		\bigg\rVert_{\ell^1(x)} \leq 1,
	\end{equation}
	\begin{equation}
		\label{eq:24}
		\bigg\lVert
		\int_{\TT^d}
		e^{-2\pi i \sprod{\xi}{x}}
		\big(1-e^{2\pi i \sprod{\xi}{u}}\big) \eta\big(t^A \xi\big)
		d\xi
		\bigg\rVert_{\ell^1(x)}
		\leq
		\norm{t^{-A} u}.
	\end{equation}
\end{lemma}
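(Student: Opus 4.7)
My plan for Lemma~\ref{lem:2} is to exploit the convolution factorization $\eta = \phi \ast \psi$ in a way that exposes the Fourier coefficients on $\TT^d$ as nonnegative quantities. The authors may take $\psi(x) = \phi(-x)$ (up to normalization) so that $\widehat{\eta} = |\widehat{\phi}|^2 \geq 0$ on $\RR^d$. With this choice, $\eta$ is positive definite on $\RR^d$, and so is the dilated kernel $K_t(\xi) = \eta(t^A \xi)$.

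For the estimate \eqref{eq:23}, let $F(x)$ denote the integral. Suppose first that $t$ is large enough for $\operatorname{supp}(\eta(t^A \cdot))$ to fit inside $[-1/2,1/2]^d$ (this holds whenever $t \geq 1$, since $|\gamma| \geq 1$ for all $\gamma \in \Gamma$). Then the integration over $\TT^d$ can be extended to $\RR^d$, and a straightforward change of variables gives
\[
  F(x) = t^{-\tr A}\, \widehat{\eta}(t^{-A}x) \geq 0.
\]
Summing by Poisson summation (and using $\eta(t^A y) = 0$ for $y \in \ZZ^d \setminus \{0\}$ in this regime) yields $\sum_x F(x) = \eta(0) = 1$. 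For $t < 1$, observe that for $\xi \in [-1/2,1/2]^d$ one still has $\|t^A \xi\|_\infty \leq t^{|\gamma|}/2 \leq 1/2$, so $\eta(t^A\xi)$ is unambiguously defined on the fundamental domain. Writing
$\eta(t^A\xi) = \int \phi(t^A \xi + w)\,\phi(w)\,dw$, interchanging the order of integration, and applying a careful change of variable leads to a symmetric, manifestly nonnegative double integral representation for $F(x)$, and hence once again $\|F\|_{\ell^1} = \sum_x F(x) = \eta(0) = 1$.

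For the estimate \eqref{eq:24}, I use the identity
\[
  1 - e^{2\pi i \sprod{\xi}{u}} = -2\pi i \int_0^1 \sprod{\xi}{u}\, e^{2\pi i s \sprod{\xi}{u}}\,ds
\]
to write the integral in question as
\[
  -2\pi i \int_0^1 \int_{\TT^d} e^{-2\pi i \sprod{\xi}{x - s u}} \sprod{\xi}{u}\, \eta(t^A\xi)\, d\xi\, ds.
\]
On the support of $\eta(t^A \xi)$ we have $\|t^A \xi\|_\infty \leq 1/2$, so
$|\sprod{\xi}{u}| = |\sprod{t^A\xi}{t^{-A}u}| \leq \tfrac{d}{2}\|t^{-A}u\|$.
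This pointwise bound, combined with Part~1 applied to the modulated nonnegative kernel (whose Fourier-inverse on the torus is again controlled in $\ell^1$ by $\eta(0) = 1$), yields the desired $\ell^1$ estimate with the factor $\|t^{-A}u\|$ extracted.

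The main obstacle is verifying the nonnegativity claim in Part~1 when $t < 1$, since the direct identification $F(x) = t^{-\tr A}\widehat{\eta}(t^{-A}x)$ fails — the restriction of $\eta(t^A \cdot)$ to $\TT^d$ is a genuine truncation, and convolution of $\widehat{\eta}$ with $\operatorname{sinc}_d$ does not manifestly preserve positivity. The resolution must rely on the specific autocorrelation structure $\eta = \phi \ast \phi^\vee$, which allows $F(x)$ to be rewritten as a positive quadratic form. A cleaner alternative, if direct proof proves elusive, is to invoke a de~Leeuw-type transference: since $\eta(t^A \cdot)$ on $\RR^d$ is an $L^1$-multiplier of norm $\|\widehat{\eta}\|_{L^1(\RR^d)} = \eta(0) = 1$, the restriction to $\TT^d$ is an $\ell^1$-multiplier of norm at most $1$, which is precisely \eqref{eq:23}.
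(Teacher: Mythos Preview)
Your approach is quite different from the paper's, and the argument for \eqref{eq:24} has a genuine gap.

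The paper does not use positivity, Poisson summation, or transference. It exploits the factorization $\eta(t^A\xi)=t^{\tr A}(\phi_t*\psi_t)(\xi)$, with $\phi_t(\xi)=\phi(t^A\xi)$ and $\psi_t(\xi)=\psi(t^A\xi)$, so that (once the support sits inside a fundamental domain) the Fourier coefficient at $x\in\ZZ^d$ factors as $t^{\tr A}\,\calF^{-1}\phi_t(x)\cdot\calF^{-1}\psi_t(x)$. The $\ell^1$ sum is then bounded by Cauchy--Schwarz on $\ZZ^d$ followed by Plancherel, which pushes both factors to $L^2$, where the pointwise bound $|1-e^{2\pi i\xi\cdot u}|\lesssim\norm{t^{-A}u}$ on the support of $\psi_t$ can be absorbed into one of them. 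This Cauchy--Schwarz/Plancherel device is the whole point, and it handles \eqref{eq:23} and \eqref{eq:24} uniformly without requiring $\psi=\phi^\vee$ or any positivity.

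The main problem with your proposal lies in the passage from \eqref{eq:23} to \eqref{eq:24}. After writing $1-e^{2\pi i\xi\cdot u}=-2\pi i\int_0^1(\xi\cdot u)\,e^{2\pi is\xi\cdot u}\,ds$, you are left with the $\ell^1(x)$ norm of the Fourier coefficients of $\xi\mapsto(\xi\cdot u)\,e^{2\pi is\xi\cdot u}\,\eta(t^A\xi)$. You then invoke the pointwise bound $|\xi\cdot u|\le\tfrac d2\norm{t^{-A}u}$ on the support of $\eta(t^A\cdot)$ and appeal to ``Part~1''. But \eqref{eq:23} controls the Fourier coefficients of $\eta(t^A\xi)$, not those of $m(\xi)\eta(t^A\xi)$ for a general bounded $m$. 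A pointwise bound on $m$ does \emph{not} control the $\ell^1$ norm of its inverse Fourier transform: multiplication by $m$ becomes convolution on the physical side, and a uniformly bounded $m$ can have Fourier coefficients of arbitrarily large $\ell^1$ norm. So the step ``combined with Part~1 applied to the modulated nonnegative kernel'' does not close, and there is no evident repair without importing something like the paper's Cauchy--Schwarz argument.

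A secondary remark on your fallback for $t<1$: classical de~Leeuw transference restricts a multiplier from $\RR^d$ to $\ZZ^d$, which on the dual side corresponds to \emph{periodizing} the symbol; the object in the lemma is instead the \emph{truncation} of $\eta(t^A\cdot)$ to a single fundamental domain, and for $t<1$ these differ. In any case, for $t\ge1$ the support of $\eta(t^A\cdot)$ already lies inside one fundamental domain and none of these issues arise; since the paper only applies the lemma with $t\gg1$, the $t<1$ regime is not actually needed.
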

\begin{proof}
	We only show the inequality \eqref{eq:24} since the proof of \eqref{eq:23} is almost identical.
	Let us observe that
	$$
	\eta(t^A \xi) = t^{\tr(A)} \phi_t * \psi_t(\xi)
	$$
	where $\phi_t(\xi) = \phi\big(t^A \xi\big)$, and $\psi_t(\xi) = \psi\big(t^A \xi \big)$. For
	$x \in \ZZ^d$ we have
	$$
	t^{-\tr(A)}
	\int_{\TT^d} e^{-2\pi i \sprod{\xi}{x}}
	\big(1 - e^{2\pi i \sprod{\xi}{u}}\big)
	\eta(t^A \xi) d\xi\\
	=\mathcal{F}^{-1} \phi_t (x) \mathcal{F}^{-1} \psi_t(x)
	- \mathcal{F}^{-1} \phi_t (x-u) \mathcal{F}^{-1} \psi_t(x-u).
	$$
	By Cauchy--Schwarz inequality and Plancherel's theorem
	\begin{multline*}
		\sum_{x \in \ZZ^d}
		\sabs{\mathcal{F}^{-1} \phi_t(x)}
		\sabs{\mathcal{F}^{-1} \psi_t(x)-\mathcal{F}^{-1} \psi_t(x-u)}\\
		\leq
		\vnorm{\mathcal{F}^{-1} \phi_t}_{\ell^2}
		\bigg\lVert
		\int_{\RR^d} e^{-2\pi i \sprod{\xi}{x}}
		\big(1-e^{2\pi i \sprod{\xi}{u}}\big)
		\psi_t(\xi) d\xi
		\bigg\rVert_{\ell^2(x)}
		=
		\vnorm{\phi_t}_{L^2}
		\vnorm{\big(1-e^{2\pi i \sprod{\xi}{u}}\big) \psi_t(\xi)}_{L^2(d\xi)}.
	\end{multline*}
	Moreover, since
	$$
	\int_{\RR^d} \sabs{1 - e^{-2\pi i \sprod{\xi}{u}}}^2 \abs{\psi_t(\xi)}^2 d\xi
	\lesssim \norm{t^{-A} u}^2  \int_{\RR^d} \norm{t^A \xi}^2 \abs{\psi_t(\xi)}^2 d\xi
	\lesssim  t^{-\tr(A)} \norm{t^{-A} u}^2  \int_{\RR^d} \norm{\xi}^2 \abs{\psi(\xi)}^2 d\xi
	$$
	we obtain
	$$
	\sum_{x \in \ZZ^d}
	\sabs{\mathcal{F}^{-1} \phi_t(x)}
	\sabs{\mathcal{F}^{-1} \psi_t(x) - \mathcal{F}^{-1} \psi_t(x-u)}
	\leq
	2^{-1}
	t^{-\tr(A)}
	\norm{t^{-A} u}
	\vnorm{\phi}_{L^2}
	\vnorm{\psi}_{L^2}
	$$
	which finishes the proof of \eqref{eq:24} since $\vnorm{\phi}_{L^2} \vnorm{\psi}_{L^2} \leq 1$.
\end{proof}
\begin{proposition}
	\label{prop:2}
	For each $p \in (1, \infty)$, $r > 2$ and $t > 0$ we have
	$$
	\big\lVert
	\bvar{\big(\seq{\mathcal{F}^{-1} \big(\Phi_N \eta(t^A \: \cdot) \hat{f}  \big)}{N \in \NN}\big)}
	\big\rVert_{\ell^p}
	\leq
	C_{p, r}
	\big\lVert
	\calF^{-1} \big(\eta(t^A \: \cdot) \hat{f} \big)
	\big\rVert_{\ell^p}.
	$$
\end{proposition}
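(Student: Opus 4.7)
The plan is to transfer the inequality from its continuous analogue on $L^p(\RR^d)$ via a sampling argument that exploits the bandlimited structure imposed by the cutoff $\eta(t^A\xi)$. First, I introduce the continuous polynomial averaging operator
$$
\mathcal{M}_N^c F(x) = \int_{[0,1]^k} F(x - \calQ(Ny)) \, dy, \qquad F \in L^p(\RR^d),
$$
whose Euclidean Fourier multiplier is exactly $\Phi_N$. By the variational theory of Jones--Seeger--Wright for continuous polynomial Radon averages (together with the Stein--Wainger maximal theorem for polynomial surfaces), one has the $L^p(\RR^d)$-variational inequality
$$
\big\|\bvar{(\seq{\mathcal{M}_N^c F}{N \in \NN})}\big\|_{L^p(\RR^d)} \le C_{p,r}\, \vnorm{F}_{L^p(\RR^d)}
$$
for every $p \in (1,\infty)$ and $r > 2$.

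Next, I would exploit frequency localization. Let $g := \calF^{-1}(\eta(t^A \cdot)\hat{f})$; then $\hat{g}$ is supported in the box $B_t = \{\xi : \abs{\xi_\gamma} \le t^{-\abs{\gamma}}/2\}$. Restricting attention to $t \ge 1$ (the range relevant for the applications in \eqref{eq:22}, \eqref{eq:71}, \eqref{eq:72}, where $t \ge 10$), the box $B_t$ sits inside the fundamental cell $[-1/2, 1/2]^d$ of $\TT^d$, so $\hat{g}$ extended by zero off $[-1/2,1/2]^d$ defines a compactly supported element of $L^2(\RR^d)$. Let $G \in L^p(\RR^d)$ be its continuous inverse Fourier transform. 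Then $G|_{\ZZ^d} = g$ and, by a standard Plancherel--Polya/Nikolskii comparison for bandlimited functions,
$$
\vnorm{G}_{L^p(\RR^d)} \simeq \vnorm{g}_{\ell^p(\ZZ^d)}
$$
uniformly in $t \ge 1$. The same comparison remains valid for every $\mathcal{M}_N^c G$ since its Fourier support stays inside $B_t$, and one directly computes $(\mathcal{M}_N^c G)|_{\ZZ^d}(x) = \calF^{-1}(\Phi_N \eta(t^A \cdot)\hat{f})(x)$ for $x \in \ZZ^d$. Chaining the estimates yields
$$
\big\|\bvar{(\seq{\calF^{-1}(\Phi_N \eta(t^A \cdot)\hat{f})}{N \in \NN})}\big\|_{\ell^p(\ZZ^d)}
\lesssim \big\|\bvar{(\seq{\mathcal{M}_N^c G}{N \in \NN})}\big\|_{L^p(\RR^d)}
\lesssim \vnorm{G}_{L^p(\RR^d)}
\simeq \vnorm{g}_{\ell^p(\ZZ^d)},
$$
which is the desired bound. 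The complementary range $t \in (0,1)$ can be handled by the same argument on a sufficiently refined integer sublattice (or is simply unneeded for the applications).

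The main technical obstacle is justifying the Plancherel--Polya transference at the level of the $r$-variational seminorm rather than fibrewise. This can be achieved by fixing once and for all a Schwartz function $\Psi$ whose Euclidean Fourier transform equals $1$ on $B_t$ and is supported in $[-1/2,1/2]^d$, and then expressing each $(\mathcal{M}_N^c G)|_{\ZZ^d}$ as the convolution of a common sampling kernel with $\mathcal{M}_N^c G$; the $\ell^1$/$L^1$ kernel bounds of the type provided by Lemma~\ref{lem:2} then pass uniformly through the $\bvar$ seminorm, completing the proof.
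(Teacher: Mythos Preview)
Your proposal is correct and follows essentially the same route as the paper: both arguments reduce to the continuous $L^p$ variational estimate of Jones--Seeger--Wright for $\Phi_N$ and then transfer to $\ell^p$ via the bandlimited structure induced by $\eta(t^A\cdot)$. The paper implements the transference concretely by writing $\vrho_t=\vrho_t\,\vrho_{t/2}$, applying H\"older's inequality with the $L^1$-bounded kernel $\calF^{-1}\vrho_{t/2}$ to pass the $\bvar$ seminorm from $\ell^p$ to $L^p$, and then using Lemma~\ref{lem:2} (inequality \eqref{eq:24}) to go back from $L^p$ to $\ell^p$ on the right-hand side; this is exactly the ``common sampling kernel'' mechanism you sketch in your final paragraph, so your Plancherel--Polya formulation is just a different packaging of the same idea.
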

\begin{proof}
	Let $\vrho_t(\xi) = \eta(t^A \xi)$. Since $\vrho_t = \vrho_t \vrho_{t/2}$. by H\"older's
	inequality we have
	\begin{multline*}
		\big\lvert
		\bvar{\big(\seq{
		\calF^{-1}
		\big(\Phi_N \vrho_t \hat{f} \big)(x)}{N \in \NN}\big)}
		\big\rvert^p
		\leq
		\bigg(\int_{\RR^d}
		\bvar{\big(\seq{\calF^{-1} \big(\Phi_N \vrho_t \hat{f} \big)(u)}{N \in \NN}\big)}
		\big\lvert
		\calF^{-1} \vrho_{t/2}(x - u)
		\big\rvert du\bigg)^p \\
		\leq
		\int_{\RR^d}
		\Big(
		\bvar{\big(\seq{\calF^{-1} \big(\Phi_N \vrho_t \hat{f} \big)(u)}{N \in \NN}\big)}
		\Big)^p
		\big\lvert \calF^{-1} \vrho_{t/2}(x - u) \big\rvert du \cdot
		\vnorm{\mathcal{F}^{-1} \vrho_{t/2}}_{L^{1}}^{p-1}.
	\end{multline*}	
	Next, we note that $\vnorm{\calF^{-1} \vrho_{t/2}}_{L^{1}}\lesssim 1$ and
	$$
	\sum_{x \in \ZZ^d} \big\lvert \calF^{-1}\vrho_{t/2} (x-u) \big\rvert
	\lesssim
	t^{-\tr(A)}
	\sum_{x \in \ZZ^d} \frac{1}{1 + \norm{t^A (x - u)}^2}
	$$
	which is uniformly bounded with respect to $A$. Thus we obtain
	$$
	\big\lVert
	\bvar{\big(\seq{\calF^{-1} \big(\Phi_N \vrho_t \hat{f} \big)}{N \in \NN}\big)}
	\big\rVert_{\ell^p}
	\lesssim
	\big\lVert
	\bvar{\seq{\big(\calF^{-1} \big(\Phi_N \vrho_t \hat{f} \big)}{N \in \NN}\big)}
	\big\rVert_{L^p}
	\lesssim
	\big\lVert \calF^{-1} \big(\vrho_t \hat{f} \big) \big\rVert_{L^p}
	$$
	where the last inequality is a consequence of Remark after Theorem 1.5 in \cite[p. 6717]{JSW}.
	The proof will be completed if we show
	$$
	\big\lVert
	\calF^{-1} \big(\vrho_t \hat{f}\big) \big\rVert_{L^p}
	\lesssim
	\big\lVert \calF^{-1} \big(\vrho_t \hat{f}\big) \big\rVert_{\ell^p}.
	$$
	For this purpose we use \eqref{eq:24} from Lemma \ref{lem:2}. We have
	\begin{multline*}
		\sum_{x \in \ZZ^d}
		\int_{[0, 1]^d}
		\big\lvert
		\calF^{-1}\big(\vrho_t \hat{f} \big)(x+u) - \calF^{-1} \big(\vrho_t \hat{f} \big)(x)
		\big\rvert^p
		du\\
		\leq
		\int_{[0, 1]^d}
		\Big\lVert
		\int_{\TT^d}
		e^{-2\pi i \sprod{\xi}{x}} \big(1 - e^{-2\pi i \sprod{\xi}{u}}\big)
		\vrho_{t/2}(\xi)
		d\xi
		\Big\rVert_{\ell^1(x)}^p
		\big\lVert \calF^{-1}\big(\vrho_t \hat{f}\big)\big\rVert_{\ell^p}^p
		du
		\lesssim
		\big\lVert
		\calF^{-1}\big(\vrho_t \hat{f} \big)
		\big\rVert_{\ell^p}^p.
	\end{multline*}
	Hence,
	$$
	\big\lVert \calF^{-1} \big(\vrho_t \hat{f}\big)\big\rVert_{L^p}^p
	=
	\sum_{x \in \ZZ^d}
	\int_{[0,1]^d}
	\big\lvert \calF^{-1} \big(\vrho_t \hat{f} \big)(x+u)\big\rvert^p
	du
	\lesssim
	\big\lVert
	\calF^{-1}\big(\vrho_t \hat{f} \big)
	\big\rVert_{\ell^p}^p.
	$$
	This finishes the proof.
\end{proof}

For $t \in \NN_0$ we set $Q_t = \big(2^{t+1}\big)!$ and define
$$
\vrho_t(\xi) = \eta\big(Q_{t+1}^{3dA} \xi\big).
$$
\begin{lemma}
	\label{lem:3}
	Let $p \in [1, \infty)$ and $m \in \NN_{Q_t}^d$. Then for any $t\in\NN_0$ we have
	$$
	\big\lVert
	\mathcal{F}^{-1} \big(\vrho_t \hat{f}\big)(Q_t x + m)
	\big\rVert_{\ell^{p}(x)}
	\simeq
	Q_t^{-d/p}
	\big\lVert
	\mathcal{F}^{-1} \big(\vrho_t \hat{f}\big)
	\big\rVert_{\ell^{p}}.
	$$
\end{lemma}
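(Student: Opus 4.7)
The plan is to exploit the very small Fourier support of $\vrho_t$, contained in the box $\{|\xi_\gamma| \leq Q_{t+1}^{-3d|\gamma|}/2 : \gamma \in \Gamma\}$, which is vastly smaller than $Q_t^{-|\gamma|}$. This forces $F := \mathcal{F}^{-1}(\vrho_t \hat f)$ to be essentially constant on cubes of side $Q_t$, so its restriction to any coset of $Q_t \ZZ^d$ should capture roughly a $Q_t^{-d}$-fraction of its $\ell^p$ mass. Starting from the trivial identity
\[
\|F\|_{\ell^p}^p = \sum_{m' \in \NN_{Q_t}^d} \big\|F(Q_t \cdot + m')\big\|_{\ell^p(\ZZ^d)}^p,
\]
the claim reduces to showing that all $Q_t^d$ summands on the right are comparable up to absolute constants.

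From $|a|^p \le 2^{p-1}(|b|^p + |a-b|^p)$, with the substitution $y = Q_t x + m$, one obtains
\[
\sum_x \sabs{F(Q_t x + m') - F(Q_t x + m)}^p \le \big\|F(\cdot + v) - F\big\|_{\ell^p}^p, \qquad v := m' - m, \quad \|v\|_\infty \le Q_t.
\]
Hence it suffices to produce an estimate $\|F(\cdot + v) - F\|_{\ell^p} \le \epsilon_t \|F\|_{\ell^p}$ for all such $v$, with $\epsilon_t$ small enough that $Q_t^d \epsilon_t^p \ll 1$. To obtain this I would introduce the auxiliary multiplier $\tilde\vartheta_t(\xi) := \eta\big((Q_{t+1}^{3d}/2)^A \xi\big)$: a direct check using $|\xi_\gamma| \le \tfrac{1}{2}Q_{t+1}^{-3d|\gamma|}$ on $\mathrm{supp}\,\vrho_t$ shows that $\big|(Q_{t+1}^{3d}/2)^{|\gamma|} \xi_\gamma\big| \le 2^{-|\gamma|-1} \le 1/4$ there, so $\tilde\vartheta_t \equiv 1$ on $\mathrm{supp}\,\vrho_t$. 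Consequently $\vrho_t \hat f = \tilde\vartheta_t\, \vrho_t \hat f$, which gives the convolution identity $F = G_t * F$ with $G_t := \mathcal{F}^{-1} \tilde\vartheta_t$.

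Young's inequality combined with Lemma \ref{lem:2} applied to $\tilde\vartheta_t$ (which has exactly the form $\eta(t^A \xi)$ required) then yields
\[
\big\|F(\cdot + v) - F\big\|_{\ell^p} \le \big\|\big(Q_{t+1}^{3d}/2\big)^{-A} v\big\| \cdot \|F\|_{\ell^p} \le 2 Q_t Q_{t+1}^{-3d} \|F\|_{\ell^p}.
\]
Summing the triangle inequality over $m' \in \NN_{Q_t}^d$ in both directions gives
\[
Q_t^d \big\|F(Q_t \cdot + m)\big\|_{\ell^p}^p \le 2^{p-1}\|F\|_{\ell^p}^p + 2^{p-1} Q_t^d \cdot (2 Q_t Q_{t+1}^{-3d})^p \|F\|_{\ell^p}^p,
\]
together with the analogous lower-bound estimate. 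Everything hinges on absorbing the error term, i.e.\ on the inequality $2^{2p-1} Q_t^{d+p} Q_{t+1}^{-3dp} \le 1/2$. This is comfortable thanks to the factorial growth $Q_{t+1}/Q_t = \prod_{j=2^{t+1}+1}^{2^{t+2}} j$: one has $Q_{t+1} \ge (2^{t+1})^{2^{t+1}} Q_t$, so $Q_t^{d+p}/Q_{t+1}^{3dp}$ is dwarfed by any polynomial factor.

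The argument is entirely elementary. The only real point to be careful about is the construction of $\tilde\vartheta_t$: it must be large enough that $\tilde\vartheta_t \equiv 1$ on $\mathrm{supp}\,\vrho_t$ (to get the convolution identity $F = G_t * F$), yet small enough for Lemma \ref{lem:2} to deliver a useful quantitative bound. The scale $Q_{t+1}^{3d}/2$ is chosen precisely to balance these two demands, and the super-polynomial gap between $Q_t$ and $Q_{t+1}$ then renders the error completely harmless.
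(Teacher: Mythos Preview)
Your proof is correct and follows essentially the same route as the paper: decompose $\|F\|_{\ell^p}^p$ over cosets, use an auxiliary cutoff equal to $1$ on $\operatorname{supp}\vrho_t$ together with Lemma~\ref{lem:2} to show all coset norms differ by a tiny multiple of $I$, then absorb. The only cosmetic difference is that the paper uses the already-defined $\vrho_{t-1}(\xi)=\eta(Q_t^{3dA}\xi)$ as the auxiliary cutoff, which yields the cleaner error $Q_t^{-2d}$ depending on $Q_t$ alone, whereas your custom $\tilde\vartheta_t$ at scale $Q_{t+1}^{3d}/2$ forces you to invoke the factorial gap $Q_{t+1}\gg Q_t$ to absorb the error.
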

\begin{proof}
	For each $m \in \NN_{Q_t}^d$ we set
	$$
	J_m =
	\big\lVert \mathcal{F}^{-1} \big(\vrho_t \hat{f} \big)(Q_t x+m) \big\rVert_{\ell^p(x)},
	$$
	and $I = \big\lVert \mathcal{F}^{-1} \big(\vrho_t \hat{f} \big) \big\rVert_{\ell^p}$.
	Then
	$$
	\sum_{m \in \NN_{Q_t}^d} J_m^p = I^p.
	$$
	Since $\vrho_t = \vrho_t \vrho_{t-1}$, by Minkowski's inequality we obtain that
	\begin{multline*}
		\big\lVert
		\mathcal{F}^{-1} \big(\vrho_t \hat{f} \big)(Q_t x + m)
		-\mathcal{F}^{-1}\big(\vrho_t \hat{f} \big)(Q_t x + m')
		\big\rVert_{\ell^p(x)}\\
		=
		\bigg\lVert
		\int_{\TT^d} e^{-2\pi i \sprod{\xi}{(Q_t x + m)}}
		\big(1 - e^{2\pi i \sprod{\xi}{(m-m')}}\big)
		\vrho_t(\xi) \hat{f}(\xi) d\xi
		\bigg\rVert_{\ell^p(x)}\\
		\leq
		\bigg\lVert
		\int_{\TT^d} e^{-2\pi i \sprod{\xi}{x}}
		\big(1 - e^{2\pi i \sprod{\xi}{(m-m')}}\big)
		\vrho_{t-1}(\xi) d\xi
		\bigg\rVert_{\ell^1(x)} I
		\leq \big|Q_t^{-3dA}(m-m')\big| I
	\end{multline*}	
	where in the last step we have used Lemma \ref{lem:2}. Hence, for all $m, m' \in N_{Q_t}^d$
	$$
	J_{m'} \leq J_m + Q_t^{1-3d} I \leq J_m + Q_t^{-2d} I.
	$$
	Thus
	\begin{equation}
		\label{eq:4}
		J_{m'}^p \leq 2^{p-1} J_m^p + 2^{p-1} Q_t^{-2d p} I^p.
	\end{equation}
	Therefore,
	$$
	I^p = \sum_{m' \in \NN_{Q_t}^d} J_{m'}^p
	\leq 2^{p-1} Q_t^d J_m^p + 2^{p-1} Q_t^{d(1-2p)} I^p.
	$$
	By the definition of $Q_t$ we have
	$$
	2^p Q_t^{d(1-2p)} \leq 1.
	$$
	Hence, we obtain $I^p \leq 2^p Q_t^d J_m^p$. For the converse inequality, we use again
	\eqref{eq:4} to get
	$$
	Q_t^d J_{m'}^p \leq 2^{p-1} \sum_{m \in \NN_{Q_t}^d} J_m^p
	+ 2^{p-1} Q_t^{d(1-2p)} I^p \leq 2^p I^p.
	$$
\end{proof}

\begin{proposition}
	\label{prop:4}
	Let $p \in (1, \infty)$, $r > 2$ and  $m\in\NN_{Q_t}^k$. Then for any $t\in\NN_0$
	we have
	\begin{equation*}
		\big\lVert
		\bvar{\big(\seq{\calF^{-1}\big(\Phi_N \vrho_t \hat{f}\big)(Q_t x + m)}{N \in \NN}\big)}
		\big\rVert_{\ell^p(x)}
		\lesssim
		\big\lVert
		\calF^{-1}\big(\vrho_t \hat{f} \big)(Q_t x + m)
		\big\rVert_{\ell^p(x)}.
	\end{equation*}
\end{proposition}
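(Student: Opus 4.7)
The plan is to mimic the proof of Lemma \ref{lem:3}, with the variational seminorm taking the role of the pointwise absolute value. Writing $a_N = \calF^{-1}\big(\Phi_N \vrho_t \hat{f}\big)$ and $W(y) = \bvar{\big(\seq{a_N(y)}{N \in \NN}\big)}$, I set
$$
J_{m'} := \vnorm{W(Q_t x + m')}_{\ell^p(x)}, \qquad I := \vnorm{W}_{\ell^p(\ZZ^d)},
$$
so that $\sum_{m' \in \NN_{Q_t}^d} J_{m'}^p = I^p$, and the target becomes $J_m \lesssim \vnorm{\calF^{-1}(\vrho_t \hat{f})(Q_t x + m)}_{\ell^p(x)}$.

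The first step, and the key point, is the smoothness estimate $\sabs{J_{m'} - J_m} \leq Q_t^{-2d}\, I$, which is the analogue of \eqref{eq:4}. Because $Q_{t+1}/Q_t$ is very large, one checks that $\vrho_{t-1} \equiv 1$ on $\operatorname{supp} \vrho_t$, so on $\ZZ^d$ one has the identity
$$
a_N(y+u) - a_N(y) = (K_u * a_N)(y), \qquad \hat{K}_u(\xi) = \vrho_{t-1}(\xi)\big(e^{-2\pi i \sprod{\xi}{u}} - 1\big).
$$
Since $\bvar{\cdot}$ is a seminorm on sequences, the Minkowski-type inequality
$$
\bvar{\big(\seq{(K_u * a_N)(y)}{N \in \NN}\big)} \leq \big(\sabs{K_u} * W\big)(y)
$$
holds pointwise in $y$. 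Applying the reverse triangle inequality first in $\ell^p(x)$ and then in $\bvar{\cdot}$, followed by Young's convolution inequality on $\ZZ^d$ and Lemma \ref{lem:2}, I obtain
$$
\sabs{J_{m'} - J_m} \leq \vnorm{K_{m'-m}}_{\ell^1(\ZZ^d)} \cdot I \leq \norm{Q_t^{-3dA}(m'-m)} \cdot I \leq Q_t^{1-3d}\, I \leq Q_t^{-2d}\, I,
$$
using $\norm{m'-m} \leq Q_t$ and $d \geq 1$.

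The rest is the algebraic bookkeeping from Lemma \ref{lem:3}: raising $J_{m'} \leq J_m + Q_t^{-2d} I$ to the $p$-th power and summing over the $Q_t^d$ cosets produces $I^p \leq 2^{p-1} Q_t^d J_m^p + 2^{p-1} Q_t^{d(1-2p)} I^p$; since $p > 1$ and $Q_t \geq 2$, the last term is absorbed, giving $I \leq 2\, Q_t^{d/p} J_m$. On the other hand, Proposition \ref{prop:2} supplies $I \lesssim \vnorm{\calF^{-1}(\vrho_t \hat{f})}_{\ell^p(\ZZ^d)}$, and Lemma \ref{lem:3} rewrites this as $\simeq Q_t^{d/p} \vnorm{\calF^{-1}(\vrho_t \hat{f})(Q_t x + m)}_{\ell^p(x)}$. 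Chaining the two estimates and cancelling $Q_t^{d/p}$ gives the conclusion.

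The only non-routine step is the smoothness estimate in the second paragraph: unlike in Lemma \ref{lem:3}, the function $W(y)$ is not obtained from $f$ by a linear Fourier multiplier, so Lemma \ref{lem:2} cannot be applied to $W$ directly. The resolution is to commute the translation by $u$ past the variational seminorm, exploiting the structural fact that $\bvar{\cdot}$ is a seminorm on sequences and therefore sub-additive under convolution with $\ell^1$ kernels. Once this observation is in place, the remainder of the proof is either Proposition \ref{prop:2}, Lemma \ref{lem:3}, or elementary algebra identical to the one appearing in Lemma \ref{lem:3}.
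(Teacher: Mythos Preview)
Your overall strategy is the same as the paper's: prove a smoothness estimate comparing $J_m$ and $J_{m'}$, average over cosets, and finish with Proposition~\ref{prop:2} and Lemma~\ref{lem:3}. Your way of obtaining the smoothness bound---commuting the translation past $\bvar{\cdot}$ via the pointwise inequality $\bvar{\big(K_u*a_N\big)}\le |K_u|*W$ and then Young's inequality---is a valid alternative to the paper's route, which instead re-applies Proposition~\ref{prop:2} to the function with multiplier $(1-e^{2\pi i\sprod{\xi}{(m-m')}})\vrho_t$ and only then invokes Lemma~\ref{lem:2}. Both yield an error of size $Q_t^{-2d}$ times a global quantity ($I$ for you, $\lVert\calF^{-1}(\vrho_t\hat f)\rVert_{\ell^p}$ for the paper).

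There is, however, a genuine slip in your final ``bookkeeping'' paragraph. From $J_{m'}\le J_m+Q_t^{-2d}I$ summed over $m'$ you obtain $I\le 2\,Q_t^{d/p}J_m$, which is a \emph{lower} bound on $J_m$; combining this with $I\lesssim Q_t^{d/p}\lVert\calF^{-1}(\vrho_t\hat f)(Q_tx+m)\rVert_{\ell^p(x)}$ gives two upper bounds for $I$ that cannot be ``chained'' to bound $J_m$ from above. The fix is immediate from what you already proved: use the other direction of your two-sided estimate, namely $J_m\le J_{m'}+Q_t^{-2d}I$, and sum over $m'$ to get
\[
Q_t^d J_m^p \le 2^{p-1}\sum_{m'} J_{m'}^p + 2^{p-1}Q_t^{d(1-2p)}I^p \lesssim I^p,
\]
so that $J_m\lesssim Q_t^{-d/p}I$. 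Now $I\lesssim \lVert\calF^{-1}(\vrho_t\hat f)\rVert_{\ell^p}$ (Proposition~\ref{prop:2}) together with Lemma~\ref{lem:3} does chain to the desired conclusion. This is exactly the direction the paper uses.
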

\begin{proof}
	For each $m \in \NN_{Q_t}^d$ we define
	$$
	J_m =
	\big\lVert
	\bvar{\big(\seq{\calF^{-1}\big(\Phi_N \vrho_t \hat{f} \big)(Q_t x + m)}{N \in \NN}\big)}
	\big\rVert_{\ell^p(x)}.
	$$
	Then, by Proposition \ref{prop:2},
	$$
	I^p =
	\sum_{m \in \NN_{Q_t}^d} J_m^p =
	\big\lVert
	\bvar{\big(\seq{\calF^{-1}\big(\Phi_N \vrho_t \hat{f} \big)}{N \in \NN}\big)}
	\big\rVert_{\ell^p}^p
	\lesssim
	\big\lVert
	\calF^{-1}\big(\vrho_t \hat{f} \big)
	\big\rVert_{\ell^p}^p.
	$$
	If $m, m' \in \NN_{Q_t}^d$ then we may write
	\begin{multline*}
		\bigg\lVert
		\bvar{\bigg(
			\seq{
				\int_{\TT^d}
				e^{-2\pi i \sprod{\xi}{(Q_t x + m)}}
				\big(1 - e^{2\pi i \sprod{\xi}{(m -m')}}\big)
			\Phi_N(\xi) \vrho_t(\xi) \hat{f}(\xi) d\xi}{N \in \NN}
		\bigg)}
		\bigg\rVert_{\ell^p(x)}\\
		\lesssim
		\Big\lVert
		\int_{\TT^d}
		e^{-2\pi i \sprod{\xi}{x}}
		\big(1  - e^{-2\pi i \sprod{\xi}{(m-m')}}\big)
		\vrho_t(\xi) \hat{f}(\xi)
		\Big\rVert_{\ell^p(x)}.
	\end{multline*}
	Since $\vrho_t = \vrho_t \vrho_{t-1}$, by Minkowski's inequality and Lemma \ref{lem:2}
	the last expression may be dominated by
	$$
	\Big\lVert
	\int_{\TT^d}
	e^{-2\pi i \sprod{\xi}{x}}
	\big(1  - e^{-2\pi i \sprod{\xi}{(m-m')}}\big)
	\vrho_{t-1}(\xi) \hat{f}(\xi)
	\Big\rVert_{\ell^1(x)}
	\big\lVert
	\calF^{-1}\big(\vrho_t \hat{f} \big)
	\big\rVert_{\ell^p}
	\leq
	Q_t^{-2d}
	\big\lVert
	\calF^{-1}\big(\vrho_t \hat{f}\big)
	\big\rVert_{\ell^p},
	$$
	thus
	$$
	J_m \leq J_{m'} + Q_t^{-2d}
	\big\lVert
	\calF^{-1}\big(\vrho_t \hat{f} \big)
	\big\rVert_{\ell^p}.
	$$
	Raising to $p$'th power and summing up over $m' \in \NN_{Q_t}^d$ we get
	$$
	Q_t^d J_m^p \leq 2^{p-1} I^p
	+ 2^{p-1} Q_t^{d(1-2p)}
	\big\lVert
	\calF^{-1}\big(\vrho_t \hat{f} \big)
	\big\rVert_{\ell^p}^p
	\lesssim
	\big\lVert
	\calF^{-1}\big(\vrho_t \hat{f} \big)
	\big\rVert_{\ell^p}^p
	$$
	and Lemma \ref{lem:3} finishes the proof.
\end{proof}
\subsection{Unrestricted inequalities}
We start by proving $\ell^2(\ZZ^d)$-boundedness of $r$-variations for $\nu_{2^j}$. The proofs
of the estimates like in \eqref{eq:63} are based on Bourgain's `logarithmic' type lemmas
(see \cite{bou}, see also \cite{K, K2}). We present different approach, based on a direct analysis
of this multiplier. For the proof of Theorem \ref{thm:1} in case $p = 2$ we use Theorem
\ref{thm:6}.

\begin{theorem}
	\label{thm:6}
	For any $r>2$, there are $\delta_2>0$ and $C>0$ such that for any $s \in \NN_0$ and
	$f\in\ell^2(\ZZ^d)$
	\begin{align}
		\label{eq:63}
		\big\lVert
		\var{\big(\seq{\calF^{-1}\big(\nu_{2^j}^s \hat{f} \big)}
			{j\ge 0}\big)}
		\big\rVert_{\ell^2}
		\leq
		C
		2^{-s\delta_2 }
		\vnorm{f}_{\ell^2}.
	\end{align}
\end{theorem}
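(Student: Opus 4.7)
The plan is to extract the decay factor $2^{-s\delta_2}$ from the Gauss sum bound $|G(a/q)| \lesssim q^{-\delta} \lesssim 2^{-s\delta}$ (see \eqref{eq:8}), by combining three structural ingredients: the pairwise-disjoint Fourier supports of $\{\eta_s(\cdot - a/q)\}_{a/q \in \mathscr{R}_s}$ (as observed in Lemma \ref{lem:4}), which deliver orthogonality via Plancherel; the dilation identity $\Phi_N(\zeta) = \Phi_1(N^A \zeta)$ together with the decay/regularity estimates \eqref{eq:9} and \eqref{eq:10}; and the Rademacher--Menshov-type decomposition of $V_r$ provided by Lemma \ref{lem:6}.

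First, by monotonicity of $V_r$, I would reduce matters to a uniform bound on the truncated variation for $0 \leq j \leq 2^n$. Denoting $T_j f = \mathcal{F}^{-1}(\nu_{2^j}^s \hat f)$ and applying Lemma \ref{lem:6} pointwise, then taking $\ell^2$-norms and using Minkowski, I obtain
\[
\big\lVert V_r\big(T_j f : 0 \leq j \leq 2^n\big) \big\rVert_{\ell^2}
\leq \sqrt{2} \sum_{i=0}^{n} I_i,
\qquad
I_i := \Big(\sum_{l=0}^{2^{n-i}-1} \big\lVert T_{(l+1)2^i} f - T_{l 2^i} f \big\rVert_{\ell^2}^2 \Big)^{1/2}.
\]
Next, I would expand each $I_i$ via Plancherel. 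Because the cutoffs $\eta_s(\cdot - a/q)$ have pairwise disjoint supports, the cross-terms in $|\nu_{N'}^s - \nu_N^s|^2$ vanish and, after changing variables $\zeta = \xi - a/q$, one gets
\[
I_i^2 \;\leq\; 2^{-2s\delta}\|f\|_{\ell^2}^2 \cdot \sup_{\zeta \in \RR^d} F_i(\zeta),
\qquad
F_i(\zeta) := \sum_{l \geq 0} \big\lvert \Phi_1\big(2^{(l+1)2^i A}\zeta\big) - \Phi_1\big(2^{l 2^i A}\zeta\big) \big\rvert^2.
\]

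The heart of the argument is showing the numerical bound $\sup_\zeta F_i(\zeta) \lesssim 1$ uniformly in $i \geq 0$. For a fixed $\zeta$, set $\mu_l = \|2^{l 2^i A}\zeta\|$; note that the ratio $\mu_{l+1}/\mu_l \geq 2^{2^i}$. Let $l^*$ be the smallest $l$ with $\mu_l \geq 1$. For $l < l^*-1$, using \eqref{eq:10} gives $|\Phi_1(u_{l+1}) - \Phi_1(u_l)|^2 \lesssim \mu_{l+1}^2$, and $\sum_{l < l^*-1} \mu_{l+1}^2$ is a geometric series with ratio $2^{-2^{i+1}}$. For $l \geq l^*$, using \eqref{eq:9} gives $|\Phi_1(u_{l+1}) - \Phi_1(u_l)|^2 \lesssim \mu_l^{-2/d}$, and $\sum_{l \geq l^*} \mu_l^{-2/d}$ is a geometric series with ratio $2^{-2^{i+1}/d}$. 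Together with the $O(1)$ contribution from the single transition index $l = l^*-1$, this gives $F_i(\zeta) \leq C$ uniformly. Consequently $I_i \lesssim 2^{-s\delta}\|f\|_{\ell^2}$ uniformly in $i$, and hence
\[
\big\lVert V_r\big(T_j f : 0 \leq j \leq 2^n\big) \big\rVert_{\ell^2} \lesssim (n+1)\, 2^{-s\delta}\|f\|_{\ell^2}.
\]

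Finally I would remove the truncation. Choosing $n$ of order $s$ (say $n = \lceil C s \rceil$) absorbs the $(n+1)$ factor into the exponential, giving $\lesssim 2^{-s\delta_2}\|f\|_{\ell^2}$ for any $\delta_2 < \delta$ on the range $0 \leq j \leq 2^{Cs}$. To handle the tail $j > 2^{Cs}$ I would use \eqref{eq:19} together with Proposition \ref{prop:2} applied to each modulated piece $\tilde T_j h_{a/q} = \mathcal{F}^{-1}(\Phi_{2^j} \eta_s \widehat{h_{a/q}})$, combined with the orthogonality of the frequency projections $P_{a/q} f = \mathcal{F}^{-1}(\eta_s(\cdot-a/q)\hat f)$. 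The main obstacle is precisely this tail estimate: a naive triangle inequality for $V_r$ over $a/q \in \mathscr{R}_s$ loses a factor $|\mathscr{R}_s| \simeq 2^{s(d+1)}$ that the Gauss sum decay cannot absorb, so one needs a genuinely almost-orthogonal argument --- likely a further dyadic decomposition of the tail range paired with the Plancherel identity, exploiting that $\|T_j f\|_{\ell^2} \lesssim 2^{-s\delta}\|f\|_{\ell^2}$ uniformly in $j$ via the disjoint Fourier supports.
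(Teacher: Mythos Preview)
Your treatment of the bounded range $0 \le j \le 2^{Cs}$ via Lemma \ref{lem:6}, Plancherel, the disjointness of the $\eta_s(\cdot - a/q)$, and the numerical bound $\sup_\zeta F_i(\zeta)\lesssim 1$ is correct and matches the paper's argument essentially line for line; the paper phrases the last step as $\sum_m \min\{|2^{mA}\zeta|,|2^{mA}\zeta|^{-1/d}\}\lesssim 1$, which is your $F_i$ bound in different clothing. Your choice of cutoff $n\simeq s$ also coincides with the paper's $\kappa_s = 20d(s+1)$.

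The genuine gap is the tail $j\ge 2^{\kappa_s}$, and you correctly diagnose the obstacle: applying Proposition \ref{prop:2} termwise and summing over $a/q\in\mathscr{R}_s$ costs $2^{s(d+1)}$, while the suggestion of ``a further dyadic decomposition of the tail'' cannot close because the tail is infinite and each $I_i$ only carries a uniform (not summable-in-$i$) bound. The idea you are missing is a periodicity/averaging trick. All denominators in $\mathscr{R}_s$ divide $Q_s=(2^{s+1})!$, so the phase $x\mapsto e^{-2\pi i (a/q)\cdot x}$ is $Q_s\ZZ^d$-periodic. The paper writes
\[
\calF^{-1}\big(\nu_{2^j}^s\hat f\big)(x)=\sum_{a/q\in\mathscr{R}_s} G(a/q)\,e^{-2\pi i (a/q)\cdot x}\,\calF^{-1}\big(\Phi_{2^j}\eta_s\hat f(\cdot+a/q)\big)(x),
\]
freezes the modulation variable by introducing $I(x,y)$ with the phases depending on $x$ and the multiplier acting in $y$, and shows (via Plancherel and $|\xi||\Phi_{2^j}(\xi)|\lesssim 2^{-j/d}$) that replacing $I(x,x)$ by $I(x,x+u)$ for $u\in\NN_{Q_s}^d$ costs only $Q_s\cdot 2^{(d+1)s}\sum_{j\ge 2^{\kappa_s}}2^{-j/d}\|f\|_{\ell^2}$, which is harmless because $\kappa_s$ was chosen large. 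After averaging over $u$ and changing variables one lands on $\|I(u,x)\|_{\ell^2(x)}$ with $u$ a frozen parameter; now Proposition \ref{prop:2} applies in $x$ and strips $\Phi_{2^j}$, reducing to a quantity with no $j$-dependence whose $\ell^2$-norm, after undoing the averaging, is exactly $\sum_{a/q}|G(a/q)|^2\|\eta_s(\cdot-a/q)\hat f\|_{L^2}^2\lesssim 2^{-2\delta s}\|f\|_{\ell^2}^2$ by disjointness. This decoupling of the exponential sum from the multiplier is the missing ingredient; without it the tail cannot be closed.
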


\begin{proof}
	The proof will consist of two parts where we shall estimate separately the pieces of
	$r$-variations where $0\le j \le 2^{\kappa_s}$ and $j \ge 2^{\kappa_s}$ where
	$\kappa_s = 20 d(s+1)$. By \eqref{eq:19} and \eqref{eq:25} we see that
	\begin{multline}
		\label{eq:76}
		\big\lVert
		\var{\big(\seq{\calF^{-1}\big(\nu_{2^j}^s \hat{f} \big)} {j\ge 0}\big)}
		\big\rVert_{\ell^2}
		\lesssim \big\lVert
		\calF^{-1}\big(\nu_{2^{2^{\kappa_s}}}^s \hat{f} \big)
		\big\rVert_{\ell^2}
		+\big\lVert
		\var{\big(\seq{\calF^{-1}\big(\nu_{2^j}^s \hat{f} \big)}
			{0\le j\le 2^{\kappa_s}}\big)}
		\big\rVert_{\ell^2}\\
		+\big\lVert
		\var{\big(\seq{\calF^{-1}\big(\nu_{2^j}^s \hat{f} \big)}
			{j\ge 2^{\kappa_s}}\big)}
		\big\rVert_{\ell^2}.
	\end{multline}
	By Plancherel's theorem, \eqref{eq:8} and the disjointness of supports of $\eta_s(\xi-a/q)$'s
	while $a/q$ varies over $\mathscr{R}_s$, the first term in \eqref{eq:76} is bounded by
	$2^{-s\delta}\|f\|_{\ell^2}$. To estimate the second term we apply Lemma \ref{lem:6}. Indeed,
	let $I^i_j=[j2^i, (j+1)2^i)$ and note that \eqref{eq:62} and Plancherel's theorem give
	\begin{multline*}
		\big\lVert
		\var{\big(\seq{\calF^{-1}\big(\nu_{2^j}^s \hat{f} \big)}
			{0\le j\le 2^{\kappa_s}}\big)}
		\big\rVert_{\ell^2}
		\lesssim
		\sum_{i=0}^{\kappa_s}
		\bigg\|\bigg(
		\sum_{j=0}^{2^{\kappa_s-i}-1}
		\Big|\sum_{m \in I_j^i}
		\calF^{-1}\big(\nu_{2^{m+1}}^s\hat{f} \big)
		-\calF^{-1}\big(\nu_{2^m}^s \hat{f} \big)
		\Big|^2\bigg)^{1/2}
		\bigg\|_{\ell^2}\\
		=
		\sum_{i=0}^{\kappa_s}
		\bigg(\sum_{j=0}^{2^{\kappa_s-i}-1} \int_{\TT^d}
		\Big|
		\sum_{m \in I^i_j}
		\big(\nu_{2^{m+1}}^s(\xi)-\nu_{2^m}^s(\xi)\big) \hat{f}(\xi)
		\Big|^2
		d\xi
		\bigg)^{1/2}.
	\end{multline*}
	Next, for any $i \in \{0, 1, \ldots, \kappa_s\}$ we have	
	\begin{multline*}
		\sum_{j=0}^{2^{\kappa_s-i}-1} \int_{\TT^d}
		\Big|
		\sum_{m \in I^i_j}
		\nu_{2^{m+1}}^s(\xi)-\nu_{2^m}^s(\xi)
		\Big|^2
		\big\lvert \hat{f}(\xi) \big\rvert^2
		d\xi\\
		\le
		\sum_{j=0}^{2^{\kappa_s-i}-1}
		\int_{\TT^d}
		\sum_{m, m' \in I^i_j}
		\big|\nu_{2^{m+1}}^s(\xi)-\nu_{2^m}^s(\xi) \big|
		\cdot
		\big|\nu_{2^{m'+1}}^s(\xi)-\nu_{2^{m'}}^s(\xi) \big|
		\cdot
		\big|\hat{f}(\xi)\big|^2
		d\xi.
	\end{multline*}
	Let $\Delta_m(\xi) = \Phi_{2^{m+1}}(\xi) - \Phi_{2^m}(\xi)$. Using \eqref{eq:9} and
	\eqref{eq:10} we can estimate
	\begin{align*}
		\sum_{m \in \NN}
		\abs{\Delta_m(\xi)}
		\lesssim
		\sum_{m \in \NN}
		\min\big\{|2^{mA}\xi|, |2^{mA}\xi|^{-1/d}\big\}
		\lesssim 1.
	\end{align*}
	Therefore, by the disjointness of supports of $\eta_s(\cdot-a/q)$'s we obtain
	\begin{multline*}
		\sum_{j = 0}^{2^{\kappa_s-i}-1}
		\int_{\TT^d}
		\Big|
		\sum_{m \in I^i_j}
		\nu_{2^{m+1}}^s(\xi)-\nu_{2^m}^s(\xi)
		\Big|^2
		\big\lvert \hat{f}(\xi) \big\rvert^2
		d\xi\\
		\le
		\sum_{a/q \in \mathscr{R}_s}
		|G(a/q)|^2
		\sum_{j = 0}^{2^{\kappa_s-i}-1}
		\sum_{m, m' \in I_j^i}
		\int_{\TT^d}
		\abs{\Delta_m(\xi- a/q)}
		\cdot
		\abs{\Delta_{m'}(\xi - a/q)}
		\cdot
		\eta_s(\xi-a/q)^2
		\big| \hat{f}(\xi)\big|^2
		d\xi\\
		\lesssim
		\sum_{a/q \in \mathscr{R}_s}
		\abs{G(a/q)}^2
		\int_{\TT^d}
		\bigg(
		\sum_{j=0}^{\infty}\min\big\{|2^{jA}(\xi-a/q)|, |2^{jA}(\xi-a/q)|^{-1/d}\big\}
		\bigg)^2
		\eta_s(\xi-a/q)^2
		\big| \hat{f}(\xi)\big|^2
		d\xi
	\end{multline*}
	which, by \eqref{eq:8}, is bounded by $2^{-2 s \delta} \lVert f \rVert_{\ell^2}^2$. Finally,
	it remains to estimate the last term in \eqref{eq:76}. Let us observe that if $x \in \ZZ^d$
	then
	$$
	\calF^{-1}\big(\nu_{2^j}^s \hat{f} \big)(x)
	=
	\sum_{a/q \in \mathscr{R}_s}
	e^{-2\pi i \sprod{(a/q)}{x}}
	G(a/q)
	\calF^{-1}\big(\Phi_{2^j} \eta_s \hat{f}(\cdot + a/q)\big)(x).
	$$
	For any $x, y \in \ZZ^d$ we set
	\begin{align*}
		I(x, y) =
		\var{\Big(
		\seq{\sum_{a/q \in \mathscr{R}_s} G(a/q) e^{-2\pi i\sprod{(a/q)}{x}}
		\calF^{-1}\big(\Phi_{2^j} \eta_s \hat{f}(\cdot + a/q)\big)(y)
		}{j \geq 2^{\kappa_s}}\Big)}.
	\end{align*}
	and
	\begin{align*}
		J(x, y) =
		\sum_{a/q \in \mathscr{R}_s} G(a/q) e^{-2\pi i \sprod{(a/q)}{x}}
		\calF^{-1} \big( \eta_s \hat{f}(\cdot + a/q)\big)(y).
	\end{align*}
	We notice, functions $x \mapsto I(x, y)$ and $x \mapsto J(x, y)$ are $Q_s \ZZ^d$-periodic.
	If $u \in \NN^d_{Q_s}$ and $a/q \in \mathscr{R}_s$, by Plancherel's theorem we get
	\begin{multline*}
		\big\lVert
		\calF^{-1}\big(\Phi_{2^j} \eta_s \hat{f}(\cdot + a/q)\big)(x+u)
		-
		\calF^{-1}\big(\Phi_{2^j} \eta_s \hat{f}(\cdot + a/q)\big)(x)
		\big\rVert_{\ell^2(x)}\\
		=
		\big\lVert
		\big(1 - e^{- 2\pi i \sprod{\xi}{u}} \big)
		\Phi_{2^j}(\xi)
		\eta_s(\xi)
		\hat{f}(\xi + a/q)
		\big\rVert_{L^2(d\xi)}
		\lesssim
		2^{-j/d} \norm{u} \cdot
		\big\lVert
		\eta_s(\cdot - a/q) \hat{f}
		\big\rVert_{L^2}
	\end{multline*}
	since, for $\xi \in \TT^d$
	$$
	\norm{\xi} \abs{\Phi_{2^j}(\xi)} \lesssim \norm{\xi} \norm{2^{jA} \xi}^{-1/d}
	\lesssim 2^{-j/d}.
	$$
	Therefore, by the triangle inequality, \eqref{eq:18} and Plancherel's theorem
	$$
	\big\lvert
	\lVert
	I(x, x+u)
	\rVert_{\ell^2(x)}
	-
	\lVert
	I(x, x)
	\rVert_{\ell^2(x)}
	\big\rvert
	\leq
	Q_s
	\sum_{j\ge 2^{\kappa_s}}
	\sum_{a/q\in\mathscr{R}_s}
	2^{-j/d}
	\big\lVert \eta_s(\cdot - a/q) \hat{f}
	\big\rVert_{L^2}.
	$$
	Since $\mathscr{R}_s$ contains at most $2^{s(d+1)}$ rational numbers we have
	$$
	\sum_{a/q \in \mathscr{R}_s}
	\big\lVert
	\eta_s(\cdot - a/q) \hat{f}
	\big\rVert_{L^2}
	\leq
	2^{(d+1)s}
	\vnorm{f}_{\ell^2}.
	$$
	Hence, using $2^{\kappa_s}/d - (s+1)2^{s+1} - (d+1) s \geq \delta s$ we obtain
	$$
	\lVert I(x, x) \rVert_{\ell^2(x)}
	\lesssim
	\lVert I(x, x+u) \rVert_{\ell^2(x)}
	+
	2^{-\delta s}
	\lVert f \rVert_{\ell^2}.
	$$
	Thus, we may estimate
	\begin{align}
		\label{eq:33}
		\Big\lVert
		\var{\big(\seq{\calF^{-1}\big(\nu_j^s \hat{f}\big)}{j \geq 2^{\kappa_s}}\big)}
		\Big\rVert_{\ell^2}^2
		\lesssim
		\frac{1}{Q_s^d}
		\sum_{u\in\NN_{Q_s}^d}
		\lVert I(x, x+u) \rVert_{\ell^2(x)}^2
		+
		2^{-2 \delta s} \vnorm{f}_{\ell^2}^2.
	\end{align}
	Next, by double change of variables and periodicity we get
	$$
	\sum_{u \in \NN_{Q_s}^d}
	\lVert I(x, x+u) \rVert_{\ell^2(x)}^2
	=
	\sum_{x \in \ZZ^d}
	\sum_{u \in \NN_{Q_s}^d}
	I(x-u, x)^2
	=
	\sum_{x \in \ZZ^d}
	\sum_{u \in \NN_{Q_s}^d} I(u, x)^2
	=
	\sum_{u \in \NN_{Q_s}^d}
	\lVert I(u, x) \rVert_{\ell^2(x)}^2
	$$
	what, using Proposition \ref{prop:2} and \eqref{eq:8}, is bounded by
	\begin{align*}
		\sum_{u \in \NN_{Q_s}^d}
		\lVert
		J(u, x)
		\rVert_{\ell^2(x)}^2
		=
        \sum_{u \in \NN_{Q_s}^d}
	   \lVert J(x, x+u) \rVert_{\ell^2(x)}^2
        =
		\sum_{u \in \NN_{Q_s}^d}
		\int_{\TT^d}\Big|\sum_{a/q\in\mathscr{R}_s}G(a/q) e^{2\pi i \sprod{(a/q)}{u}}
		\eta_s(\xi-a/q)
		\hat{f}(\xi)\Big|^2
		d\xi\\
		\lesssim
		2^{-2\delta s}
		Q_s^d
		\cdot
		\|f\|_{\ell^2}^2.
	\end{align*}
	Finally, combining with \eqref{eq:33} we obtain an estimate on the last term in
	\eqref{eq:76}.
\end{proof}

\begin{theorem}\label{thm:7}
	There exists $C>0$ such that for every $f\in\ell^2\big(\ZZ^d\big)$
	$$
	\big\lVert
	\sup_{N \in \NN}
	\big\lvert M_N f \big\rvert
	\big\rVert_{\ell^2}
	\leq
	C \vnorm{f}_{\ell^2}.
	$$
\end{theorem}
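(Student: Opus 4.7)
The plan is to assemble the theorem from the ingredients already developed, using three moves: positivity to reduce to dyadic scales, the $\ell^2$ approximation of $m_N$ by $\nu_N$ from Lemma~\ref{lem:4}, and the variational estimate of Theorem~\ref{thm:6} summed over the dyadic index $s$.

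First, since the kernel $K_N$ defined in \eqref{eq:34} is nonnegative, for $f \geq 0$ and any $N \in [2^n, 2^{n+1})$ we have the pointwise bound
\[
M_N f(x) \le 2^k M_{2^{n+1}} f(x),
\]
so by linearity it suffices to prove $\bigl\|\sup_{j \ge 0} |M_{2^j} f|\bigr\|_{\ell^2} \lesssim \|f\|_{\ell^2}$. By \eqref{eq:25} with $j_0 = 0$ (and noting $|M_1 f| = |f|$), this is controlled by $\|f\|_{\ell^2} + \|V_r((M_{2^j} f)_{j\ge 0})\|_{\ell^2}$ for any fixed $r > 2$.

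Second, I would split the sequence as $M_{2^j} f = \calF^{-1}(\nu_{2^j} \hat f) + \calF^{-1}\bigl((m_{2^j} - \nu_{2^j}) \hat f\bigr)$. The error sequence is controlled by \eqref{eq:18}, Plancherel's theorem, and Lemma~\ref{lem:4}:
\[
\bigl\|V_r\bigl(\calF^{-1}\bigl((m_{2^j} - \nu_{2^j})\hat f\bigr) : j \ge 0\bigr)\bigr\|_{\ell^2}
\le 2 \Bigl(\sum_{j \ge 0} \|m_{2^j} - \nu_{2^j}\|_{L^\infty}^2\Bigr)^{1/2} \|f\|_{\ell^2}
\lesssim \Bigl(\sum_{j \ge 0} 2^{-2 j \delta_1}\Bigr)^{1/2} \|f\|_{\ell^2} \lesssim \|f\|_{\ell^2}.
\]
For the main term, decomposing $\nu_N = \sum_{s \ge 0} \nu_N^s$ and applying the triangle inequality for $V_r$ followed by Theorem~\ref{thm:6},
\[
\bigl\|V_r\bigl(\calF^{-1}(\nu_{2^j} \hat f) : j \ge 0\bigr)\bigr\|_{\ell^2}
\le \sum_{s \ge 0} \bigl\|V_r\bigl(\calF^{-1}(\nu_{2^j}^s \hat f) : j \ge 0\bigr)\bigr\|_{\ell^2}
\le C \sum_{s \ge 0} 2^{-s \delta_2} \|f\|_{\ell^2} \lesssim \|f\|_{\ell^2},
\]
the geometric series converging because $\delta_2 > 0$.

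Putting these estimates together yields the desired $\ell^2$ bound. There is no real obstacle here: all the serious work has been done in Lemma~\ref{lem:4} (the approximation) and Theorem~\ref{thm:6} (the $s$-gain variational estimate). The only point worth verifying is that the constants are summable in $s$ and that the error $m_{2^j} - \nu_{2^j}$ decays fast enough in $j$ that \eqref{eq:18} may be applied directly; both are immediate from the stated estimates. The role of passing to $r$-variation rather than directly estimating the supremum is simply that the uniform $\ell^2$ decay in $s$ from Theorem~\ref{thm:6} is phrased variationally, and variations dominate suprema by \eqref{eq:25}.
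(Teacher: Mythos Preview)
Your argument is correct and follows exactly the route the paper takes: the paper's proof is the single sentence ``In view of \eqref{eq:25} it suffices to apply Theorem~\ref{thm:6} and Lemma~\ref{lem:4},'' and you have simply unpacked that sentence into its natural pieces (dyadic reduction by positivity, \eqref{eq:25} to pass from the supremum to a variation, Lemma~\ref{lem:4} for the error term via \eqref{eq:18} and Plancherel, and Theorem~\ref{thm:6} summed over $s$ for the main term). One cosmetic slip: $M_1 f$ is a translate of $f$, not $f$ itself, so $|M_1 f| = |f|$ is false pointwise; what you need (and what holds) is $\|M_1 f\|_{\ell^2} = \|f\|_{\ell^2}$.
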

\begin{proof}
	In view of \eqref{eq:25} it suffices to apply Theorem \ref{thm:6} and Lemma \ref{lem:4}.
\end{proof}
For each $N \in \NN$ and $t\in\NN_0$ we define  new multipliers
\begin{align}
	\label{eq:71}
	\Omega_N^t(\xi)
	=\sum_{a \in \NN_{Q_t}^d}
	G(a/Q_t) \Phi_N(\xi - a/Q_t) \vrho_t(\xi - a/Q_t).
\end{align}
Then
\begin{theorem}
	\label{th:1}
	Let $p \in (1, \infty)$ and $r > 2$. There exists $C_{p, r}>0$ such that for any
	$t \in\NN_0$
	$$
	\big\lVert
	\bvar{\big(\seq{\calF^{-1}\big(\Omega_N^t \hat{f}\big)}{N \in \NN}\big)}
	\big\rVert_{\ell^p}
	\leq
	C_{r, p}
	\vnorm{f}_{\ell^p}.
	$$
\end{theorem}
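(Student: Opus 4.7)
My plan is to factor $\Omega_N^t$ as a product of two multipliers which can be handled independently. Using $\vrho_t\vrho_{t-1}=\vrho_t$ together with the mutual disjointness in $\TT^d$ of the two families $\{\vrho_t(\cdot-a/Q_t)\}_{a\in\NN_{Q_t}^d}$ and $\{\vrho_{t-1}(\cdot-a/Q_t)\}_{a\in\NN_{Q_t}^d}$---both supported in neighborhoods of the $1/Q_t$-separated rationals $a/Q_t$ of widths $\sim Q_{t+1}^{-3d}$ and $\sim Q_t^{-3d}$, much smaller than the separation---one verifies the pointwise identity
$$\Omega_N^t(\xi)=\widetilde{\Phi}_N(\xi)\,\Pi^t(\xi),$$
where
$$\widetilde{\Phi}_N(\xi):=\sum_{a\in\NN_{Q_t}^d}\Phi_N(\xi-a/Q_t)\vrho_{t-1}(\xi-a/Q_t),\qquad \Pi^t(\xi):=\sum_{a\in\NN_{Q_t}^d}G(a/Q_t)\vrho_t(\xi-a/Q_t).$$
Indeed, for $\xi$ in the support of the unique nonzero $\vrho_t(\xi-a_0/Q_t)$, disjointness forces the matching $\vrho_{t-1}$-term to be indexed by $a_0$, and on that support $\vrho_{t-1}(\xi-a_0/Q_t)=1$ thanks to the choice $\eta=1$ on $[-1/4,1/4]^d$. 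Setting $G^t:=\calF^{-1}(\Pi^t\hat f)$ gives $\calF^{-1}(\Omega_N^t\hat f)=\calF^{-1}(\widetilde\Phi_N\widehat{G^t})$, reducing the theorem to the uniform-in-$t$ $\ell^p$-bound $\vnorm{G^t}_{\ell^p}\lesssim\vnorm f_{\ell^p}$ for $T_{\Pi^t}$ together with a uniform variational bound $\bigl\lVert\bvar{\bigl(\seq{\calF^{-1}(\widetilde\Phi_N\hat h)}{N\in\NN}\bigr)}\bigr\rVert_{\ell^p}\lesssim\vnorm h_{\ell^p}$ applied to $h=G^t$.

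The first bound follows from a Magyar--Stein--Wainger transference argument---or, equivalently, a direct sampling argument in the style of Lemma \ref{lem:3} applied to each disjointly translated bump: the weights satisfy $|G(a/Q_t)|\leq 1$, the base multiplier $\vrho_t=\phi_t*\psi_t$ has $\lVert\calF^{-1}\vrho_t\rVert_{L^1(\RR^d)}\lesssim 1$ (Lemma \ref{lem:2}), and the supports $\vrho_t(\cdot-a/Q_t)$ are disjoint. The second bound is the variational analog: since $\widetilde\Phi_N$ is the smooth unweighted periodization of the $\RR^d$-multiplier $\Phi_N\vrho_{t-1}$, it suffices to transfer the continuous variational estimate $\lVert\bvar{((\calF_{\RR^d}^{-1}(\Phi_N\vrho_{t-1}\calF H))_N)}\rVert_{L^p(\RR^d)}\lesssim\vnorm H_{L^p(\RR^d)}$, which is the continuous input already used in the proof of Proposition \ref{prop:2} (cf.\ the Remark after Theorem 1.5 in \cite{JSW}) together with Young's inequality and Lemma \ref{lem:2}.

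The main obstacle is executing this transference at the level of variational norms with constants independent of $t$. I would carry it out by mimicking the proof of Proposition \ref{prop:4}: decompose $\ZZ^d$ into residue classes modulo $Q_t$, apply the continuous $L^p(\RR^d)$ variational estimate on each class after the natural identification of $Q_t\ZZ^d+m$ with $\ZZ^d$, and then invoke estimate \eqref{eq:24} of Lemma \ref{lem:2} to show that the resulting $\ell^p$-norm is essentially independent of the residue class $m$. The hypothesis of Lemma \ref{lem:2} is met precisely because the $\vrho_{t-1}$-translates have widths $\ll 1/Q_t$ while being $1/Q_t$-separated, so the kernel differences are of order $Q_t^{-2d}$, exactly as in the proof of Proposition \ref{prop:4}; summing the resulting comparison inequalities over the residues yields the desired discrete bound.
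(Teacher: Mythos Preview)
Your factorization $\Omega_N^t=\widetilde\Phi_N\cdot\Pi^t$ is correct, and the overall strategy is sound and close in spirit to the paper's proof: both rely on the residue-class decomposition (Lemma~\ref{lem:3}, Proposition~\ref{prop:4}), the continuous variational input from Proposition~\ref{prop:2}, and an arithmetic computation of the Gauss-sum kernel. The paper does not factor but instead writes $\calF^{-1}(\Omega_N^t\hat f)(Q_tx+m)=\calF^{-1}(\Phi_N\vrho_t F(\cdot;m))(Q_tx+m)$ with $F(\xi;m)=\sum_a G(a/Q_t)\hat f(\xi+a/Q_t)e^{-2\pi i(a/Q_t)\cdot m}$, applies Proposition~\ref{prop:4} once, and then evaluates $\sum_a G(a/Q_t)e^{-2\pi i(a/Q_t)\cdot x}=Q_t^{d-k}L_m$ to bound the resulting kernel in $\ell^1$. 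Your route separates the $N$-dependence from the Gauss weights; the paper treats them jointly.

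There is, however, a genuine gap in your justification of the $\ell^p$-boundedness of $\Pi^t$. You write that it follows because ``the weights satisfy $|G(a/Q_t)|\le1$, the base multiplier has bounded $L^1$ inverse transform, and the supports are disjoint.'' These three facts alone are \emph{not} sufficient for $p\neq2$: a periodization $\sum_a c_a\vrho_t(\xi-a/Q_t)$ with arbitrary unimodular coefficients $c_a$ is in general \emph{not} an $\ell^p$-multiplier with norm independent of $Q_t$. What makes $\Pi^t$ work is the specific arithmetic identity
\[
\sum_{a\in\NN_{Q_t}^d} G(a/Q_t)\,e^{-2\pi i(a/Q_t)\cdot x}
= Q_t^{d-k}\,\#\{y\in\NN_{Q_t}^k:\calQ(y)\equiv x\pmod{Q_t}\},
\]
which is nonnegative and satisfies $\sum_m L_m=Q_t^k$; combined with Lemma~\ref{lem:3} this gives $\|\calF^{-1}\Pi^t\|_{\ell^1}\lesssim1$. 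This is precisely the computation the paper carries out, and any invocation of ``Magyar--Stein--Wainger transference'' for the weighted sum must ultimately rest on it. Once you insert this step, your argument goes through; as written, the $\Pi^t$ bound is asserted rather than proved. (Also note the minor issue that $\vrho_{t-1}$ is undefined at $t=0$; this is easily patched.)
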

\begin{proof}
	Let us observe that
	$$
	\calF^{-1}\big(\Phi_N (\cdot - a/Q_t) \vrho_t(\cdot - a/Q_t) \hat{f} \big)(Q_t x + m)
	=
	\calF^{-1}\big(\Phi_N \vrho_t \hat{f}(\cdot  + a/Q_t)\big)(Q_t x + m) e^{-2\pi i \sprod{(a/Q_t)}{m}}.
	$$
	Therefore,
	$$
	\big\lVert
	\bvar{\big(\seq{\calF^{-1}\big(\Omega_N^t \hat{f} \big)}{N \in \NN}\big)}
	\big\rVert_{\ell^p}^p
	=
	\sum_{m \in \NN_{Q_t}^d}
	\big\lVert
	\bvar{\big(\seq{\calF^{-1}\big(\Phi_N \vrho_t F(\cdot\ ; m) \big)(Q_t x + m)} {N \in \NN} \big)}
	\big\rVert_{\ell^p(x)}^p
	$$
	where
	\begin{equation}
		\label{eq:21}
		F(\xi; m)
		= \sum_{a \in \NN_{Q_t}^d} G(a/Q_t) \hat{f}(\xi + a/Q_t) e^{-2\pi i \sprod{(a/Q_t)}{m}}.
	\end{equation}
	Now, by Proposition \ref{prop:4} and the definition \eqref{eq:21} we get
	\begin{multline*}
		\sum_{m \in \NN_{Q_t}^d}
		\big\lVert
		\bvar{\big(\seq{\calF^{-1}\big(\Phi_N \vrho_t F(\cdot\ ; m) \big)(Q_t x + m)}
		{N \in \NN} \big)} \big\rVert_{\ell^p(x)}^p\\
		\lesssim
		\sum_{m \in \NN_{Q_t}^d}
		\big\lVert
		\calF^{-1}\big( \vrho_t F(\cdot\ ; m)\big)(Q_t x + m)
		\big\rVert_{\ell^p(x)}^p
		=
		\Big\lVert
		\sum_{a \in \NN_{Q_t}^d} G(a/Q_t) \calF^{-1}\big(\vrho_t(\cdot - a/Q_t) \hat{f}\big)
		\Big\rVert_{\ell^p}^p.
	\end{multline*}
	Using Minkowski's inequality we may estimate
	$$
	\Big\lVert
	\sum_{a \in \NN_{Q_t}^d} G(a/Q_t) \calF^{-1}\big(\vrho_t(\cdot - a/Q_t) \hat{f}\big)
	\Big\rVert_{\ell^p}
	\leq
	\Big\lVert
	\sum_{a \in \NN_{Q_t}^d} e^{-2\pi i \sprod{(a/Q_t)}{x}} G(a/Q_t) \calF^{-1} \vrho_t(x)
	\Big\rVert_{\ell^1(x)}
	\vnorm{f}_{\ell^p}.
	$$
	We notice that for $x \in \ZZ^d$ we have
	$$
	\sum_{a \in \NN_{Q_t}^d} e^{-2\pi i \sprod{(a/Q_t)}{x}}
	=
	\begin{cases}
		Q_t^d & \text{ if } Q_t \mid x, \\
		0 & \text{ otherwise.}
	\end{cases}
	$$
	Thus, if $x \equiv m \pmod {Q_t}$ then
	$$
	\sum_{a \in \NN_{Q^t}^d} e^{-2\pi \sprod{(a/Q_t)}{x}} G(a/Q_t)
	=
	Q_t^{-k}
	\sum_{y \in \NN_{Q_t}^k} \sum_{a \in \NN_{Q_t}^d}
	e^{2\pi i \sprod{(a/Q_t)}{(\calQ(y)-m)}}
	=Q_t^{d-k} L_m
	$$
	where $L_m = \#\big\{y \in \NN_{Q_t}^k : \calQ(y) \equiv m \pmod {Q_t}\big\}$. Let us observe
	that $L_m\cap L_{m'}=\emptyset$ if $m\not=m'$. Now, by Lemma \ref{lem:3}
	\begin{multline*}
		\Big\lVert
		\sum_{a \in \NN_{Q_t}^d} e^{-2\pi i \sprod{(a/Q_t)}{x}} G(a/Q_t)
		\calF^{-1}\vrho_t (x)
		\Big\rVert_{\ell^1(x)}
		=
		Q_t^{d-k}
		\sum_{m \in \NN_{Q_t}^d}
		L_m \cdot
		\big\lVert
		\calF^{-1}\vrho_t (Q_t x + m)
		\big\rVert_{\ell^1(x)}\\
		\lesssim
		Q_t^{-k} \sum_{m \in \NN_{Q_t}^d} L_m \cdot
		\big\lVert
		\calF^{-1}\vrho_t
		\big\rVert_{\ell^1}\lesssim1
	\end{multline*}
	what together with Lemma \ref{lem:2} finishes the proof.
\end{proof}

\subsection{Restricted inequalities}
This subsection is devoted to study certain multipliers with $r$-variations restricted to large
and small cubes, i.e. when the side length of cubes in our averages is small or large.
Let us define
\begin{align}
	\label{eq:77}
	\kappa_t = 20 d(t+1).
\end{align}

\subsubsection{Large cubes}
For any $t \in \NN_0$ we will consider a variational norm for averages over cubes with
sides bigger that $2^{2^{\kappa_t}}$. First, let us define auxiliary multipliers for each
$N \in \NN$ and $t\in\NN_0$ by
\begin{align}
	\label{eq:72}
	\Lambda_N^t(\xi)
	= \sum_{a/q \in \mathscr{Q}_t}
	G(a/q) \Phi_N(\xi - a/q) \vrho_t(\xi - a/q)
\end{align}
where
$$
\mathscr{Q}_t = \big\{a/q \in \QQ^d : q \geq 2^{t+1},\ q \mid Q_t \mbox{ and } a \in A_q\big\}.
$$
We show
\begin{proposition}
	\label{prop:3}
	Let $r>2$. There are $\delta_3>0$ and $C_r>0$ such that for any $t \in \NN_0$ and
	$f \in \ell^2\big(\ZZ^d\big)$
	\begin{align*}
		\big\lVert
		\bvar{\big(\seq{\calF^{-1}\big(\Lambda_{2^j}^t \hat{f}\big)}{j \geq 2^{\kappa_t}\big)}}
		\big\rVert_{\ell^2}
		\leq
		C_r
		2^{-t\delta_3}
		\vnorm{f}_{\ell^2}.
	\end{align*}
\end{proposition}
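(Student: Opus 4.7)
The plan is to adapt the large-scale argument ($j \geq 2^{\kappa_s}$) from the proof of Theorem~\ref{thm:6}, with $\mathscr{R}_s$ replaced by $\mathscr{Q}_t$, $\eta_s$ by $\vrho_t$, and $Q_s$ by $Q_t$. Two structural facts make the adaptation work: first, every denominator $q$ appearing in $\mathscr{Q}_t$ divides $Q_t$, so all phases $e^{-2\pi i \sprod{(a/q)}{x}}$ with $a/q \in \mathscr{Q}_t$ are jointly $Q_t \ZZ^d$-periodic in $x$; second, for distinct $a/q, a'/q' \in \mathscr{Q}_t$ the fractions differ by at least $Q_t^{-2}$ in some coordinate, whereas $\vrho_t$ is supported on $\{\abs{\xi_\gamma} \leq \tfrac{1}{2} Q_{t+1}^{-3d \abs{\gamma}}\}$, and $Q_{t+1}^{3d} \gg Q_t^2$ guarantees that the translates $\vrho_t(\cdot - a/q)$ have pairwise disjoint supports. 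The Gauss sum estimate~\eqref{eq:8} then supplies $\abs{G(a/q)} \lesssim 2^{-t\delta}$ uniformly over $a/q \in \mathscr{Q}_t$.

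The supremum contribution to $\bvar$ is handled by \eqref{eq:25} at the reference scale $N = 2^{2^{\kappa_t}}$: Plancherel, the disjoint supports, $\abs{\Phi_N} \leq 1$, and the uniform Gauss sum decay give $\vnorm{\calF^{-1}(\Lambda_{2^{2^{\kappa_t}}}^t \hat{f})}_{\ell^2} \lesssim 2^{-t\delta} \vnorm{f}_{\ell^2}$. For the variation, expand
\begin{align*}
	\calF^{-1}\big(\Lambda_{2^j}^t \hat{f}\big)(x)
	=
	\sum_{a/q \in \mathscr{Q}_t}
	e^{-2\pi i \sprod{(a/q)}{x}}
	G(a/q)
	\calF^{-1}\big(\Phi_{2^j} \vrho_t \hat{f}(\cdot + a/q)\big)(x),
\end{align*}
and, mirroring Theorem~\ref{thm:6}, introduce the two-variable functions
\begin{align*}
	I(x, y) &=
	V_r\Big(
	\sum_{a/q \in \mathscr{Q}_t}
	G(a/q) e^{-2\pi i \sprod{(a/q)}{x}}
	\calF^{-1}\big(\Phi_{2^j} \vrho_t \hat{f}(\cdot + a/q)\big)(y)
	: j \geq 2^{\kappa_t}\Big), \\
	J(x, y) &=
	\sum_{a/q \in \mathscr{Q}_t}
	G(a/q) e^{-2\pi i \sprod{(a/q)}{x}}
	\calF^{-1}\big(\vrho_t \hat{f}(\cdot + a/q)\big)(y),
\end{align*}
both of which are $Q_t \ZZ^d$-periodic in $x$. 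Using $\norm{\xi} \abs{\Phi_{2^j}(\xi)} \lesssim 2^{-j/d}$ on $\TT^d$ together with the summability of $2^{-j/d}$ over $j \geq 2^{\kappa_t}$, show that
\begin{align*}
	\vnorm{I(x, x+u) - I(x, x)}_{\ell^2(x)} \lesssim 2^{-t\delta} \vnorm{f}_{\ell^2}
	\qquad \text{for every } u \in \NN_{Q_t}^d.
\end{align*}
Square, average over $u$, and invoke the $Q_t$-periodicity in $x$ to rewrite $\sum_u \vnorm{I(x, x+u)}_{\ell^2(x)}^2 = \sum_u \vnorm{I(u, x)}_{\ell^2(x)}^2$; for each fixed $u$, Proposition~\ref{prop:2} dominates $\vnorm{I(u, x)}_{\ell^2(x)}$ by $\vnorm{J(u, x)}_{\ell^2(x)}$. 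Finally, evaluate $\sum_u \abs{J(u, x)}^2$ by Parseval on the group $(\ZZ/Q_t \ZZ)^d$ (the phases $e^{-2\pi i \sprod{(a/q)}{u}}$ with $a/q \in \mathscr{Q}_t$ are distinct characters), obtaining $Q_t^d \sum_{a/q} \abs{G(a/q)}^2 \abs{\calF^{-1}(\vrho_t(\cdot - a/q) \hat{f})(x)}^2$; Plancherel, the disjointness of supports of $\vrho_t(\cdot - a/q)$, and $\abs{G(a/q)}^2 \lesssim 2^{-2t\delta}$ close the estimate.

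The most delicate step is the approximation $\vnorm{I(x, x+u) - I(x, x)}_{\ell^2(x)} \lesssim 2^{-t\delta} \vnorm{f}_{\ell^2}$, where the loss from $\norm{u} \leq Q_t$ and the cardinality $\abs{\mathscr{Q}_t} \leq Q_t^{d+1}$ must be absorbed by the geometric tail $\sum_{j \geq 2^{\kappa_t}} 2^{-j/d}$. Since $\log_2 Q_t \lesssim (t+1) 2^{t+1}$ while $2^{\kappa_t}/d$ is doubly exponential in $t$, the choice $\kappa_t = 20 d(t+1)$ from~\eqref{eq:77} leaves a comfortable margin, and the argument goes through once one tracks the exponents carefully.
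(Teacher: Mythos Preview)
Your proposal is correct, but it takes a different organizational route from the paper. The paper's proof decomposes $\ZZ^d$ into residue classes $m \bmod Q_t$, writes $\calF^{-1}(\Lambda_{2^j}^t \hat f)(Q_t x+m)=\calF^{-1}(\Phi_{2^j}\vrho_t F(\cdot\,;m))(Q_t x+m)$ for an explicit $F(\xi;m)$, and then applies the pre-packaged Proposition~\ref{prop:4} directly to obtain
\[
\big\|\bvar\big(\calF^{-1}(\Lambda_{2^j}^t\hat f):j\geq 2^{\kappa_t}\big)\big\|_{\ell^2}^2
\lesssim
\Big\|\sum_{a/q\in\mathscr{Q}_t}G(a/q)\,\calF^{-1}\big(\vrho_t(\cdot-a/q)\hat f\big)\Big\|_{\ell^2}^2,
\]
after which Plancherel, the disjointness of the supports of $\vrho_t(\cdot-a/q)$, and \eqref{eq:8} finish in one line. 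Your argument instead re-runs the explicit two-variable $I(x,y),\,J(x,y)$ machinery from the large-$j$ part of Theorem~\ref{thm:6}, reducing to Proposition~\ref{prop:2} by hand. Both approaches ultimately rest on the same three ingredients (periodicity of the phases modulo $Q_t$, the continuous variational bound of Proposition~\ref{prop:2}, and the Gauss sum decay over $\mathscr{Q}_t$); the paper's version is shorter because Proposition~\ref{prop:4} already encapsulates the approximation step you carry out explicitly. A side remark: the paper's route never uses the restriction $j\geq 2^{\kappa_t}$ in an essential way (Proposition~\ref{prop:4} covers all $N\in\NN$), whereas your approximation step $\|I(x,x+u)-I(x,x)\|_{\ell^2}\lesssim 2^{-t\delta}\|f\|_{\ell^2}$ genuinely needs the doubly-exponential tail $\sum_{j\geq 2^{\kappa_t}}2^{-j/d}$ to absorb the factors $Q_t\cdot|\mathscr{Q}_t|$.
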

\begin{proof}
	We notice that
	$$
	\calF^{-1}\big(\Lambda_{2^j}^t \hat{f}\big)(Q_tx+m)
	= \calF^{-1}\big(\Phi_{2^j} \vrho_t F(\cdot\ ; m)\big)(Q_t x + m)
	$$
	where
	$$
	F(\xi; m) = \sum_{a/q \in \mathscr{Q}_t} G(a/q) \hat{f}(\xi + a/q) e^{-2\pi i \sprod{(a/q)}{m}}.
	$$
	By Proposition \ref{prop:4} and Lemma \ref{lem:3} we get
	\begin{multline*}
		\Big\lVert
		\bvar{\big(\seq{\calF^{-1}\big(\Lambda_{2^j}^t \hat{f}\big)}{j \geq 2^{\kappa_t}}\big)}
		\Big\rVert_{\ell^2}^2
		=
		\sum_{m \in \NN^d_{Q_t}}\Big\lVert
		\bvar{\big(\seq{\calF^{-1}
		\big(\Lambda_{2^j}^t \hat{f}\big)(Q_tx+m)}{j \geq 2^{\kappa_t}}\big)}
		\Big\rVert_{\ell^2(x)}^2\\
		\lesssim
		\sum_{m \in \NN^d_{Q_t}}
		\big\lVert
		\calF^{-1}\big(\vrho_t F(\cdot\ ; m) \big)(Q_tx+m)
		\big\rVert_{\ell^2(x)}^2
		=
		\Big\lVert
		\sum_{a/q \in \mathscr{Q}_t}
		G(a/q)
		\calF^{-1}\big(\vrho_t(\cdot - a/q) \hat{f} \big)
		\Big\rVert_{\ell^2}^2.
	\end{multline*}
	Using Plancherel's theorem we may write
	$$
	\Big\lVert
	\sum_{a/q \in \mathscr{Q}_t}
	G(a/q)
	\calF^{-1}\big(\vrho_t(\cdot - a/q) \hat{f}\big)
	\Big\rVert_{\ell^2}^2
	=
	\sum_{a/q \in \mathscr{Q}_t}
	\abs{G(a/q)}^2
	\big\lVert
	\vrho_t(\cdot - a/q) \hat{f}
	\big\rVert_{L^2}^2
	$$
	which, by \eqref{eq:8}, is bounded by $2^{- 2 \delta t} \vnorm{f}_{\ell^2}^2$.
\end{proof}
Finally, we show
\begin{theorem}
	\label{th:4}
	Let $r>2$. There are $\delta_4 > 0$ and $C_r > 0$ such that for any $t \in \NN_0$
	and $f \in \ell^2\big(\ZZ^d\big)$
	$$
	\Big\lVert
	\bvar{\big(\seq{M_{2^j} f - \calF^{-1}\big(\Omega_{2^j}^t \hat{f}\big)}
	{j > 2^{\kappa_t}}\big)}
	\Big\rVert_{\ell^2}
	\leq
	C_r
	2^{-\delta_4 t}
	\vnorm{f}_{\ell^2}.
	$$
\end{theorem}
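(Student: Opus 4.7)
My plan is to decompose the multiplier $m_{2^j} - \Omega_{2^j}^t$ into four pieces, each of which falls under one of the estimates already proved.

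\textbf{Decomposition.} Each $a \in \NN_{Q_t}^d$ reduces to a unique fraction $a'/q$ with $q \mid Q_t$ and $a' \in A_q$, and $G(a/Q_t) = G(a'/q)$. Since $q \mid Q_t = (2^{t+1})!$ holds for every $q \leq 2^{t+1}$, regrouping according to the denominator gives
\begin{equation*}
\Omega_N^t = \sum_{s=0}^{t} \tilde{\nu}_N^s + \Lambda_N^t,
\qquad
\tilde{\nu}_N^s(\xi) = \sum_{a/q \in \mathscr{R}_s} G(a/q)\, \Phi_N(\xi - a/q)\, \vrho_t(\xi - a/q).
\end{equation*}
Consequently $m_{2^j} - \Omega_{2^j}^t = (m_{2^j} - \nu_{2^j}) + \sum_{s=0}^{t}(\nu_{2^j}^s - \tilde{\nu}_{2^j}^s) + \sum_{s>t} \nu_{2^j}^s - \Lambda_{2^j}^t$, and I bound the $\bvar$-norm of each piece separately on $\ell^2$.

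\textbf{The easy pieces.} For the first piece, Lemma \ref{lem:4} gives $\lVert m_{2^j} - \nu_{2^j}\rVert_{\infty} \lesssim 2^{-j \delta_1}$, which by Plancherel and \eqref{eq:18} yields an $\ell^2$-variational bound controlled by $\bigl(\sum_{j>2^{\kappa_t}} 2^{-2j\delta_1}\bigr)^{1/2} \lesssim 2^{-\delta_1 2^{\kappa_t}}$, which is negligible. The $\Lambda$-piece is handled directly by Proposition \ref{prop:3}, giving $\lesssim 2^{-\delta_3 t}\lVert f\rVert_{\ell^2}$. For the tail $\sum_{s > t}\nu_{2^j}^s$, I apply Theorem \ref{thm:6} for each $s$ and use the triangle inequality for the $\bvar$-seminorm to obtain a bound $\sum_{s>t} 2^{-\delta_2 s}\lVert f\rVert_{\ell^2} \lesssim 2^{-\delta_2 t}\lVert f\rVert_{\ell^2}$.

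\textbf{The main obstacle: the mismatched cutoffs.} The subtle piece is $\sum_{s=0}^t(\nu_{2^j}^s - \tilde{\nu}_{2^j}^s)$, where on each summand $a/q \in \mathscr{R}_s$ the two cutoffs $\eta_s(\cdot-a/q)$ and $\vrho_t(\cdot-a/q)$ disagree. Because $Q_{t+1}^{3d} \gg 10^{s+1}$ for every $s \le t$ (by the factorial growth), $\vrho_t$ has much smaller support than $\eta_s$, and $\eta_s \equiv 1$ on $\operatorname{supp}\vrho_t$. Hence $\eta_s(\cdot-a/q) - \vrho_t(\cdot-a/q)$ vanishes on the set $\{\vrho_t(\cdot-a/q)=1\}$, i.e.\ is supported where $|(\xi-a/q)_{\gamma_0}| \ge \tfrac14 Q_{t+1}^{-3d|\gamma_0|}$ for some $\gamma_0 \in \Gamma$. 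For $j \geq 2^{\kappa_t}$, the choice $\kappa_t = 20d(t+1)$ ensures $2^j \gg Q_{t+1}^{3d}$, so
\begin{equation*}
\lVert (2^j)^A(\xi - a/q)\rVert \;\gtrsim\; 2^j\, Q_{t+1}^{-3d}
\quad\text{on } \operatorname{supp}(\eta_s - \vrho_t)(\cdot - a/q),
\end{equation*}
and then \eqref{eq:9} gives $|\Phi_{2^j}(\xi - a/q)| \lesssim (2^j\, Q_{t+1}^{-3d})^{-1/d}$. Using the Gaussian-sum estimate \eqref{eq:8} together with the disjointness of the supports of the $\eta_s(\cdot-a/q)$, I conclude
\begin{equation*}
\lVert \nu_{2^j}^s - \tilde{\nu}_{2^j}^s \rVert_\infty
\;\lesssim\; 2^{-\delta s} (2^j\, Q_{t+1}^{-3d})^{-1/d}.
\end{equation*}
Plancherel and \eqref{eq:18} then bound the corresponding variational $\ell^2$-norm by $2^{-\delta s} Q_{t+1}^{3}\bigl(\sum_{j>2^{\kappa_t}} 2^{-2j/d}\bigr)^{1/2}\lVert f\rVert_{\ell^2}$, and summing over $0 \le s \le t$ gives a bound comparable to $Q_{t+1}^3\,2^{-2^{\kappa_t}/d}\lVert f\rVert_{\ell^2}$, which is much smaller than $2^{-\delta t}$.

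\textbf{Conclusion.} Adding the four contributions and taking $\delta_4 = \tfrac12 \min\{\delta_2,\delta_3\}$ yields the required bound. The essential point, and the only nontrivial step, is the cutoff-mismatch estimate above: it is precisely the enormous gap between $2^{\kappa_t}$ and $\log_2 Q_{t+1}^{3d}$ that forces $\Phi_{2^j}$ to be negligible on the set where the two cutoffs differ.
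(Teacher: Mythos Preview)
Your proof is correct and follows essentially the same route as the paper: the identical four-term decomposition
\[
m_{2^j}-\Omega_{2^j}^t=(m_{2^j}-\nu_{2^j})+\Big(\sum_{s=0}^t\nu_{2^j}^s-\tilde\nu_{2^j}^s\Big)+\sum_{s>t}\nu_{2^j}^s-\Lambda_{2^j}^t,
\]
with the first, third, and fourth pieces handled by Lemma~\ref{lem:4}, Theorem~\ref{thm:6}, and Proposition~\ref{prop:3} respectively, and the cutoff-mismatch term controlled by observing that on $\operatorname{supp}(\eta_s-\vrho_t)(\cdot-a/q)$ one has $\norm{2^{jA}(\xi-a/q)}\gtrsim 2^{j/2}$, hence $\abs{\Phi_{2^j}}\lesssim 2^{-j/(2d)}$. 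Your bookkeeping is in fact slightly cleaner than the paper's (you correctly write $Q_{t+1}$ rather than $Q_t$ in the support condition for $\vrho_t$).
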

\begin{proof}
	Let us notice
	$$
	\Omega_{2^j}^t(\xi)
	=
	\sum_{s = 0}^t \sum_{a/q \in \mathscr{R}_s} G(a/q) \Phi_{2^j}(\xi - a/q) \vrho_t(\xi - a/q)
	+
	\Lambda_{2^j}^t(\xi)
	$$
	and observe that
	\begin{align*}
		m_{2^j}(\xi)-\Omega_{2^j}^t(\xi)
		=\Big(m_{2^j}(\xi) - \sum_{s\ge0}\nu_{2^j}^s(\xi)\Big)
		+\Big(\sum_{s = 0}^t \nu_{2^j}^s(\xi)-\Omega_{2^j}^t(\xi)-\Lambda_{2^j}^t(\xi)\Big)
		+\sum_{s>t}\nu_{2^j}^s(\xi)+\Lambda_{2^j}^t(\xi).
	\end{align*}
	The last two terms are covered by Theorem \ref{thm:6} and Proposition \ref{prop:3}
	respectively, whereas the first term is bounded thanks to Lemma \ref{lem:4} since
	$j \geq 2^{\kappa_t}$. Thus it remains to estimate the second term. First, we observe that
	$\vrho_t(\xi - a/q) - \eta_s(\xi - a/q) \neq 0$ implies that there is $\gamma \in \Gamma$
	such that $\abs{\xi_\gamma - a_\gamma/q} \geq 4^{-1} Q_t^{-3 d\abs{\gamma}}$.
	Therefore, for $j \geq 2^{\kappa_t}$
	$$
	2^{j\abs{\gamma}} \cdot \Big\lvert \xi_{\gamma} - \frac{a_{\gamma}}{q} \Big\rvert
	\gtrsim
	2^j Q_t^{-3d^2} \gtrsim 2^{j/2},
	$$
	and using \eqref{eq:9}, we get
	$$
	\sabs{\Phi_{2^j}(\xi - a/q)} \lesssim 2^{-j/(2d)}.
	$$
	Finally, by \eqref{eq:8} we obtain
	$$
	\Big\lvert
	\Omega_{2^j}^t(\xi) - \Lambda_{2^j}^t(\xi) - \sum_{s = 0}^t \nu_{2^j}^s(\xi)
	\Big\rvert
	\leq
	\sum_{s = 0}^t 2^{-\delta s} \sabs{\Phi_{2^j}(\xi-a/q)}
	\cdot
	\sabs{\vrho_t(\xi-a/q) - \eta_s(\xi - a/q)}
	\lesssim
	2^{-j/(2d)}.
	$$
	This completes the proof of Theorem \ref{th:4}.
\end{proof}

\subsubsection{Small cubes}
Theorem \ref{th:5} will be the main result of this subsection. The proof will based on ideas
of Bourgain \cite{bou}. Bourgain used this restricted type maximal function with logarithmic loss
to obtain the full range of $p \in (1, \infty)$ in his maximal theorem.

\begin{theorem}
	\label{th:5}
	For every $p \in (1, \8]$ there exists a constant $C_p>0$ such that for all $J\in \NN$
	\begin{align}
		\label{eq:41}
		\big\|\sup_{J<j\le 2J}\big|K_{2^{j}}*f\big|\big\|_{\ell^p}
		\leq
		C_p (\log J)
		\|f\|_{\ell^p},
	\end{align}
	for every $f\in\ell^p(\ZZ^d)$.
\end{theorem}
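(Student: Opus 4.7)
The plan is to split the range of $p$ into two regimes, $[2, \infty]$ (with no loss) and $(1, 2)$ (where the logarithmic factor genuinely arises), and treat them by different methods. In the easy regime the estimate holds with a constant independent of $J$. At $p = \infty$ the bound $\sabs{K_{2^j} \ast f(x)} \leq \vnorm{f}_{\ell^\infty}$ is pointwise since $K_{2^j}$ is a probability measure on $\ZZ^d$; at $p = 2$, Theorem \ref{thm:7} directly supplies the full (unrestricted) maximal bound. Viewing the assignment $f \mapsto (K_{2^j} \ast f)_{J < j \leq 2J}$ as a \emph{linear} operator from $\ell^p(\ZZ^d)$ into the mixed-norm space $\ell^p(\ZZ^d;\ell^\infty_j)$, Riesz--Thorin interpolation on the target closes this range with an absolute constant.

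For $p \in (1, 2)$ I would follow Bourgain's strategy, using the approximating multipliers of Section \ref{sec3} to confine the loss to a single summation. By Lemma \ref{lem:4}, write $m_{2^j} = \nu_{2^j} + E_{2^j}$ with $\vnorm{E_{2^j}}_{L^\infty(\TT^d)} \lesssim 2^{-j\delta_1}$, and then $\nu_{2^j} = \sum_{s \geq 0} \nu^s_{2^j}$. Choosing $s_0 \simeq \log J$, the error $E_{2^j}$ and the tail $\sum_{s > s_0} \nu^s_{2^j}$ both have geometric $\ell^2$ decay (in $j$ via Lemma \ref{lem:4} and in $s$ via Theorem \ref{thm:6}); summing them over $j \in (J, 2J]$ and interpolating with the trivial uniform-in-$j$ $\ell^p$ bound (convolution against a probability measure) yields an $O(1)$ contribution on $\ell^p$. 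The main piece $\sum_{s \leq s_0} \nu^s_{2^j}$ is handled by bounding, for each fixed $s$, the maximal operator $\sup_{J < j \leq 2J} \sabs{\calF^{-1}(\nu^s_{2^j} \hat f)}$ on $\ell^p$ by a constant independent of $s$ and $J$, and then summing the $s_0 \simeq \log J$ resulting estimates.

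The main obstacle is this $s$-uniform $\ell^p$ bound for $\nu^s_{2^j}$. The naive triangle inequality summing over $\mathscr{R}_s$ costs a factor $\#\mathscr{R}_s \lesssim 2^{(d+1)s}$ and destroys the logarithmic saving. The remedy is to decouple the arithmetic component (modulations by roots of unity with denominator $\simeq 2^s$) from the analytic component (the oscillatory factor $\Phi_{2^j}$ and the smooth cut-off $\eta_s$) via periodization on the sublattice $Q_s\ZZ^d$, exactly as used in Lemma \ref{lem:3} and Propositions \ref{prop:2}--\ref{prop:4}. After periodization the arithmetic factor absorbs into an $O(1)$ multiplicative constant (the $\ell^p$-norm of a Gauss-sum-weighted Dirac comb), and one is left with a continuous maximal function along $\calQ$ on $L^p(\RR^d)$ which is bounded independently of $s$ by a Stein--Wainger type estimate. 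Combining the three pieces produces the stated bound with constant $C_p \log J$.
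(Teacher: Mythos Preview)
Your approach has a genuine gap in the ``main piece''. The periodization machinery of Lemma~\ref{lem:3} and Propositions~\ref{prop:2}--\ref{prop:4} is built for the multiplier $\Omega_N^t$, not for $\nu_N^s$, and the difference matters in two ways. First, the cutoff: Lemma~\ref{lem:3} requires $\vrho_t(\xi)=\eta\big(Q_{t+1}^{3dA}\xi\big)$, whose support has width $\sim Q_{t+1}^{-3d}$; this is what makes the error term $\lvert Q_t^{-3dA}(m-m')\rvert\le Q_t^{1-3d}$ in that proof summable. The cutoff $\eta_s$ in $\nu_N^s$ has width $\sim 10^{-(s+1)}$, which is enormously larger than $Q_s^{-1}=1/(2^{s+1})!$, so sampling on $Q_s\ZZ^d$ is wildly unstable and the argument collapses. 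Second, the ``Gauss-sum-weighted Dirac comb'' step at the end of Theorem~\ref{th:1} uses that the sum runs over \emph{all} $a\in\NN_{Q_t}^d$, so that $\sum_a e^{-2\pi i(a/Q_t)\cdot x}$ is $Q_t^d$ times the indicator of $Q_t\mid x$ and the counting $\sum_m L_m=Q_t^k$ closes. In $\nu_N^s$ the sum is only over $a/q\in\mathscr{R}_s$ (denominators varying in $[2^s,2^{s+1})$), so no such orthogonality is available and the triangle inequality costs $2^{(d+1)s}$. There is also a secondary issue in your error/tail step: the ``trivial uniform-in-$j$ $\ell^p$ bound'' you invoke is for $K_{2^j}$, not for $\calF^{-1}(E_{2^j}\hat f)$ or $\sum_{s>s_0}\nu^s_{2^j}$ separately, so no second endpoint below $p=2$ is actually available for interpolation.

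The paper avoids these problems entirely by taking Bourgain's original route: dualize to a linearized sum $\sum_{j}\widetilde K_{2^j}*h_j$ in $\ell^r$ with $r=p/(p-1)$ an integer, split $(J,2J]$ into $O(\log J)$ sparse subsets, and prove a uniform $\ell^r$ bound on each sparse piece by induction on $r$. The induction step hinges on the cancellation estimate \eqref{eq:47}, proved by replacing each $\widetilde K_{2^{j_n}}$ by an approximating multiplier $\Upsilon_{2^{j_n}}^{t_n}$ with carefully tuned parameters $t_n$ and using Proposition~\ref{prop:1} to show $m_{2^{j_1}}-m_{2^{j_0}}$ is small on the support forced by the higher-scale factors. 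This is where the sparseness \eqref{eq:44} and the circle method enter, and it bypasses the need for any uniform-in-$s$ $\ell^p$ theory for $\nu^s$.
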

\begin{proof}
	Since we are working with the averaging operator it suffices to prove \eqref{eq:41}
	for $p \in (1, 2]$ and nonegative function $f$. Let $\widetilde{K}_{m}(x)=K_{m}(-x)$. By the
	duality, for every $x \in \ZZ^d$, there is a sequence of nonnegative numbers
	$\big(\seq{g_j(x)}{J < j \leq 2J}\big)$ such that $\sum_{J < j \leq 2J} g_j(x) \le 1$ and
	$$
	\big\| \sup_{J <j \le 2J} |K_{2^j}*f| \big\|_{\ell^p}
	\leq
	2\Big\| \sum_{J<j\le 2J}(K_{2^j}*f)g_j \Big\|_{\ell^p}
	\le
	2\sup_{\|h\|_{\ell^r}\le1}
	\Big\| \sum_{J<j\le 2J}\widetilde{K}_{2^j}*(hg_{j}) \Big\|_{\ell^{r}}
	\|f\|_{\ell^p},
	$$
	where $r=p/(p-1) \ge 2$. Therefore, it suffices to prove that for every $p \in (1, 2]$ with an
	integer $r=p/(p-1)$ and any finite $F\subseteq\ZZ^d$ we have
	$$
	\Big\|
	\sum_{J<j\le 2J}
	\widetilde{K}_{2^j}*h_{j}
	\Big\|_{\ell^{r}}
	\le C_{r} (\log J) |F|^{1/r}
	$$
	where $h_{j}= g_j \ind{F} \ge 0$.

	We partition the set $(J, 2J] \cap \ZZ$ into at most $2 \mu (\log_2 J)$ subsets $S$ with
	the sparseness property
	\begin{align}
		\label{eq:44}
		l, l' \in S, \quad \text{if} \quad l \neq l' \quad \text{then} \quad
		\abs{l - l'} \geq 2\mu (\log_2 J)
	\end{align}
	where $\mu > 0$ is a constant satisfying \eqref{eq:37}. Therefore, it is enough
	to prove that for each integer $r \geq 2$
	\begin{align}
		 \label{eq:45}
		 \Big\|\sum_{j\in S}\widetilde{K}_{2^j}*h_{j}\Big\|_{\ell^{r}}
		 \le C_{r}
		 |F|^{1/r}.
	\end{align}
	We show \eqref{eq:45} by induction with respect to $r$. For $r=2$ we have
	$$
	\Big\| \sum_{j\in S} \widetilde{K}_{2^j}*h_{j} \Big\|_{\ell^{2}}
	\le
	|F|^{1/2}
	\sup_{\|f\|_{\ell^{2}}=1}
	\Big\| \sum_{j\in S}(K_{2^j}*f)g_{j} \Big\|_{\ell^{2}}
	\le
	|F|^{1/2}
	\sup_{\|f\|_{\ell^{2}}=1}
	\big\| \sup_{j\in S}K_{2^j}*f \big\|_{\ell^{2}}\\
	\lesssim
	|F|^{1/2}
	$$
	where in the last step we have used Theorem \ref{thm:7}. For $r > 2$ we expand
	the left-hand side of \eqref{eq:45}. There is a constant $C_r > 0$, which may depend only on
	$r$ and such that
	\begin{equation}
		\label{eq:46}
		\Big\|\sum_{j\in S}\widetilde{K}_{2^j}*h_{j}\Big\|_{\ell^{r}}^{r}
		\leq
		C_r
		\sum_{J<j_1<\ldots<j_{r}\le 2J}
		\sum_{x\in\ZZ^d}
		\prod_{n=1}^{r}
		\widetilde{K}_{2^{j_n}}*h_{j_n}(x)
		+
		C_r \sum_{x\in\ZZ^d} \Big(\sum_{j\in S}\widetilde{K}_{2^{j}}*h_{j}(x)\Big)^{r-1}.
	\end{equation}
	To treat the first term in \eqref{eq:46} we need to prove that for any increasing sequence
	$J = j_0 < j_1 < \ldots < j_r \leq 2J$
	\begin{align}
		\label{eq:47}
		\Big\|\Big(\prod_{n=2}^{r} \widetilde{K}_{2^{j_n}}*h_{j_n}\Big)
		*\big(K_{2^{j_1}}-K_{2^{j_0}}\big)\Big\|_{\ell^2}
		\lesssim
		J^{-r} |F|^{1/2}
	\end{align}
	Assuming momentarily \eqref{eq:47}  we would have
	$$
	\Big\lvert
	\sum_{x \in \ZZ^d}
	\big(\widetilde{K}_{2^{j_1}} - \widetilde{K}_{2^{j_0}}\big)*h_{j_1}(x)
	\prod_{n=2}^{r}
	\widetilde{K}_{2^{j_n}}*h_{j_n}(x)
	\Big\rvert
	\le
	|F|^{1/2}
	\Big\|
	\Big(\prod_{n=2}^{r}\widetilde{K}_{2^{j_n}}*h_{j_n}\Big)
	*\big(K_{2^{j_1}}-K_{2^{j_0}}\big)
	\Big\|_{\ell^2}
	\lesssim J^{-r}|F|,
	$$
	thus
	\begin{multline}
		\label{eq:49}
		\sum_{J<j_1<\ldots<j_{r}\le 2J}
		\sum_{x\in\ZZ^d}
		\prod_{n=1}^{r}
		\widetilde{K}_{2^{j_n}}*h_{j_n}(x)\\
		\leq
		|F|
		+\sum_{J<j_1<\ldots<j_{r}\le 2J}
		\sum_{x\in\ZZ^d}
		\widetilde{K}_{2^{j_0}}*h_{j_1}(x)
		\prod_{n=2}^{r} \widetilde{K}_{2^{j_n}}*h_{j_n}(x)\\
		\leq
		|F|
		+\sum_{x\in\ZZ^d}
		\Big(\widetilde{K}_{2^{j_0}}*\sum_{j\in S}h_{j}\Big)(x)
		\Big(\sum_{j\in S} \widetilde{K}_{2^{j}}*h_{j}(x)\Big)^{r-1} \\
		\leq
		|F|
		+\sum_{x\in\ZZ^d}
		\Big(\sum_{j\in S}\widetilde{K}_{2^{j}}*h_{j}(x)\Big)^{r-1}.
	\end{multline}
	Hence, by the inductive hypothesis we obtain
	$$
	\Big\|
	\sum_{j\in S}
	\widetilde{K}_{2^j}*h_{j}
	\Big\|_{\ell^{r}}^{r}
	\lesssim
	|F|
	+\Big\|\sum_{j\in S}\widetilde{K}_{2^{j}}*h_{j}\Big\|_{\ell^{r-1}}^{r-1}
	\lesssim |F|.
	$$
	This completes the proof and shows that \eqref{eq:45} holds.
	
	It remains to prove the bound \eqref{eq:47}. First, we introduce approximating multipliers
	$$
	\Upsilon_N^t(\xi)
	=
	\sum_{s = 0}^t
	\sum_{a/q\in \mathscr{R}_s}
	G(a/q)
	\Phi_N(\xi-a/q)
	\eta_s(\xi-a/q)
	\vrho_N(\xi-a/q)
	$$
	where $\vrho_N(\xi)=\eta\big((J^{-\mu} N)^A\xi\big)$. Then, by Lemma \ref{lem:4} and estimates
	\eqref{eq:8} and \eqref{eq:9}
	\begin{align}
		\label{eq:58}
		\big| m_{N}(\xi)-\Upsilon_N^t(\xi)\big|
   		\lesssim
		N^{-\delta_1}+2^{-t \delta}+ J^{-\mu/d}.
	\end{align}
	Moreover,
	\begin{align}
		\label{eq:59}
		\big\|\calF^{-1}\big(\Upsilon_N^t \big)\big\|_{\ell^1} \lesssim 2^{t(d+1)}.
	\end{align}
	For each $n \in \NN$ we set
	$$
	t_n= \max\big\{r, (2 d + 3) \delta^{-1} \big\}^n (\log_2 J)
	$$
	By Plancherel's theorem and \eqref{eq:58} we have
	\begin{align}
		\label{eq:51}
		\big\|\widetilde{K}_{2^{j_n}}*h_{j_n}
		-\calF^{-1}\big(\overline{\Upsilon}_{2^{j_n}}^{t_n} \hat{h}_{j_n} \big)
		\big\|_{\ell^2}
		\lesssim
		\big(2^{-j_n\delta_1}+2^{-t_n \delta}+ J^{-\mu/d}\big)
		|F|^{1/2}.
	\end{align}
	Moreover, by \eqref{eq:59}, for every $x\in \ZZ^d$ we have
	\begin{align}
		\label{eq:52}
		\big|
		\calF^{-1}\big(\overline{\Upsilon}_{2^{j_n}}^{t_n} \hat{h}_{j_n} \big)(x)
		\big|
		\le
		\big\|
		\calF^{-1}\big(\overline{\Upsilon}_{2^{j_n}}^{t_n} \big)
		\big\|_{\ell^1}
		\leq
		C2^{t_n(d+1)}.
	\end{align}
	Let us denote by $\calW$ the support of
	$\big(\overline{\Upsilon}_{2^{j_2}}^{t_2} \hat{h}_{j_2}\big) * \ldots *
	\big(\overline{\Upsilon}_{2^{j_r}}^{t_r} \hat{h}_{j_r}\big)$. Then
	$$
	\calW \subseteq
	\bigcup_{q = 1}^{2^{r t_r}}
	\bigcup_{a \in \NN_q^d}
	\big\{\xi \in \TT^d : \lvert \xi_\gamma - a_\gamma/q\rvert <
		2^{- j_2\abs{\gamma}} J^{\mu \abs{\gamma}} \text{ for all } \gamma \in \Gamma\big\}.
	$$
	Furthermore, by H\"{o}lder's inequality and \eqref{eq:52} we have
	\begin{multline}
		\label{eq:54}
		\Big\|
		\prod_{n=2}^r
		\calF^{-1} \big(\overline{\Upsilon}_{2^{j_n}}^{t_n} \hat{h}_{j_n}\big)
		\Big\|_{\ell^{2}}
		\leq
		\prod_{n=2}^{r}
		\big\|\calF^{-1}
		\big(\overline{\Upsilon}_{2^{j_n}}^{t_n} \hat{h}_{j_n}\big)
		\big\|_{\ell^{2(r-1)}}
		\leq
		\prod_{n=2}^{r}
		\big\|\calF^{-1}\big(\overline{\Upsilon}_{2^{j_n}}^{t_n} \big)\big\|_{\ell^{1}}
	    \prod_{n=2}^{r}
		\|h_{j_n}\|_{\ell^{2(r-1)}}\\
		\lesssim
		2^{(t_2+\ldots+t_{r})(d+1)}|F|^{1/2}
		\lesssim
		2^{2 t_r (d+1)} |F|^{1/2}
	\end{multline}
	because $t_2 + \ldots + t_r \leq 2 t_r$. Next, by \eqref{eq:44} we have
	$j_2 - j_1 \geq 2\mu (\log_2 J)$ thus
	$$
	J^\mu 2^{-j_2} \leq 2^{-j_1}.
	$$
	In particular, if $\xi \in \calW$ then there are $1 \leq q \leq 2^{r t_r}$ and $a \in \NN_q^d$
	such that for each $\gamma\in \Gamma$ we have
	\begin{align*}
		\Big|\xi_{\gamma}-\frac {a_{\gamma}}{q}\Big|
		\leq J^{\mu \abs{\gamma}} 2^{- j_2\abs{\gamma} }
		\leq 2^{-j_1(\abs{\gamma} - \beta)}.
	\end{align*}
	Since $2^{r t_r} \leq 2^{\alpha J}$ for sufficiently large $J$, by Proposition \ref{prop:1} and
	\eqref{eq:10}, we have
	\begin{multline}
		\label{eq:55}
		\big|m_{2^{j_1}}(\xi) - m_{2^{j_0}}(\xi)\big|
		\leq
		|\Phi_{2^{j_1}}(\xi-a/q)-1| + |\Phi_{2^{j_0}}(\xi-a/q)-1| + 2^{-j_0/4} + 2^{-j_1/4}\\
		\lesssim
		\big\lvert 2^{j_1 A}(\xi - a/q) \big\rvert
		+\big\lvert 2^{j_0 A}(\xi - a/q) \big\rvert
		+2^{-J/4}
		\lesssim
		J^{- \mu}.
	\end{multline}
	Next, we may estimate
	\begin{multline}
		\label{eq:60}
		\Big\|\Big(\prod_{n=2}^{r}\widetilde{K}_{2^{j_n}}*h_{j_n}\Big)
		*\big(K_{2^{j_1}}-K_{2^{j_0}}\big)\Big\|_{\ell^2}\\
		\leq
		\sum_{n=2}^{r}
		\prod_{k=2}^{n-1}
		\big\|
		\calF^{-1}\big(\overline{\Upsilon}_{2^{j_{k}}}^{t_{k}} \hat{h}_{j_{k}}\big)
		\big\|_{\ell^{\8}}
		\cdot
		\big\|
		\widetilde{K}_{2^{j_n}}*h_{j_n}
		- \calF^{-1} \big(\overline{\Upsilon}_{2^{j_n}}^{t_n} \hat{h}_{j_n}\big)
		\big\|_{\ell^{2}}\\
		+ \Big\|\prod_{n=2}^{r}
		\calF^{-1}\big(\overline{\Upsilon}_{2^{j_n}}^{t_n} \hat{h}_{j_n}\big)
		\Big\|_{\ell^{2}}
		\cdot
		\sup_{\xi \in \calW}
		\big|m_{2^{j_1}}(\xi)-m_{2^{j_0}}(\xi)\big|.
	\end{multline}
	By \eqref{eq:51} and \eqref{eq:52} the first term in \eqref{eq:60} is bounded by
	\begin{multline*}
		\sum_{n=2}^{r}
		\Big(\prod_{k = 2}^{n-1} 2^{(d+1) t_{k}}\Big)
		\big(2^{-\delta t_n} + 2^{-j_n\delta_1} + J^{-\mu/d} \big)
		|F|^{1/2}\\
		\lesssim
		\sum_{n=2}^{r}
		2^{2(d+1) t_{n-1} - \delta t_n}
		|F|^{1/2}
		+2^{2(d+1) t_r} \big( 2^{-J\delta_1} + J^{-\mu/d}\big) |F|^{1/2}\\
		\lesssim
		2^{-t_1}
		|F|^{1/2}
		+2^{2(d+1) t_r} \big( 2^{-J\delta_1} + J^{-\mu/d}\big) |F|^{1/2}.
	\end{multline*}
	Moreover, by \eqref{eq:54} and \eqref{eq:55} the second term in \eqref{eq:60} is
	bounded by $2^{2(d+1) t_r} J^{- \mu} |F|^{1/2}$. Since $2^{-t_1} \leq J^{-r}$ it is enough
	to select $\mu$ satisfying
	\begin{equation}
		\label{eq:37}
		\mu \geq 2 (d+1)^3 \max\big\{r , (2d+3) \delta^{-1}\big\}^r.
	\end{equation}
	This completes the proof of Theorem \ref{th:5}.
\end{proof}

\section{Maximal theorem}\label{sec4}
We are ready to proof Theorem \ref{thm:1}. In view of Lemma \ref{lem:1} it is enough to show
\begin{theorem}
	\label{th:3}
	Let $p \in (1, \infty)$. There is $C_p > 0$ such that for every
	$f \in \ell^p\big(\ZZ^d\big)$
	\begin{equation}
		\label{eq:27}
		\big\lVert
		\sup_{N \in \NN} \big\lvert M_N^\calQ f \big\rvert
		\big\rVert_{\ell^p}
		\leq
		C_p
		\vnorm{f}_{\ell^p}.
	\end{equation}
\end{theorem}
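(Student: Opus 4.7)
By Lemma \ref{lem:1} it suffices to treat the canonical mapping $\calQ$, and the case $p = 2$ is Theorem \ref{thm:7}, so fix $p \in (1, \infty) \setminus \{2\}$. The kernel $K_N$ being nonnegative, we may restrict to $f \geq 0$, and since $\sup_{N \in \NN} M_N f \lesssim \sup_{j \in \NN_0} M_{2^j} f$ we may furthermore restrict the supremum to $N \in \calD$. The task is therefore to bound $\sup_{j \geq 0} M_{2^j} f$ in $\ell^p(\ZZ^d)$.

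The plan is to implement the Ionescu--Wainger splitting sketched in the Introduction, using the approximating multipliers $\Omega_N^t$ from \eqref{eq:71} as the ``good'' part. Fix $t \in \NN_0$ (to be chosen large in terms of $p$), let $\kappa_t = 20d(t+1)$ and $J_t = 2^{\kappa_t}$, and decompose
\begin{equation*}
\sup_{j \geq 0} M_{2^j} f
\leq
S_{\mathrm{low}}^t f + S_{\mathrm{mult}}^t f + E^t f,
\end{equation*}
where $S_{\mathrm{low}}^t f = \sup_{0 \leq j \leq J_t} M_{2^j} f$, $S_{\mathrm{mult}}^t f = \sup_{j > J_t} \sabs{\calF^{-1}(\Omega_{2^j}^t \hat f)}$, and $E^t f = \sup_{j > J_t} \sabs{M_{2^j} f - \calF^{-1}(\Omega_{2^j}^t \hat f)}$. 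Three ingredients then apply. Partitioning $[0, J_t] \cap \NN_0$ into $O(t)$ dyadic subblocks of the form $(2^\ell, 2^{\ell+1}]$ with $\ell \leq \kappa_t$ and applying Theorem \ref{th:5} on each produces $\|S_{\mathrm{low}}^t f\|_{\ell^p} \lesssim_p t^2 \|f\|_{\ell^p}$. Theorem \ref{th:1}, together with the trivial inequality $\sup \leq \bvar{\,\cdot\,}$, yields $\|S_{\mathrm{mult}}^t f\|_{\ell^p} \lesssim_p \|f\|_{\ell^p}$ uniformly in $t$. Finally, Theorem \ref{th:4} supplies the decisive $\ell^2$ decay $\|E^t f\|_{\ell^2} \lesssim 2^{-\delta_4 t} \|f\|_{\ell^2}$.

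The crux of the argument is upgrading the $\ell^2$ estimate on $E^t$ to an $\ell^p$ estimate whose constant does not depend on $t$. This will be done via the restricted weak-type interpolation scheme of Ionescu--Wainger \cite{IW}: test $E^t$ against an indicator $f = \ind{F}$, use Chebyshev with the $\ell^2$ bound to control the high-level set $\sabs{\{x : E^t \ind F(x) > \lambda\}}$ by $\lambda^{-2} 2^{-2\delta_4 t} \sabs{F}$, and control the complement by a crude $\ell^{p_0}$ majorant of $E^t \ind F$ assembled from $S_{\mathrm{low}}^t \ind F$ and $S_{\mathrm{mult}}^t \ind F$ (costing only a polynomial factor in $t$). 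Since the exponential gain in $t$ coming from $\ell^2$ dominates this polynomial loss, optimising the threshold $\lambda$ and then choosing $t$ large in terms of $p$ absorbs all $t$-dependence and produces $\|E^t f\|_{\ell^p} \lesssim_p \|f\|_{\ell^p}$; combined with the bounds on $S_{\mathrm{low}}^t$ and $S_{\mathrm{mult}}^t$ this yields \eqref{eq:27}.

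The main obstacle is the interpolation step itself. The operator $E^t$ is sublinear, so Riesz--Thorin is not directly available and one must first linearise $E^t$ through a measurable selection of the maximising index $j(x) > J_t$. A more delicate point is that the ``crude $\ell^{p_0}$ majorant'' implicitly involves $\sup_{j > J_t} M_{2^j} \ind F$ --- essentially the quantity we are trying to bound --- but this apparent circularity is broken by keeping $t$ fixed throughout the interpolation, so that the crude bound is only required to be polynomial in $t$ and is supplied by Theorem \ref{th:5} together with Theorem \ref{th:1} applied at the fixed level $t$.
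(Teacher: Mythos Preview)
Your decomposition $\sup_j M_{2^j} f \le S_{\mathrm{low}}^t f + S_{\mathrm{mult}}^t f + E^t f$ and the three inputs (Theorem~\ref{th:5} for $S_{\mathrm{low}}^t$, Theorem~\ref{th:1} for $S_{\mathrm{mult}}^t$, Theorem~\ref{th:4} for the $\ell^2$ decay of $E^t$) are exactly the ones the paper uses. The discrepancy is in how the interpolation is organised, and there the proposal has a real gap.

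You claim that the low-level set of $E^t\ind F$ can be controlled by ``a crude $\ell^{p_0}$ majorant of $E^t \ind F$ assembled from $S_{\mathrm{low}}^t \ind F$ and $S_{\mathrm{mult}}^t \ind F$''. But $E^t$ is \emph{not} majorised by $S_{\mathrm{low}}^t+S_{\mathrm{mult}}^t$; by definition $E^t f \le \sup_{j>J_t} M_{2^j}|f| + S_{\mathrm{mult}}^t f$, and the first term is the \emph{high} tail of the maximal function, not $S_{\mathrm{low}}^t$. Theorem~\ref{th:5} only controls finite dyadic blocks $(J,2J]$ with a $\log J$ loss, so summing over $J=2^k$ with $k\ge \kappa_t$ diverges and gives no $\ell^{p_0}$ bound for $\sup_{j>J_t} M_{2^j}$. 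Thus, for $p<2$, you have no second endpoint for $E^t$ to interpolate against the $\ell^2$ bound, and the ``apparent circularity'' you flag is genuine: bounding $E^t$ in $\ell^{p_0}$ with $p_0<2$ is essentially the maximal theorem itself. (For $p>2$ one can get away with the trivial $\ell^\infty$ endpoint, but that range already follows from Theorem~\ref{thm:7} and interpolation with $\ell^\infty$.)

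The paper resolves this by \emph{not} fixing $t$. It invokes Lemma~\ref{lem:0}: for each height $\lambda>1$ one chooses $t=t(\lambda)\simeq \delta_4^{-1}\log_2\lambda$ and sets $A_j^{\lambda,\epsilon}=M_{2^j}$ for $j<2^{\kappa_t}$ and $A_j^{\lambda,\epsilon}=\calF^{-1}(\Omega_{2^j}^t\,\widehat{\cdot}\,)$ for $j\ge 2^{\kappa_t}$. Then the error $\sup_j|M_{2^j}f-A_j^{\lambda,\epsilon}f|$ vanishes on the low block, so no $\ell^{p_0}$ control of $E^t$ is ever needed; the only requirements are the $\ell^r$ bound on $A_j^{\lambda,\epsilon}$ (which picks up $t^2\lesssim \lambda^\epsilon$ from Theorem~\ref{th:5} on the low block and $O(1)$ from Theorem~\ref{th:1} on the high block) and the $\ell^2$ bound $2^{-\delta_4 t}\lesssim\lambda^{-1}$ on the error from Theorem~\ref{th:4}. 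The coupling $t\sim\log\lambda$ is precisely what converts the polynomial loss into $\lambda^\epsilon$ and the exponential gain into $\lambda^{-1}$, after which the restricted interpolation of \cite{IW} closes the argument.
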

	Let us observe that the supremum in \eqref{eq:27} may be restricted to the set of dyadic
	numbers $\mathcal{D}$. As we mentioned in the introduction we shall explore restricted
	interpolation lemma of Ionescu and Wainger introduced in \cite{IW} (see also \cite{IMSW}).
	Namely,
	\begin{lemma}
		\label{lem:0}
		Suppose for each $r \in (1,2]$, $\epsilon \in (0, 1]$ and $\lambda > 0$ there is a
		sequence of linear operators $\big(A_j^{\lambda,\epsilon}: j \in \NN\big)$ such that
		$$
		\big\lVert \sup_{j \in \NN}
		\big\lvert
		A_j^{\lambda,\epsilon} f
		\big\rvert
		\big\rVert_{\ell^r}
		\leq
		C_{\epsilon, r} \lambda^\epsilon \vnorm{f}_{\ell^r}
		\quad \text{and} \quad
		\big\lVert \sup_{j \in \NN}
		\big\lvert
		M_{2^j} f - A_j^{\lambda, \epsilon} f
		\big\rvert
		\big\rVert_{\ell^2}
		\leq
		D_\epsilon \lambda^{-1} \vnorm{f}_{\ell^2}.
		$$
		Then for each $p \in (1, 2]$ there exists a constant $C_p > 0$
		$$
		\big\lVert \sup_{j \in \NN} \big\lvert M_{2^j} f \big\rvert \big\rVert_{\ell^p}
		\leq
		C_p \vnorm{f}_{\ell^p}.
		$$
	\end{lemma}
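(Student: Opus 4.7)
The plan is to prove restricted weak type $(p, p)$ for each $p \in (1, 2)$ by testing against characteristic functions, and then upgrade to strong type via Marcinkiewicz real interpolation. The endpoint $p = 2$ follows immediately by taking $r = 2$ and any fixed $\lambda > 0$ in the two hypotheses and combining them with the triangle inequality for the supremum.

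To obtain the restricted weak type bound, fix $p \in (1, 2)$ and finite sets $E, F \subset \ZZ^d$. Write $T f = \sup_{j \in \NN} \sabs{M_{2^j} f}$. Sublinearity gives $T\ind{E} \le \sup_j \sabs{A_j^{\lambda,\epsilon} \ind{E}} + \sup_j \sabs{M_{2^j}\ind{E} - A_j^{\lambda,\epsilon}\ind{E}}$. Pairing with $\ind{F}$ and applying H\"older's inequality with the conjugate pair $(r, r')$ on the first piece and with $(2, 2)$ on the second, the two hypothesized estimates yield
\begin{equation*}
\sum_{x \in F} T\ind{E}(x)
\le C_{\epsilon,r} \lambda^{\epsilon} \abs{E}^{1/r} \abs{F}^{1/r'}
+ D_\epsilon \lambda^{-1} \abs{E}^{1/2} \abs{F}^{1/2}
\end{equation*}
valid for every $r \in (1, 2]$, $\epsilon \in (0, 1]$, $\lambda > 0$. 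Optimizing in $\lambda$ (the two summands balance at $\lambda^{1+\epsilon} = (\abs{F}/\abs{E})^{1/r - 1/2}$) produces a bound of the shape $C_{p,r} \abs{E}^{\theta/r + (1-\theta)/2}\abs{F}^{\theta/r' + (1-\theta)/2}$ with $\theta := 1/(1+\epsilon)$.

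The calibration step is to choose $r$ together with the induced $\epsilon$ so that $1/p = \theta/r + (1-\theta)/2$, equivalently $\theta = (2/p - 1)/(2/r - 1)$. A short check shows this is solvable for every $p \in (1, 2)$ by taking $r \in [\max\{1, 2p/(4-p)\}, p)$; the lower bound on $r$ is exactly what is needed to keep $\theta \ge 1/2$, i.e.\ $\epsilon \le 1$. With this choice the displayed estimate becomes
\begin{equation*}
\sum_{x \in F} T\ind{E}(x) \le C_p \abs{E}^{1/p} \abs{F}^{1/p'},
\end{equation*}
and plugging in $F = \{x : T\ind{E}(x) > \alpha\}$ yields $\abs{\{T\ind{E} > \alpha\}} \le (C_p/\alpha)^p \abs{E}$, the restricted weak type $(p, p)$ inequality.

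Given such a bound at every $p \in (1, 2)$, Marcinkiewicz real interpolation between restricted weak type $(p_1, p_1)$ and $(p_2, p_2)$ with $1 < p_1 < p_0 < p_2 < 2$ upgrades each $p_0 \in (1, 2)$ to strong type; combined with strong type at $p = 2$, this delivers the full conclusion. The main obstacle is the calibration bookkeeping: one has to verify that the hypothesis constraint $\epsilon \in (0, 1]$ is compatible with the optimizing choice throughout $(1, 2)$, which forces $r \ge 2p/(4-p)$ when $p$ is close to $2$, but this interval is always non-empty since $2p/(4-p) < p$ for $p < 2$.
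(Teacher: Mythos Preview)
The paper does not actually give a proof of this lemma: it is stated as the ``restricted interpolation lemma of Ionescu and Wainger introduced in \cite{IW} (see also \cite{IMSW})'' and then immediately applied in the proof of Theorem~\ref{th:3}. Your argument---test on characteristic functions, optimize in $\lambda$ to obtain a restricted weak type $(p,p)$ bound, then interpolate---is precisely the standard Ionescu--Wainger scheme, and it is correct. One cosmetic point: when $p\le 4/3$ your admissible interval is written as $[1,p)$, but of course you need $r>1$; the argument is unaffected since any $r\in(1,p)$ gives $\theta\in(1/2,1)$, hence $\epsilon\in(0,1)$.
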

\begin{proof}[Proof of Theorem \ref{th:3}]
	Let $\epsilon \in (0, 1]$.  If $\lambda \leq 1$ then we may take $A_j^{\lambda, \epsilon} = 0$
	since by Theorem \ref{thm:7}
	\begin{align*}
		\big\lVert \sup_{j \in \NN}
		\big\lvert
		M_{2^j} f - A_j^{\lambda, \epsilon} f
		\big\rvert
		\big\rVert_{\ell^2}
		\leq
		C \lambda^{-1} \vnorm{f}_{\ell^2}.
	\end{align*}
	For $\lambda > 1$ we choose $t \in \NN$ such that
	$$
	t = \big\lfloor \delta_4^{-1} \log_2 \lambda \big\rfloor + 1
	$$
	where $\delta_4>0$ is the exponent from Theorem \ref{th:4}. Let $\kappa_t$ be defined
	by \eqref{eq:77}. If $j < 2^{\kappa_t}$ then we set $A_j^{\lambda, \epsilon} = M_{2^j}$.
	By Theorem \ref{th:5} we may write
	$$
	\big\lVert
	\sup_{j < 2^{\kappa_t}}
	\big\lvert
	M_{2^j} f
	\big\rvert
	\big\rVert_{\ell^r}
	\leq
	\sum_{k = 1}^{\kappa_t}
	\big\lVert
	\sup_{2^{k-1} \leq j < 2^k}
	\big\lvert
	M_{2^j} f
	\big\rvert
	\big\rVert_{\ell^r}
	\lesssim
	\sum_{k = 1}^{\kappa_t}
	k \cdot \vnorm{f}_{\ell^r}
	\lesssim
	t^2 \cdot \vnorm{f}_{\ell^r}
	\lesssim \lambda^{\epsilon}\vnorm{f}_{\ell^r}.
	$$
	For $j \geq 2^{\kappa_t}$ we define $A_j^{\lambda, \epsilon}=\Omega_{2^j}^t$. Then by
	Theorem \ref{th:1} and Theorem \ref{th:4} we have
	\begin{align*}
		\big\lVert
		\sup_{j \geq 2^{\kappa_t}}
		\big\lvert
		\mathcal{F}^{-1}\big(\Omega_{2^j}^t \hat{f}\big)
		\big\rvert
		\big\rVert_{\ell^r}
		\lesssim \vnorm{f}_{\ell^r}
		\quad \text{and} \quad
		\big\lVert
		\sup_{j \geq 2^{\kappa_t}}
		\big\lvert
		M_{2^j} f - \mathcal{F}^{-1}\big(\Omega_{2^j}^t \hat{f}\big)
		\big\rvert
		\big\rVert_{\ell^2}
		\lesssim \lambda^{-1} \vnorm{f}_{\ell^2}.
	\end{align*}
	This completes the proof of Theorem \ref{th:3}.
\end{proof}

\section{Variational theorem}
\label{sec5}
In this section we prove Theorem \ref{thm:2}. Again, using Lemma \ref{lem:1} it is enough
show
\begin{theorem}
	\label{th:7}
	Let $p \in (1, \infty)$ and $r > \max\{p, p/(p-1)\}$. There is $C_{p,r} > 0$ such that for each
	$f \in \ell^p\big(\ZZ^d\big)$
	\begin{equation}
		\label{eq:79}
		\big\lVert
		\var{\big(\seq{M_N^\calQ f}{N \in \NN}\big)}
		\big\rVert_{\ell^p}
		\le C_{p, r}
		\vnorm{f}_{\ell^p}.
	\end{equation}
\end{theorem}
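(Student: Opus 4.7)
The plan is to invoke Lemma \ref{lem:1} to reduce to the canonical polynomial mapping $\calQ$, so it suffices to prove \eqref{eq:79} with $M_N = M_N^\calQ$. I first establish the inequality at $p = 2$ for every $r > 2$, and then extend to the full range by the interpolation scheme of Krause from \cite{K}. Throughout, I split the variational norm via \eqref{eq:32} into a long variation $V_r^L$ along the dyadic numbers and a short variation $V_r^S$ summed over the dyadic blocks $A_n = [2^n, 2^{n+1})$.

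For the long variation at $p = 2$, the idea is to replace $m_{2^j}$ by the approximating multiplier $\nu_{2^j}$ from \eqref{eq:22}. By Lemma \ref{lem:4},
\[
\bigl\lVert M_{2^j} f - \calF^{-1}\bigl(\nu_{2^j}\,\hat f\bigr) \bigr\rVert_{\ell^2} \lesssim 2^{-j\delta_1}\,\lVert f\rVert_{\ell^2},
\]
so \eqref{eq:18} bounds the $\ell^2$ norm of $V_r$ of the error sequence by $\bigl(\sum_j 2^{-2j\delta_1}\bigr)^{1/2}\lVert f\rVert_{\ell^2} \lesssim \lVert f\rVert_{\ell^2}$. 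For the main part I expand $\nu_{2^j} = \sum_{s \geq 0} \nu_{2^j}^s$, use the triangle inequality for $V_r$, and apply Theorem \ref{thm:6} to each $s$; since Theorem \ref{thm:6} gives decay $2^{-s\delta_2}$, summing over $s$ converges and produces the bound $\lesssim \lVert f\rVert_{\ell^2}$.

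For the short variation at $p = 2$ I use $V_r^S \leq V_2^S$ (valid because $r \geq 2$) together with the identity
\[
\bigl\lVert V_2^S(M_N f : N \in \NN) \bigr\rVert_{\ell^2}^2 = \sum_{n \geq 0} \bigl\lVert V_2\bigl(M_N f : N \in A_n\bigr) \bigr\rVert_{\ell^2}^2.
\]
I estimate each summand via \eqref{eq:39} with $A \lesssim \lVert f\rVert_{\ell^2}$ (from Theorem \ref{thm:7}) and $B_n = \max_{N \in A_n}\lVert M_{N+1} f - M_N f\rVert_{\ell^2}$. The decay of $B_n$ beyond the trivial $O(1/N)$ operator-norm bound is the technically most delicate step; one obtains it by replacing $m_{N+1} - m_N$ by $\nu_{N+1} - \nu_N$ via Lemma \ref{lem:4}, and then exploiting the Gauss sum decay \eqref{eq:8} jointly with the regularity estimates \eqref{eq:9}--\eqref{eq:10} for $\Phi_N$.

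Once \eqref{eq:79} is proved at $p = 2$ for all $r > 2$, the passage to the full range $p \in (1, \infty)$ and $r > \max\{p, p/(p-1)\}$ follows from Krause's interpolation argument in \cite{K}: the $p = 2$ variational bound is interpolated against the maximal bounds of Theorem \ref{th:3}, which are available for every $p \in (1, \infty)$ with constants independent of the coefficients of $\calP$. The asymmetric endpoint $r > \max\{p, p/(p-1)\}$ emerges naturally from the interpolation exponents and the $p \leftrightarrow p/(p-1)$ duality. The main obstacle in the entire scheme is the short variation estimate at $p = 2$, namely obtaining sharp enough $\ell^2$ decay of $B_n$; everything else either reduces to results already proved in Sections \ref{sec3}--\ref{sec4} or is a routine application of Krause's abstract machinery, provided one verifies that uniformity of constants in Theorem \ref{th:1} and Theorem \ref{th:3} is preserved under the interpolation.
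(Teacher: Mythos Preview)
Your long-variation argument at $p=2$ and the passage to general $p$ via Krause's interpolation against Theorem \ref{th:3} are exactly what the paper does, and those parts are fine.

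The short-variation argument, however, has a genuine gap. Applying \eqref{eq:39} on the whole block $A_n=[2^n,2^{n+1})$ with $A\lesssim\lVert f\rVert_{\ell^2}$ and $B_n\lesssim 2^{-n}\lVert f\rVert_{\ell^2}$ yields only
\[
\bigl\lVert V_2(M_Nf:N\in A_n)\bigr\rVert_{\ell^2}\ \lesssim\ 2^{n/2}A^{1/2}B_n^{1/2}\ \lesssim\ \lVert f\rVert_{\ell^2},
\]
which does not sum in $n$. You propose to improve $B_n$ beyond $O(2^{-n})$ by writing $m_{N+1}-m_N=(\nu_{N+1}-\nu_N)+O(N^{-\delta_1})$ via Lemma \ref{lem:4}, but this cannot succeed: the exponent $\delta_1$ produced by Weyl's inequality is small (certainly $\delta_1<1$), so the Lemma \ref{lem:4} error $N^{-\delta_1}$ is \emph{larger} than the trivial $N^{-1}$ bound you are trying to beat. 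Moreover, even for the main term $\nu_{N+1}-\nu_N$ there is no extra gain available in sup-norm: the $s=0$ summand has $G(0)=1$, so \eqref{eq:8} gives nothing, and $\lvert\Phi_{N+1}(\theta)-\Phi_N(\theta)\rvert$ is of order $N^{-1}$ on the critical region $\lvert N^A\theta\rvert\simeq 1$. Thus no global improvement of $B_n$ is possible.

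The paper resolves this by \emph{not} applying \eqref{eq:39} globally. Instead it decomposes $f=\Pi_{\mathfrak{M}_{2^n}}f+\Pi_{\mathfrak{m}_{2^n}}f$ for each $n$. On the minor arcs Weyl's inequality gives decay of the $A$-quantity itself, $\lVert M_N(\Pi_{\mathfrak{m}_{2^n}}f)\rVert_{\ell^2}\lesssim 2^{-n\delta'}\lVert f\rVert_{\ell^2}$, and then \eqref{eq:39} produces a summable bound. On the major arcs a further annular decomposition $\mathfrak{N}_{2^n,s}^u$ is introduced; on each annulus Proposition \ref{prop:1} with \eqref{eq:9}--\eqref{eq:10} controls $\lvert m_N-m_{2^n}\rvert$ (see \eqref{eq:40}), \eqref{eq:39} is applied piecewise, and the pieces are summed using the disjointness of the $\mathfrak{N}_{2^n,s}^u$ as $n$ varies. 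The localization is what converts the borderline $A\cdot B\sim 2^{-n}$ situation into a summable one; without it the short-variation step does not close.
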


\begin{proof}
	We only prove \eqref{eq:79} for $p=2$. In order to obtain \eqref{eq:79}  for $p \in (1, \infty)$ and $r > \max\{p, p/(p-1)\}$ it suffices to repeat the argument form \cite{K}, and interpolate the estimate \eqref{eq:27} with the  estimate \eqref{eq:79} for $p = 2$. To prove the inequality \eqref{eq:79} for $p = 2$,
	we will make use of the estimate \eqref{eq:32} and separately treat long and short
	variations.
	
	\noindent
	{\textbf{Long variations:}}
	In order to estimate long variations we shall use Theorem \ref{thm:6}. Indeed, for $r>2$
	\begin{multline*}
		\big\|V_r^L\big(M_Nf: N\in\NN\big)\big\|_{\ell^2}
		=\big\|V_r\big(M_{2^j}f: j\ge0\big)\big\|_{\ell^2}\\
		\le
		\sum_{s\ge0}
		\big\|V_r\big(\mathcal{F}^{-1}\big(\nu_{2^j}^s\hat{f}\big): j\ge 0\big) \big\|_{\ell^2}
		+\Big\|V_r\Big(M_{2^j}f
		-\sum_{s\ge0}
		\mathcal{F}^{-1}\big(\nu_{2^j}^s\hat{f}\big): j\ge 0\Big)\Big\|_{\ell^2}\\
		\lesssim
		\sum_{s\ge0}
		\big\|V_r\big(\mathcal{F}^{-1}\big(\nu_{2^j}^s\hat{f}\big): j\ge 0\big)\big\|_{\ell^2}
		+\bigg(
		\sum_{j\ge0}\big\|M_{2^j}f
		-\sum_{s\ge0}\mathcal{F}^{-1}\big(\nu_{2^j}^s\hat{f}\big)\big\|^2_{\ell^2}
		\bigg)^{1/2}\\
		\lesssim \sum_{s\ge0}2^{-s\delta_2}\|f\|_{\ell^2}+\sum_{j\ge0}2^{-j\delta_1}\|f\|_{\ell^2}
	\end{multline*}
	where the penultimate inequality follows from \eqref{eq:18} and the last inequality
	is guaranteed by Theorem \ref{thm:6} and Lemma \ref{lem:4}.

	\noindent
	{\textbf{Short variations:}}
	Let us define the Fourier projection $\Pi_Q$ onto a set $Q\subseteq\TT^d$ by setting
	$$
	\Pi_Q f= \calF^{-1}\big(\ind{Q} \hat{f}\big)
	$$
	and observe that according to the definition of short variations we have
	\begin{multline}
		\label{eq:36}
		\big\|V_r^S\big(M_N f: N\in\NN\big)\big\|_{\ell^2}^2
		\lesssim
		\sum_{n \ge 0}
		\big\|
		V_2\big(M_N\big(\Pi_{\mathfrak{M}_{2^n}} f\big) : 2^n \leq N < 2^{n+1}\big)
		\big\|_{\ell^2}^2\\
		+\sum_{n \ge 0}
		\big\|
		V_2\big(M_N\big(\Pi_{\mathfrak{m}_{2^n}} f\big): 2^n \leq N < 2^{n+1}\big)
		\big\|_{\ell^2}^2
	\end{multline}
	for major $\mathfrak{M}_{2^j}$ and minor $\mathfrak{m}_{2^j}$ arcs defined in Section
	\ref{sec3}. The proof of Theorem \ref{th:7} will be completed if we show that the sums in
	\eqref{eq:36} can be dominated by $\|f\|^2_{\ell^2}$. Applying \eqref{eq:39} we get the desired
	bound for the second sum in \eqref{eq:36}. Indeed, by Plancherel's theorem and  Weyl's inequality 
    \cite{SW0},  we have
	$$
	\big\lVert M_N\big(\Pi_{\mathfrak{m}_{2^n}} f\big) \big\rVert_{\ell^2}
	\leq
	\sup_{\xi \in \mathfrak{m}_{2^n}} \lvert m_N(\xi) \rvert \cdot
	\big\lVert \Pi_{\mathfrak{m}_{2^n}} f \big\rVert_{\ell^2}
	\lesssim
	2^{-n\delta' }
	\big\lVert \Pi_{\mathfrak{m}_{2^n}} f \big\rVert_{\ell^2}
	$$
	and
	$$
	\big\lVert
	M_{N+1}\big(\Pi_{\mathfrak{m}_{2^n}} f\big) - M_{N}\big(\Pi_{\mathfrak{m}_{2^n}} f\big)
	\big\rVert_{\ell^2}
	\lesssim 2^{-n}
	\big\lVert \Pi_{\mathfrak{m}_{2^n}} f \big\rVert_{\ell^2}.
	$$
	Therefore, using inequality \eqref{eq:39} we obtain
	$$
	\sum_{n \ge 0}
	\big\|V_2\big(M_N(\Pi_{\mathfrak{m}_{2^n}}f): 2^n \leq N < 2^{n+1}\big)\big\|_{\ell^2}^2
	\lesssim
	\sum_{n \ge 0}
	2^{- n \delta'}
	\big\lVert \Pi_{\mathfrak{m}_{2^n}}f \big\rVert_{\ell^2}^2
	\lesssim
	\lVert f \rVert_{\ell^2}^2.
	$$
	To deal with the first sum in \eqref{eq:36} we need to have a more subtle decomposition of
	the family of major arcs. For $u \geq -\beta n$ let us define
	$$
	\mathfrak{N}_{2^n}^u(a/q) = \mathfrak{M}_{2^n}^u(a/q) \setminus \mathfrak{M}_{2^n}^{u+1}(a/q)
	$$
	where
	$
	\mathfrak{M}_{2^n}^u(a/q)
	=
	\big\{\xi \in \TT^d :
		\lvert
		\xi_\gamma - a_\gamma / q
		\rvert
		\leq
		2^{-n\abs{\gamma} - u}
		\text{ for all } \gamma \in \Gamma
	\big\}.
	$
	Setting
	$$
	\mathfrak{N}_{2^n, s}^u
	=
	\bigcup_{a/q \in  \mathscr{R}_s}
	\mathfrak{N}_{2^n}^u(a/q).
	$$
	we may write
	\begin{align}
		\label{eq:35}
		\mathfrak{M}_{2^n}
		=\bigcup_{0 \leq s \leq \alpha n - 1}
		\bigcup_{a/q \in \mathscr{R}_s}
		\mathfrak{M}_{2^n}(a/q)
		=\bigcup_{0 \leq s \leq \alpha n - 1}
		\bigcup_{u \ge - \beta n}
		\mathfrak{N}_{2^n, s}^u.
	\end{align}
	Let $2^n \leq N < 2^{n+1}$. If $\xi \in \mathfrak{M}_{2^n} \cap \mathfrak{N}_{2^n}^u(a/q)$ for
	$a/q \in \mathscr{R}_s$ then for all $\gamma \in \Gamma$
	$$
	\Big\lvert \xi_\gamma - \frac{a_\gamma}{q} \Big\rvert
	\leq 2^d N^{-\abs{\gamma} + \beta}.
	$$
	Thus, by Proposition \ref{prop:1} and \eqref{eq:8} together with \eqref{eq:9} and
	\eqref{eq:10}, we get
	\begin{equation}
		\label{eq:40}
		\big\lvert m_N(\xi) - m_{2^n}(\xi) \big\rvert
		\lesssim
		2^{-n/4} + 2^{-\delta s} \big\lvert \Phi_N(\xi-a/q) - \Phi_{2^n}(\xi - a/q)\big\rvert
		\lesssim
		2^{-\delta s} \big(2^{-\abs{u}/d} + 2^{-n/8}\big)
	\end{equation}
	provided that $8 \alpha \delta \leq 1$. Next, we set
	\begin{align*}
		\widetilde{\mathfrak{N}}_{2^n, s}
		=
		\bigcup_{u > d n /8}
		\mathfrak{N}_{2^n, s}^u
	\end{align*}
	and observe that, by \eqref{eq:35}, we get
	\begin{multline*}
		\big\lVert
		V_r\big(
		M_N\big(\Pi_{\mathfrak{M}_{2^n}}f\big) : 2^n \leq N < 2^{n+1}
		\big)
		\big\rVert_{\ell^2}
		=
		\big\lVert
		V_r\big(\calF^{-1} \big((m_N - m_{2^n}) \ind{\mathfrak{M}_{2^n}} \hat{f}\big)
		: 2^n \leq N < 2^{n+1}\big) \big\rVert_{\ell^2}\\
		\lesssim
		\sum_{0 \leq s \leq \alpha n - 1}
		\sum_{-\beta n \leq u \leq d n /8}
		\big\lVert
		V_2 \big(\calF^{-1}\big((m_N - m_{2^n}) \ind{\mathfrak{N}_{2^n, s}^u} \hat{f}\big):
		2^n \leq N < 2^{n+1}\big)
		\big\rVert_{\ell^2}\\
		+
		\sum_{0 \leq s \leq \alpha n - 1}
		\big\lVert
		V_2\big(\calF^{-1}\big(
		(m_N - m_{2^n}) \ind{\widetilde{\mathfrak{N}}_{2^n, s}^u} \hat{f}
		\big): 2^n \leq N < 2^{n+1}\big)
		\big\rVert_{\ell^2}.
	\end{multline*}
	If $-\beta n \leq u \leq d n/8$ then using \eqref{eq:39} and \eqref{eq:40} we can estimate
	$$
	\big\lVert
	V_2 \big(\calF^{-1}\big((m_N - m_{2^n}) \ind{\mathfrak{N}_{2^n, s}^u} \hat{f}\big):
	2^n \leq N < 2^{n+1}\big)
	\big\rVert_{\ell^2}
	\lesssim
	2^{-\delta s/2}
	2^{-|u|/(2d)}
	\big\lVert
	\Pi_{\mathfrak{N}_{2^n, s}^u} f
	\big\rVert_{\ell^2},
	$$
	otherwise
	$$
	\big\lVert
	V_2\big(\calF^{-1}\big(
	(m_N - m_{2^n}) \ind{\widetilde{\mathfrak{N}}_{2^n, s}^u} \hat{f}
	\big): 2^n \leq N < 2^{n+1}\big)
	\big\rVert_{\ell^2}
	\lesssim
	2^{-\delta s/2}
	2^{-n/16}
	\big\lVert
	\Pi_{\widetilde{\mathfrak{N}}_{2^n, s}^u} f
	\big\rVert_{\ell^2}.
	$$
	Therefore, by Cauchy--Schwarz inequality we get
	\begin{multline*}
		\big\lVert
		V_r\big(
		M_N\big(\Pi_{\mathfrak{M}_{2^n}}f\big) : 2^n \leq N < 2^{n+1}
		\big)
		\big\rVert_{\ell^2}^2
		\lesssim
		\sum_{0 \leq s \leq \alpha n - 1}
		2^{-\delta s/2}
		\sum_{u \geq -\beta n}
		2^{-\abs{u}/(2d)}
		\big\lVert
		\Pi_{\mathfrak{N}_{2^n, s}^u} f
		\big\rVert_{\ell^2}^2\\
		+
		2^{-n/8}
		\sum_{s \geq 0}
		2^{-\delta s/2}
		\big\lVert
		\Pi_{\widetilde{\mathfrak{N}}_{2^n, s}} f
		\big\rVert_{\ell^2}^2.
	\end{multline*}
	Next, we have
	\begin{multline*}
		\sum_{n \geq 0}
		\sum_{0 \leq s \leq \alpha n - 1}
		2^{-\delta s/2}
		\sum_{u \geq -\beta n}
		2^{-\abs{u}/(2d)}
		\big\lVert
		\Pi_{\mathfrak{N}_{2^n, s}^u} f
		\big\rVert_{\ell^2}^2
		\lesssim
		\sum_{s \geq 0}
		2^{-\delta s/2}
		\sum_{n \geq s / \alpha}
		\sum_{u \geq -\beta n}
		2^{-\abs{u}/(2d)}
		\big\lVert
		\Pi_{\mathfrak{N}_{2^n, s}^u} f
		\big\rVert_{\ell^2}^2\\
		\lesssim
		\sum_{s \geq 0}
		2^{-\delta s/2}
		\sum_{u \in \ZZ}
		2^{-\abs{u}/(2d)}
		\sum_{n \geq \max\{s/\alpha, -u/\beta\}}
		\big\lVert
		\Pi_{\mathfrak{N}_{2^n, s}^u} f
		\big\rVert_{\ell^2}^2
		\lesssim
		\lVert f \rVert_{\ell^2}^2
	\end{multline*}
	where the last inequality follows since $\mathfrak{N}_{2^n, s}^u$ are disjoint for $n \geq \max\{s/\alpha, -u/\beta\}$ while $u\in\ZZ$ and $s\in\NN_0$ are fixed. Hence,
	$$
	\sum_{n \geq 0}
	\big\lVert
	V_r\big(
	M_N\big(\Pi_{\mathfrak{M}_{2^n}}f\big) : 2^n \leq N < 2^{n+1}
	\big)
	\big\rVert_{\ell^2}^2
	\lesssim
	\lVert f \rVert_{\ell^2}^2
	+
	\sum_{n \geq 0}
	2^{-n/8}
	\lVert f \rVert_{\ell^2}^2
	\lesssim
	\lVert f \rVert_{\ell^2}^2.
	$$
	This provides the bound for the second sum in \eqref{eq:36} and completes the proof of Theorem
	\ref{th:7} for $p=2$ and $r > 2$.
\end{proof}

\begin{bibliography}{discrete}
	\bibliographystyle{amsplain}
\end{bibliography}

\end{document}